\documentclass[11pt,english,oneside]{amsart}

\usepackage[breaklinks,colorlinks,linkcolor=blue,citecolor=blue,urlcolor=black]{hyperref}
\usepackage[T1]{fontenc}
\allowdisplaybreaks[4]
\usepackage{geometry}
\usepackage{indentfirst}
\usepackage{amssymb}
\usepackage{color}
\usepackage{graphicx}
\usepackage{subfigure}
\usepackage{enumerate}
\usepackage{float}

\usepackage{booktabs}
\usepackage{array, caption, threeparttable}
\usepackage[font=small,labelfont=bf,labelsep=none]{caption}
\usepackage{graphicx,color}
\usepackage{amsmath, amssymb, graphics}
\usepackage{graphicx}
\usepackage{epstopdf}

\numberwithin{figure}{section}

\makeatletter
\numberwithin{equation}{section} %% Comment out for sequentially-numbered
\numberwithin{figure}{section} %% Comment out for sequentially-numbered
\@ifundefined{theoremstyle}{\usepackage{amsthm}}{}
\theoremstyle{plain}
\newtheorem{theorem}{Theorem}[section]

\newtheorem{proposition}[theorem]{Proposition}

\newtheorem{lemma}[theorem]{Lemma}
\newtheorem{definition}[theorem]{Definition}

\usepackage{geometry}

\newcommand{\R}{\mathbb{R}}

\newcommand{\Z}{\mathbb{Z}}

\def\e{\hbox{\rm e}}

\smallskip
\def\<{{\langle }}
\def\>{{\rangle }}

\makeatother

\usepackage{babel}

\def\e{\hbox{\rm e}}

\smallskip
\def\<{{\langle }}
\def\>{{\rangle }}

\makeatother

\usepackage[mathscr]{eucal}
\usepackage{amssymb}
\usepackage{latexsym}
\usepackage{amsthm}

\makeatletter
\@addtoreset{equation}{section}

\usepackage{color}

\usepackage{graphicx,color}

\title[Construction of compact embedded $\lambda$-hypersurfaces via isoparametric hypersurfaces]
{Construction of compact embedded $\lambda$-hypersurfaces via isoparametric hypersurfaces}
\author{Junqi Lai and  Guoxin Wei}
\address{Junqi Lai \\  School of Mathematical Sciences, South China Normal University,
510631, Guangzhou,  China, 2019021668@m.scnu.edu.cn}
\address{Guoxin Wei \\  School of Mathematical Sciences, South China Normal University,
510631, Guangzhou,  China, weiguoxin@tsinghua.org.cn}

\begin{document}

	\maketitle
	
	\begin{abstract} 
		We construct closed embedded \(\lambda\)-hypersurfaces in \(\mathbb{R}^{n+1}\) of topological type \(S^1\times M\) for any \(\lambda\ge 0\), where \(M\subset\mathbb{S}^n\) is an isoparametric hypersurface with equal principal curvature multiplicities. This extends previous results of Angenent, McGrath, and Riedler (for \(\lambda=0\)) as well as Cheng–Wei and Ross (for \(\lambda>0\)), and in particular completes the missing case of Riedler’s construction.
	\end{abstract}
	
	\footnotetext{ }
	
	\section{Introduction}
	
	A hypersurface \(\Sigma^n \subset \R^{n+1}\) is called a \(\lambda\)-hypersurface if it satisfies
	\begin{equation}\label{eq:2-27-1}
		H+\langle X,\nu\rangle=\lambda,
	\end{equation}
	where \(\lambda\) is a constant, \(X\) is the position vector, \(\nu\) is a unit normal vector and \(H\) is the mean curvature.
	If $\lambda=0$, then $X:\Sigma^n\to  \mathbb{R}^{n+1}$ is a self-shrinker of  mean curvature flow, which plays an important role for study on singularities of the mean curvature flow.

	The notation of $\lambda$-hypersurfaces were first introduced by Cheng and Wei in \cite{CW} (see also \cite{MR}). Cheng and Wei \cite{CW} proved that $\lambda$-hypersurfaces are critical points of the weighted area functional with respect to weighted
	volume-preserving variations. The equation of $\lambda$-hypersurfaces also arises in the study of isoperimetric problems in Gaussian Euclidean spaces, which is a long-standing topic studied in various fields in science. $\lambda$-hypersurfaces can also be viewed as stationary solutions to the isoperimetric problem in the Gaussian space.

	For \(\lambda=0\), Angenent~\cite{A} constructed an \(S^1\times S^{n-1}\) embedding of self-shrinker in \(\mathbb{R}^{n+1}\). 
	McGrath~\cite{M} constructed an \(S^1\times S^{n}\times S^n\) embedding of self-shrinker in \(\mathbb{R}^{2n+2}\). 
	Recently, Riedler~\cite{R1} constructed an \(S^1\times M\) embedding of self-shrinker in \(\mathbb{R}^{n+1}\) for an isoparametric hypersurface \(M\) in \(\mathbb{S}^n\) whose multiplicities of principal curvatures are equal, thereby generalizing the results of Angenent and McGrath.
	It is natural to ask whether these results can be generalized to all \(\lambda\in \mathbb{R}\).
	Cheng and Wei~\cite{CW1} extended Angenent's results to \(\lambda>0\); Ross~\cite{R} extended McGrath's results to the same case. Riedler's result, however, remains to be extended.
	In this paper, we successfully provide such an extension.

	Following the approach of \cite{R1}, we can prove:
	\begin{theorem}\label{thm:2-27-1}
		Let \(\lambda \ge0\), then for any isoparametric hypersurface \( M \) in \( \mathbb{S}^n \), \( n \geq 2 \), for which the multiplicities \( m_1 \) and \( m_2 \) of the principal curvatures agree, there is a closed embedded \(\lambda\)-hypersurface of topological type \( S^1 \times M \) in \( \mathbb{R}^{n+1} \). This hypersurface is a union of homothetic copies of the leaves of the isoparametric foliation of \( \mathbb{S}^n \) associated to \( M \).
	\end{theorem}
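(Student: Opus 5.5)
Following Riedler's approach, we reduce the equation to an ODE for a curve in a two-dimensional orbit space and then find a closed embedded solution by a shooting argument.

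\textbf{Reduction to a planar curve.} Let $M$ be isoparametric in $\mathbb{S}^n$ with $g$ distinct principal curvatures and equal multiplicities $m_1=m_2=m$, so $n-1=gm$. Write the associated foliation as $M_\theta$, $\theta\in(0,\pi/g)$, with focal submanifolds at the endpoints. We look for hypersurfaces of the form
\[
\Sigma=\{\,r\,p : (r,\theta)\in\gamma,\ p\in M_\theta\,\},
\]
where $\gamma(s)=(r(s),\theta(s))$ is a curve in the half-strip $\{r>0,\ 0<\theta<\pi/g\}$. This is a union of homothetic copies of leaves, as the theorem requires. The principal curvatures of $M_\theta$ are $\cot(\theta+k\pi/g)$ with multiplicity $m$, and their sum is $-gm\cot(g\theta)$ up to sign convention. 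Hence the $\lambda$-hypersurface equation \eqref{eq:2-27-1} becomes an ODE for $\gamma$: its geodesic curvature in a suitable conformal metric equals
\[
\frac{(n-1)\,\text{(angular term)}}{r}+\text{(weight term from } \langle X,\nu\rangle) -\lambda .
\]
Concretely, with $\gamma$ parametrized by arclength and tangent angle $\alpha$, one gets
\[
\alpha'=\Big(\tfrac{r}{2}\ \text{or}\ r\Big)\sin(\cdot)-\frac{gm\cot(g\theta)\cos\alpha}{r}+\frac{\ldots}{r}+\lambda .
\]
The change of variable $\phi=g\theta$ turns the angular factor into $\cot\phi$ with coefficient $gm$. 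This is exactly the Angenent/Cheng--Wei profile equation for $S^1\times S^{k}$-type rotations, but with modified dimensional constants, since equal multiplicities make the equation symmetric under $\phi\mapsto\pi-\phi$. I would first verify that a closed embedded solution of this ODE yields a smooth closed embedded $\Sigma\cong S^1\times M$.

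\textbf{Shooting.} The symmetry $\theta\mapsto\pi/g-\theta$ reflects the strip about the line $\theta=\pi/(2g)$, and the ODE is invariant under this reflection because $m_1=m_2$. I would shoot from that symmetry line: take initial data $(r_0,\pi/(2g))$ with $\gamma$ perpendicular to the line, and vary $r_0$. It then suffices to find $r_0$ such that $\gamma$ hits the line $\theta=\pi/(2g)$ perpendicularly a second time, or such that it hits the $r$-axis-type ray perpendicularly, giving a second reflection symmetry. The resulting doubled curve is closed and symmetric.

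I would establish two asymptotic regimes, following Cheng--Wei's $\lambda>0$ adaptation of Angenent:
\begin{enumerate}[(i)]
\item For $r_0$ near the "sphere/cylinder" equilibrium, the curve is close to the fixed-point solution. The sphere $r=\text{const}$ solving $\frac{n}{r}-r=\lambda$ gives one side, and linearization shows that the first return occurs with the curve bending in one direction.
\item For $r_0\to 0$ or $r_0\to\infty$, the curve first approximates the solution for the focal submanifold cone or a straight line, and the behaviour at its first turning point has the opposite sign.
\end{enumerate}
Continuity in $r_0$ then gives an intermediate $r_0$ with the desired perpendicular hit. Embeddedness follows by showing that the quarter arc is a graph over an appropriate direction, using monotonicity of $\alpha$ or $\theta$ on that arc.

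\textbf{Main obstacle.} The hardest step is the $\lambda>0$ version of the limiting analysis in (ii), together with embeddedness. The $\lambda$ term breaks the scaling used by Riedler, so the comparison solutions (a limiting ODE near $r=0$ after rescaling, and a large-$r$ limit where $r\sin$ dominates) must be redone. The positive sign of $\lambda$ should only add curvature in the helpful direction, as it does in Cheng--Wei and Ross. I would first check sign conditions on $\alpha'$ along the arc, since they give monotonicity. If these hold uniformly for all $\lambda\ge0$, both the continuity argument and embeddedness go through exactly as in \cite{R1}.
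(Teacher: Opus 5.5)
\textbf{Where the gaps are.} Your overall architecture is the paper's (and Riedler's). You shoot perpendicularly from the middle leaf $\theta=\theta^*$, which is a reflection line precisely because $m_1=m_2$ (Proposition \ref{prop:2-27-1}). You vary the radius, and close up by reflection once the trajectory returns perpendicularly to $\theta=\theta^*$, with embeddedness coming from monotonicity of $\xi$ on the half-arc. (The alternative ``second reflection symmetry'' is not available: $\theta\mapsto\frac{\pi}{2}-\theta$ is the only symmetry of the reduced system.)

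The first genuine gap is the step ``continuity in $r_0$ then gives an intermediate $r_0$''. Two sets of parameters are disjoint and open: those for which the trajectory returns to $\theta^*$ strictly before $\xi'$ vanishes, and those for which $\xi'$ vanishes strictly before the return. Connectedness therefore only yields a parameter lying in neither set. Such a parameter is either the one you want (both events at once, i.e.\ a perpendicular hit) or of a third kind: $\xi'<0$ and $\theta>\theta^*$ for all $t>0$, with the profile curve running into the focal submanifold $\theta\to\frac{\pi}{2}$. Nothing in your sketch excludes this. The paper handles it by defining $\xi_0^*$ as the infimum of the ray of type-1 parameters and proving that $\xi_0^*$ is not type 3 (Lemmas \ref{lem:10-27-9}, \ref{lem:10-27-10}). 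The argument runs as follows.
A type-3 trajectory has $\theta'>0$ and $\theta\to\frac{\pi}{2}$, and eventually $\xi$ stays a definite amount below $S_\lambda$. Hence nearby type-1 trajectories turn arbitrarily close to $\theta=\frac{\pi}{2}$. Integrating the lower-graph equation \eqref{eq:10-25-2} then makes $\frac{\mathrm d\xi}{\mathrm d\theta}$ arbitrarily small at $x=\frac12(\theta^*+\frac{\pi}{2})$. Since $\frac{\mathrm d^2\xi}{\mathrm d\theta^2}>\delta$ on $(\theta^*,x)$, this forces $\frac{\mathrm d\xi}{\mathrm d\theta}$ to vanish before $\theta^*$ is reached, contradicting type 1.

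The second gap is your regime (i), which is not a linearization about an equilibrium. In the orbit space the sphere $\frac{n}{r}-r=\lambda$ is the trajectory $\xi\equiv S_\lambda$, $\alpha\equiv\frac{\pi}{2}$. It runs from the middle leaf straight into the focal boundary $\theta=\frac{\pi}{2}$, where $\cot\theta$ and $\tan\theta$ blow up. So it is the near-sphere regime, not the large-radius one, in which trajectories approach a focal submanifold. For $\xi_0=S_\lambda+\varepsilon$ the turning point has $\theta(T_1)\to\frac{\pi}{2}$, and the time spent near the singular boundary is unbounded as $\varepsilon\to0$. Continuous dependence on compacta, or linearization along the sphere, therefore says nothing about how the trajectory comes back.

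This is where the paper does its real work, in three steps:
\begin{enumerate}[(a)]
\item The asymptotics $(\frac{\pi}{2}-\theta_1)^{m_2}\asymp\varepsilon$ and the blow-up of the ratio $q$ give $\xi(T_1)\le S_\lambda$ (Lemma \ref{lem:10-27-6}).
\item Then $\xi(T_2)\le S_\lambda-(2^{1/(2m_2)}-1)(\frac{\pi}{2}-\theta_2)+O((\frac{\pi}{2}-\theta_2)^2)$ (Lemma \ref{lem:10-27-7}).
\item Finally a lower-graph argument shows that $\frac{\mathrm d\xi}{\mathrm d\theta}$ vanishes before $\theta^*$ is reached, i.e.\ these parameters are not type 1 (Lemma \ref{lem:10-27-8}); the case $m_2=1$ needs a separate and more delicate treatment.
\end{enumerate}
By contrast, the large-radius regime is an elementary fast-turning estimate (Lemmas \ref{lem:10-26-3}--\ref{lem:10-27-3}). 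Without these two ingredients, the exclusion of the third alternative and the singular analysis near the focal boundary, the shooting argument does not close.
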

	% \begin{remark}
	% 	This theorem can be viewed as a generalization of theorem 1.1 in \cite{CW1} and theorem 1 in \cite{R} since the parallel \((n-1)\)-spheres in \(\mathbb{S}^n\) are isoparametric with one distinct principal curvature and the Clifford tori \(\mathbb{S}^{n/2}(\rho)\times \mathbb{S}^{n/2}(\sqrt{1-\rho^2})\) in \(\mathbb{S}^n\) are isoparametric with two distinct principal curvatures.
	% \end{remark}

%%%%%%%%%%%%%%%%%%%%%%%%%%%%%%%%%%%%%%%%%%%%%%%%%%%%%%%%%%%%%%%%%%%%%%%%%%%%%%%%%%%%%%%%%%%%%%%%%%%%%%%%%%%%%%%%%%%%%%%%%%%%%%%%%%%%%%%%%%%%%%%%%%%%%%%%%%%%%%%%%%%%%%%%%%%%%%%%%%

	\section{Preliminaries}\label{prel}
	Let \(M^{n-1}\) be a isoparametric hypersurface in the unit sphere \(\mathbb{S}^n \subset\R^{n+1}\),
	\(N\) be a unit normal vector field of \(M\).
	It is well known that the principal curvatures of \(M\) are constant.
	Let \(g\) be the number of distinct principal curvatures of \(M\)
	and \(\cot \varphi_1  \ge \cot \varphi_2 \ge\dots \ge \cot \varphi_{n-1}\) be principal curvatures of \(M\), where \(\varphi_1,\dots ,\varphi_{n-1} \in (0,\pi)\).
	Let \(I\) be an interval and \((x(s),y(s))=(r(s)\cos\varphi(s),r(s)\sin\varphi(s)),\ s \in I\) be a smooth arc-length curve, where \((r(s),\varphi(s))\in(0,\infty)\times(\varphi_1-\frac{\pi}{g},\varphi_1)\).
	Define an immersion \(F\) from \(M\times I\) into \(\R^{n+1}\) by
	\[
	F(p,s)=p\,x(s)+N(p)\,y(s),\ \ \forall (p,s) \in M\times I.
	\]

	One readily checks that 
	\[
	\nu(p,s)=-p\,y'(s)+N(p)\, x'(s)
	\]
	is a unit normal vector field of \(F\) and the principal curvatures of \(F\) are 
	\[
	\begin{aligned}
		&\kappa_i=\frac{x'\cot \varphi_i+y'}{x-y\cot \varphi_i}=\frac{r'}{r}\cot(\varphi_i-\varphi)+\varphi',\ \ i=1,\dots,n-1,\\
		&\kappa_n=x'y''-y'x''.\\
	\end{aligned}
	\]
	Let \(\vartheta\) denote the angles between tangent vectors of the curve \((x(s),y(s))\) and \(x\)-axis and \(\alpha=\vartheta-\varphi\), then
	\begin{equation}\label{eq:2-27-2}
		r'=\cos \alpha,\ \  \varphi'=\frac{1}{r}\sin \alpha
	\end{equation} 
	and 
	\[
	\kappa_n=\vartheta'=\alpha'+\varphi'.
	\]
	 
	To proceed, let us note that all distinct principal curvatures of \(M\) are of the form \( \cot \varphi_1,\,\cot(\varphi_1+\frac{\pi}{g}),\,\dots,\cot(\varphi_1+\frac{(g-1)\pi}{g})\) and their multiplicities repeat every two consecutive terms in the sequence, and further, all multiplicities are equal when \( g \) is odd. These fundamental results are proved on page~108 of \cite{Ce}.
	Let \(m_2\) denotes the multiplicities of the largest principal curvature of \(M\) and \(m_1\) denotes that of the second largest (if \(g=1\) then let \(m_1=m_2\)), then the mean curvature of \(F\) is 
	\[
    H=\frac{r'}{r}\frac{g }{2}\left( m_2 \cot\left( \frac{g}{2}(\varphi_1-\varphi) \right) -m_1 \tan\left( \frac{g}{2}(\varphi_1-\varphi) \right)\right)+n\,\varphi'+\alpha'.
	\]

	If \(F\) is a \(\lambda\)-hypersurface, then one concludes from \eqref{eq:2-27-1} that 
	\begin{equation}\label{eq:2-27-3}
		\alpha'=\left(r-\frac{n}{r}\right)\sin \alpha -\frac{\cos \alpha}{r}\frac{g }{2}\left( m_2 \cot\left( \frac{g}{2}(\varphi_1-\varphi) \right) -m_1 \tan\left( \frac{g}{2}(\varphi_1-\varphi) \right)\right)+\lambda.
	\end{equation}
	Let \(\phi=\frac{\pi}{g}-\varphi_1+\varphi\), then from \eqref{eq:2-27-2} and \eqref{eq:2-27-3} one sees that \(r,\ \phi,\ \alpha\) satisfy the following system:
	\[
		\left\lbrace
    	\begin{aligned}
    	r'&= \cos \alpha, \\
    	\phi'&=\frac{1}{r}\sin \alpha, \\
    	\alpha'&=\sin\alpha\left(r-\frac{n}{r}\right)+\frac{g}{2}\frac{\cos\alpha}{r}\left(m_1\cot\left(\frac{g}{2}\phi\right)-m_2\tan\left(\frac{g}{2}\phi\right)\right)+\lambda.
    	\end{aligned}
    	\right.
	\]
	Up to a reparametrization of the argument, this system is equivalent to the following:
	\begin{equation*}
    	\left\lbrace
    	\begin{aligned}
    	r'&= r\cos \alpha, \\
    	\phi'&=\sin \alpha, \\
    	\alpha'&=\sin\alpha(r^2 -n)+\frac{g}{2}\cos\alpha\left(m_1\cot\left(\frac{g}{2}\phi\right)-m_2\tan\left(\frac{g}{2}\phi\right)\right)+\lambda\,r.
    	\end{aligned}
    	\right.
    \end{equation*}
    Let $\theta=\frac{g}{2}\phi$, $\xi=\frac{g}{2}\ln\left(\sqrt{\frac{2}{g}}r \right) $, then $r=\sqrt{\frac{g}{2}}\,\e^{\frac{2}{g}\xi}$ and 
    \begin{equation*}
    \left\lbrace
    \begin{aligned}
    \xi'&= \frac{g}{2}\cos\alpha, \\
    \theta'&= \frac{g}{2}\sin\alpha, \\
    \alpha'&=\frac{g}{2}\left[ \sin\alpha\,(\e^{\frac{4}{g}\xi} -m) + \cos\alpha\,(m_1\cot\theta - m_2\tan\theta)+\sqrt{\frac{2}{g}}\,\lambda\,\e^{\frac{2}{g}\xi}\right] ,
    \end{aligned}
    \right.
    \end{equation*}
	
    where $m=\frac{2}{g}\,n$.
    As before, this system is equivalent to the following:
     \begin{equation}\label{eq:10-25-0}
    \left\lbrace
    \begin{aligned}
    \xi'&= \sin 2\theta\cos\alpha, \\
    \theta'&= \sin 2\theta\sin\alpha, \\
     \alpha'&= \sin 2\theta\sin\alpha\,(\e^{\frac{4}{g}\xi} -m) + 2\cos\alpha\,(m_1\cos^2\theta - m_2\sin^2\theta)+\sqrt{\frac{2}{g}}\,\lambda\,\sin 2\theta\,\e^{\frac{2}{g}\xi}. 
    \end{aligned}
    \right.
    \end{equation}

	\noindent
	When ${\theta}'>0$ the profile curve $(\xi(t),\theta(t))$ can be written in the form $(\xi(\theta), \theta)$, by (\ref{eq:10-25-0}), the function $\xi(\theta)$ satisfies the differential equation
	\begin{equation}\label{eq:10-25-1}
	\begin{aligned}
		\frac{\mathrm d^2\xi}{\mathrm d\theta^2} &= -\left( 1 + \left( \frac{\mathrm d\xi}{\mathrm d\theta}\right) ^2\right) \Bigg( \e^{\frac{4}{g}\xi} -m + (m_1\cot\theta - m_2\tan\theta)\frac{\mathrm d\xi}{\mathrm d\theta}  \\
	& \  \ \ \ \ \ +\lambda\sqrt{1 + \left( \frac{\mathrm d\xi}{\mathrm d\theta}\right) ^2}\sqrt{\frac{2}{g}}\,\e^{\frac{2}{g}\xi}\Bigg).
	\end{aligned}
	\end{equation}
	When ${\theta}'<0$ the profile curve $(\xi(t),\theta(t))$ can also be written in the form $(\xi(\theta), \theta)$, by (\ref{eq:10-25-0}), the function $\xi(\theta)$ in this situation satisfies the differential equation
	\begin{equation}\label{eq:10-25-2}
	\begin{aligned}
	\frac{\mathrm d^2\xi}{\mathrm d\theta^2} &= -\left( 1 + \left( \frac{\mathrm d\xi}{\mathrm d\theta}\right) ^2\right) \Bigg( \e^{\frac{4}{g}\xi} -m + (m_1\cot\theta - m_2\tan\theta)\frac{\mathrm d\xi}{\mathrm d\theta}  \\
	& \  \ \ \ \ \ -\lambda\sqrt{1 + \left( \frac{\mathrm d\xi}{\mathrm d\theta}\right) ^2}\sqrt{\frac{2}{g}}\,\e^{\frac{2}{g}\xi}\Bigg).
	\end{aligned}
	\end{equation}
	Let $ D:= \R\times(0,\frac{\pi}{2})\times \R$, we are only interested in those solutions of \eqref{eq:10-25-0} in ${D}$.
	Let us conclude this section by some elementary properties of solutions of \eqref{eq:10-25-0}, proofs are standard and are thus omitted.
	\begin{proposition}\label{prop:10-25-3}
	For solutions of \eqref{eq:10-25-0}, we have:
	\begin{enumerate}[{\rm (i)}]
		\item For any $(\xi_0,\theta_0,\alpha_0) \in \R^3$, there is a unique solutions $\gamma$ of \eqref{eq:10-25-0} with initial condition $\gamma(0)=(\xi_0,\theta_0,\alpha_0)$. 
		This solutions is smooth and has the domain of definition all of $\R$, i.e. solutions exists for all times.
		
		\item Suppose $(\xi_s,\theta_s,\alpha_s) $ converges in $\R^3$ to a point $(\xi_\infty,\theta_\infty,\alpha_\infty) $. 
		Denote by $\gamma_s$ the solutions of \eqref{eq:10-25-0} with initial condition $(\xi_s,\theta_s,\alpha_s)$ and $\gamma_\infty$ the solution of \eqref{eq:10-25-0} with initial condition $(\xi_\infty,\theta_\infty,\alpha_\infty)$.
		Then $\gamma_s$ converges uniformly on compacta to $\gamma_\infty$.
		
		\item Solutions of \eqref{eq:10-25-0} with initial condition in $D$ remains in $D$ for all times.
	\end{enumerate}
	\end{proposition}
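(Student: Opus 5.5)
The plan is to treat \eqref{eq:10-25-0} as an autonomous ODE $\gamma'=V(\gamma)$ on $\R^3$ and prove the three parts in turn: local well-posedness, global existence via a growth bound, and invariance of the slab $\theta\in(0,\frac{\pi}{2})$.

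For (i), the right-hand side $V(\xi,\theta,\alpha)$ is built from $\sin$, $\cos$ and exponentials of the variables. It is therefore real-analytic on all of $\R^3$; in particular, after clearing denominators it has no singularity at $\theta=0,\frac{\pi}{2}$. The Picard--Lindel\"of theorem then gives local existence, uniqueness and smoothness of solutions. For global existence I would bound the components in order.
\begin{enumerate}[{\rm (a)}]
\item Since $|\xi'|\le 1$, we get $|\xi(t)|\le|\xi_0|+|t|$, and $\theta$ likewise grows at most linearly.
\item Consequently, on any bounded time interval $[-T,T]$ the coefficients $\e^{\frac{4}{g}\xi}$ and $\e^{\frac{2}{g}\xi}$ are bounded by a constant $C_T$.
\item Hence $|\alpha'|\le C_T(1+m)+2(m_1+m_2)+\sqrt{2/g}\,\lambda\,C_T$, so $\alpha$ grows at most linearly on $[-T,T]$.
\end{enumerate}
The solution therefore stays in a compact set on every finite time interval. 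The standard extension (blow-up) criterion then shows the maximal interval of existence is all of $\R$.

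For (ii), this is continuous dependence on initial data for a locally Lipschitz vector field. Fix $T>0$. The a priori bounds from (i) depend only on an upper bound for $|\xi_s|,|\theta_s|,|\alpha_s|$, so all $\gamma_s$ and $\gamma_\infty$ stay in a common compact set $K_T$ on $[-T,T]$. Let $L$ be the Lipschitz constant of $V$ on $K_T$. Gr\"onwall's inequality gives
\[
\sup_{|t|\le T}|\gamma_s(t)-\gamma_\infty(t)|\le \e^{LT}\,|\gamma_s(0)-\gamma_\infty(0)|\to 0,
\]
which is uniform convergence on compacta.

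For (iii), I would use the invariant planes $\theta=0$ and $\theta=\frac{\pi}{2}$. On $\{\theta=0\}$ we have $\sin2\theta=0$, so $\theta'=0$; the system reduces to $\xi'=0$, $\theta'=0$, $\alpha'=2m_1\cos\alpha$, whose solutions stay in the plane. By uniqueness from (i), the plane $\{\theta=0\}$ is therefore invariant. The same argument applies to $\{\theta=\frac{\pi}{2}\}$, where $\alpha'=-2m_2\cos\alpha$.

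Now take a solution starting in $D$ and suppose it leaves $D$. Then, by continuity of $\theta(t)$ and the intermediate value theorem, there is a first time $t_1$ at which $\theta(t_1)\in\{0,\frac{\pi}{2}\}$. The point $\gamma(t_1)$ lies in an invariant plane, so by uniqueness the whole orbit lies in that plane. This contradicts $\theta(0)\in(0,\frac{\pi}{2})$, and the argument works for both forward and backward time.

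None of these steps is a real obstacle. The only point needing care is the observation in (iii) that the boundary planes are invariant, which relies on the factor $\sin2\theta$ in the first two equations.
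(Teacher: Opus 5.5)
Your proof is correct. The paper omits this proof as ``standard'', and your argument is the standard one it has in mind: the vector field is smooth on all of $\R^3$; the bounds $|\xi'|,|\theta'|\le 1$ control $\alpha'$ on finite intervals, giving global existence; Gr\"onwall gives continuous dependence; and for (iii) the planes $\theta=0,\frac{\pi}{2}$ are invariant (because of the factor $\sin 2\theta$), so uniqueness keeps solutions in $D$. One cosmetic fix: write $|\lambda|$ instead of $\lambda$ in your bound for $|\alpha'|$, since the system is posed for arbitrary real $\lambda$.
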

	\begin{proposition}\label{prop:2-27-1}
	Let \(m_1=m_2\)	and \((\xi(t),\theta(t),\alpha(t))\) be a solution of \eqref{eq:10-25-0} in \(D\), then \(\gamma(t)=(\xi(-t),\frac{\pi}{2}-\theta(-t),\pi-\alpha(-t))\) is also a solution of \eqref{eq:10-25-0} in \(D\).
	\end{proposition}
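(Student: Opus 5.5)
The plan is to verify directly that \(\gamma\) satisfies each equation of \eqref{eq:10-25-0}, using only trigonometric identities and the hypothesis \(m_1=m_2\). Write \(\tilde\xi(t)=\xi(-t)\), \(\tilde\theta(t)=\frac{\pi}{2}-\theta(-t)\) and \(\tilde\alpha(t)=\pi-\alpha(-t)\). All quantities on the right-hand sides will be evaluated at \(-t\). The identities needed are
\[
\sin 2\tilde\theta=\sin(\pi-2\theta)=\sin 2\theta,\qquad \cos\tilde\alpha=-\cos\alpha,\qquad \sin\tilde\alpha=\sin\alpha,
\]
\[
\cos^2\tilde\theta=\sin^2\theta,\qquad \sin^2\tilde\theta=\cos^2\theta .
\]

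For the first two equations, the chain rule gives \(\tilde\xi'(t)=-\xi'(-t)=-\sin2\theta\cos\alpha\), which equals \(\sin2\tilde\theta\cos\tilde\alpha\). Likewise \(\tilde\theta'(t)=\theta'(-t)=\sin2\theta\sin\alpha\), which equals \(\sin2\tilde\theta\sin\tilde\alpha\).

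For the third equation, \(\tilde\alpha'(t)=\alpha'(-t)\). This is the original right-hand side evaluated at \(-t\). The right-hand side of the third equation for \(\gamma\) consists of three terms:
\begin{itemize}
\item The term \(\sin 2\tilde\theta\sin\tilde\alpha\,(\e^{\frac{4}{g}\tilde\xi}-m)\) equals \(\sin2\theta\sin\alpha\,(\e^{\frac{4}{g}\xi}-m)\), since \(\tilde\xi(t)=\xi(-t)\).
\item The \(\lambda\)-term \(\sqrt{2/g}\,\lambda\sin2\tilde\theta\,\e^{\frac{2}{g}\tilde\xi}\) is likewise unchanged.
\item The middle term is \(2\cos\tilde\alpha\,(m_1\cos^2\tilde\theta-m_2\sin^2\tilde\theta)=-2\cos\alpha\,(m_1\sin^2\theta-m_2\cos^2\theta)\).
\end{itemize}
When \(m_1=m_2\), the middle term equals \(2\cos\alpha\,(m_1\cos^2\theta-m_2\sin^2\theta)\). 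Hence the three terms sum to \(\alpha'(-t)\), as required.

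Finally, \(\gamma\) lies in \(D\) because \(\theta(-t)\in(0,\frac{\pi}{2})\) implies \(\tilde\theta(t)\in(0,\frac{\pi}{2})\), while \(\tilde\xi\) and \(\tilde\alpha\) are real.

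The only real obstacle is the middle term of the \(\alpha'\) equation. It is the sole place where the hypothesis \(m_1=m_2\) enters, and the sign change of \(\cos\alpha\) must exactly compensate the interchange of \(\cos^2\theta\) and \(\sin^2\theta\). The \(\lambda\)-term causes no trouble, because time reversal combined with \(\alpha\mapsto\pi-\alpha\) leaves \(\sin2\theta\) and \(\e^{\frac{2}{g}\xi}\) invariant.
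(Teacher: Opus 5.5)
Your proof is correct and is the same direct verification the paper has in mind; the paper omits it as standard. The first two equations follow from \(\sin 2\tilde\theta=\sin 2\theta\), \(\cos\tilde\alpha=-\cos\alpha\), \(\sin\tilde\alpha=\sin\alpha\), and in the \(\alpha'\)-equation the hypothesis \(m_1=m_2\) enters only to let the sign change of \(\cos\alpha\) absorb the interchange of \(\cos^2\theta\) and \(\sin^2\theta\).
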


%%%%%%%%%%%%%%%%%%%%%%%%%%%%%%%%%%%%%%%%%%%%%%%%%%%%%%%%%%%%%%%%%%%%%%%%%%%%%%%%%%%%%%%%%%%%%%%%%%%%%%%%%%%%%%%%%%%%%%%%%%%%%%%%%%%%%%%%%%%%%%%%%%%%%%%%%%%%%%%%%%%%%%%%%%%%%%%%%%

\section{Existence of Periodic Curves}
Throughout this paper \(\arctan(\sqrt{m_1/m_2})\) will be denoted by \(\theta^*\). 
To prove theorem \ref{thm:2-27-1}, it suffices to find solutions of \eqref{eq:10-25-0} such that the components \((\xi(t),\theta(t))\) form smooth simple closed curves.
Let us begin by a definition.
\begin{definition}\label{def:10-25-4}
	For $\xi_0 \in \R$ let $(\xi_{\xi_0}(t),\theta_{\xi_0}(t),\alpha_{\xi_0}(t))$ denote the solution of \eqref{eq:10-25-0} with initial condition $(\xi(0),\theta(0),\alpha(0)=(\xi_0, \theta^*,\frac{\pi}{2})$. Then:
	\begin{enumerate}[{\rm (i)}]
		\item $\xi_0$ is said to be of type 1 if there is a $T>0$ so that $\theta_{\xi_0}(T)=\theta^*$ and $\xi_{\xi_0}'(t)\ne 0$ for all $t\in (0,T)$. See figure \ref{fig3.1}\subref{fig:3.1a} for the schematic.
		\item $\xi_0$ is said to be of type 2 if there is a $T>0$ so that $\xi_{\xi_0}'(T)= 0$ and $\theta_{\xi_0}(t)\ne\theta^*$ for all $t\in (0,T)$. See figure \ref{fig3.1}\subref{fig:3.1b} for the schematic.
		\item $\xi_0$ is said to be of type 3 if $\xi_{\xi_0}'(t) \ne 0$ and $\theta_{\xi_0}(t)\ne\theta^*$ for all $t>0$. See figure \ref{fig3.1}\subref{fig:3.1c} for the schematic.
	\end{enumerate}
\end{definition}
\noindent
	\begin{figure}[H]\centering
		\subfigure[] {	
			\includegraphics[height=3cm]{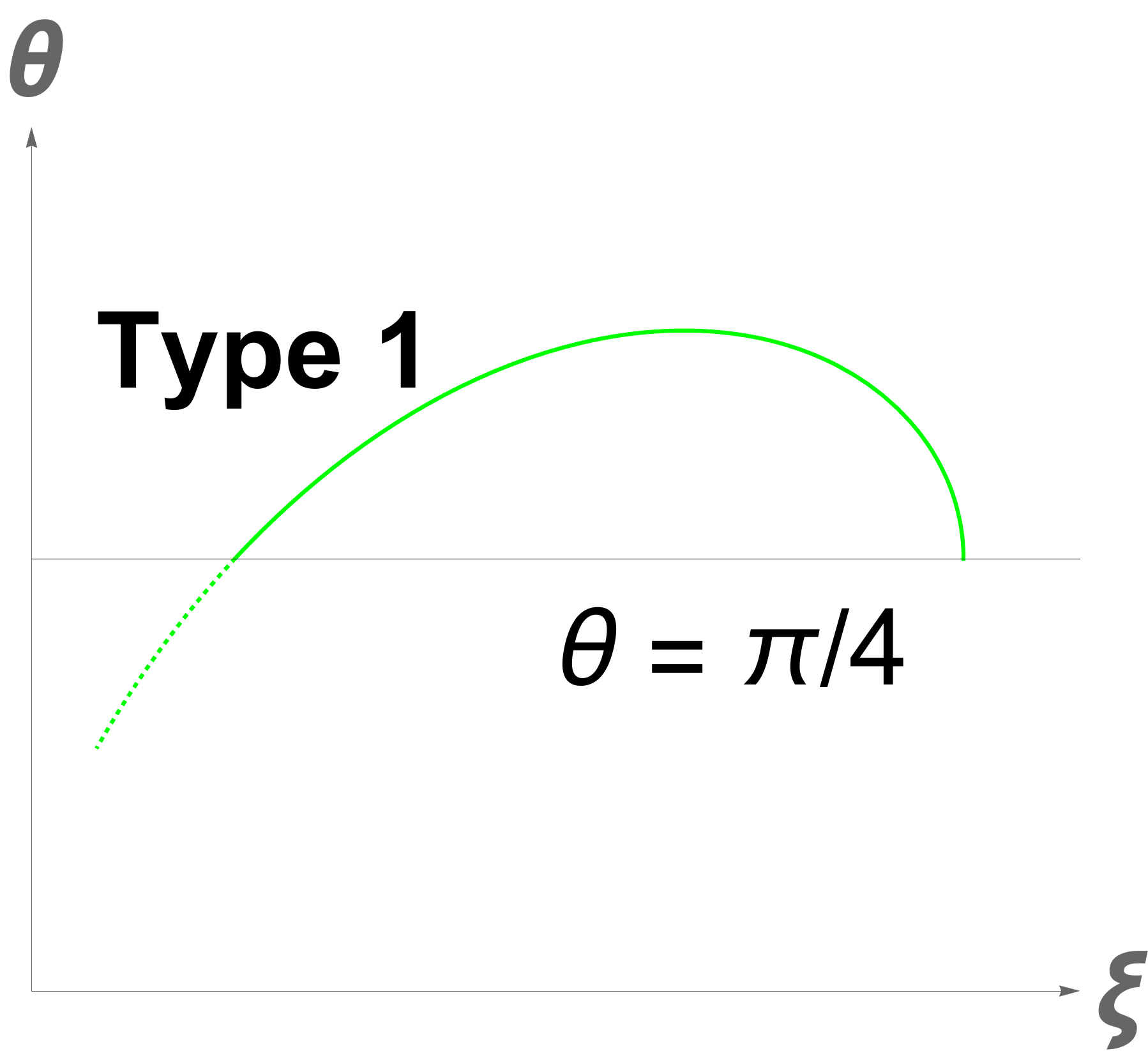}  \label{fig:3.1a}
		}\hfil
		\subfigure[] {
			\includegraphics[height=3cm]{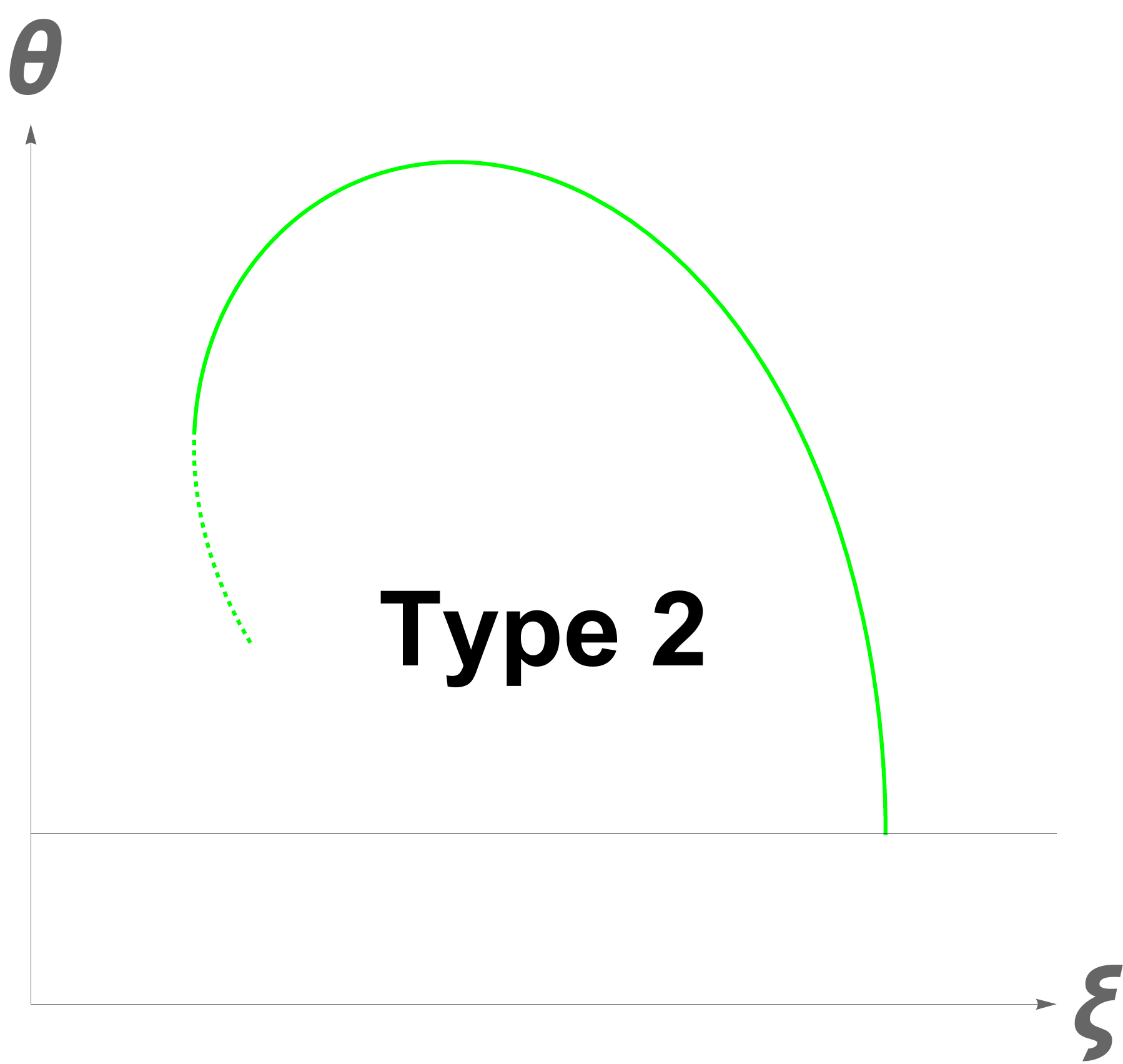}   \label{fig:3.1b}
		} \hfil
		\subfigure[] {
			\includegraphics[height=3cm]{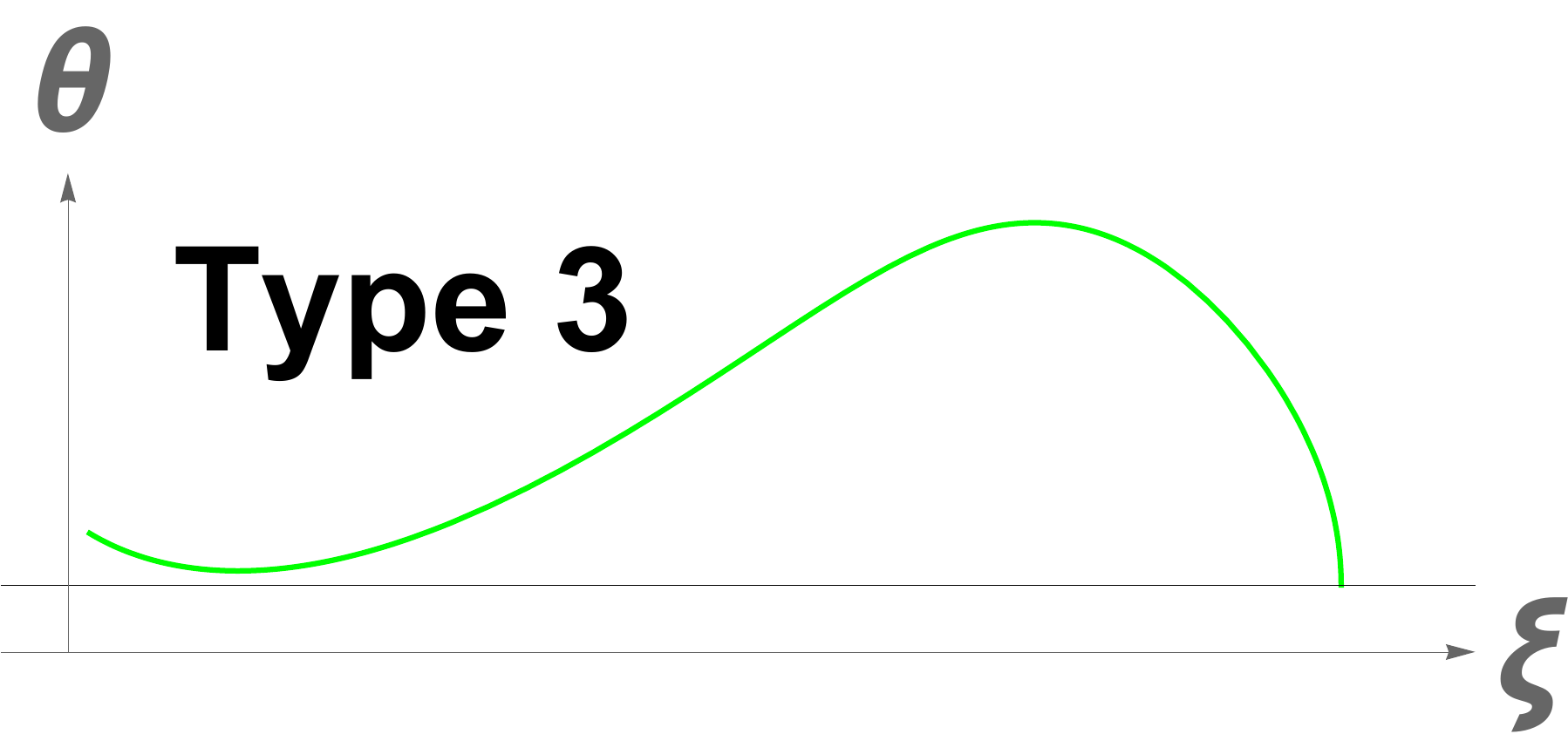}     \label{fig:3.1c}
		}
		\caption{\ Types of the profile curves.}
		\label{fig3.1}
	\end{figure}
Note that these three situations cover all possibilities.
Let $S_\lambda:=\frac{g}{2}\ln\left( \sqrt{\frac{2}{g}}\frac{-\lambda+\sqrt{\lambda^2+4n}}{2}\right) $, $S_{-\lambda}:=\frac{g}{2}\ln\left( \sqrt{\frac{2}{g}}\frac{\lambda+\sqrt{\lambda^2+4n}}{2}\right) $ and define
\[
\xi_0^* :=\inf\left\lbrace \xi\in \R\,|\,\xi_0\ \mathrm{is\ type\ 1\ for\ all}\ \xi_0>\xi\right\rbrace .
\]
\begin{proposition}\label{prop:10-25-5}
	Let $\lambda\ge0$, then we have:
	\begin{enumerate}[{\rm (i)}]
		\item $\xi_0^* <\infty$.
		\item $\xi_0^* >S_\lambda$.
		\item $\xi_0^*$ is not type 3.
	\end{enumerate}
\end{proposition}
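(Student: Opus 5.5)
The plan follows Riedler's argument for $\lambda=0$, checking at each step that the extra term $\sqrt{2/g}\,\lambda\sin 2\theta\,\e^{\frac{2}{g}\xi}$ in the $\alpha'$ equation of \eqref{eq:10-25-0} has a favourable sign or is dominated. Two preliminary facts are used throughout. First, $S_\lambda$ is exactly the value of $\xi$ at which the graph equation \eqref{eq:10-25-1} admits the constant solution $\theta\mapsto\xi$ when $\theta'>0$. Indeed, with $\xi'\equiv0$ the equation reduces to $\e^{\frac4g\xi}-m+\lambda\sqrt{2/g}\,\e^{\frac2g\xi}=0$, and in terms of $r$ this is $r^2+\lambda r-n=0$, whose positive root is $r=\frac{-\lambda+\sqrt{\lambda^2+4n}}{2}$. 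Second, at $t=0$ we have $\theta=\theta^*$ and $\alpha=\pi/2$. Hence $\cos\alpha=0$, the curve starts horizontally, and $\alpha'(0)=\sin2\theta^*\bigl(\e^{\frac4g\xi_0}-m+\sqrt{2/g}\,\lambda\,\e^{\frac2g\xi_0}\bigr)$. This quantity is positive, zero or negative according as $\xi_0>S_\lambda$, $\xi_0=S_\lambda$ or $\xi_0<S_\lambda$.

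For (i), take $\xi_0$ large and view the curve as a graph $\xi(\theta)$ over $\theta\ge\theta^*$ via \eqref{eq:10-25-1}. The bracket is dominated by $\e^{\frac4g\xi}$, and the $\lambda$ term only increases it because $\lambda\ge0$. So $\xi''\lesssim-(1+\xi'^2)\,\e^{\frac4g\xi_0}/2$ as long as $\xi$ stays near $\xi_0$ and $\theta$ stays away from $\pi/2$. A comparison/blow-up argument, exactly as in Riedler's lemma, shows that $\xi'$ reaches $-\infty$ within a $\theta$-interval of length $O(\e^{-\frac4g\xi_0})$, while $\xi$ drops by only $O(\e^{-\frac4g\xi_0})$. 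At that point $\alpha=\pi$, and the curve turns back with $\theta$ decreasing.

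We then follow it using \eqref{eq:10-25-2}, where the $\lambda$ term now enters with a minus sign. It is of lower order $\e^{\frac2g\xi}$ compared with $\e^{\frac4g\xi}$, so the curve is still strongly curved. It therefore comes back to $\theta=\theta^*$ after a rotation by less than $\pi$ in total, with $\xi$ still large and $\xi'\neq0$ throughout. An alternative is to rescale: for large $\xi_0$ the solution is close to a small circle, by Proposition \ref{prop:10-25-3}(ii) applied to the rescaled system. This gives type 1 for all $\xi_0$ larger than some bound, so $\xi_0^*<\infty$.

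For (ii), by the computation above, for $\xi_0\le S_\lambda$ we get $\alpha'(0)\le0$, so the curve initially bends toward decreasing $\xi$ (for $\xi_0=S_\lambda$ one looks at higher derivatives or uses uniqueness). I would show that such $\xi_0$ are type 2: $\alpha$ decreases from $\pi/2$ toward $0$, so $\xi$ increases, i.e. $\xi'\to0$ happens before $\theta$ returns to $\theta^*$. Actually with $\alpha$ slightly less than $\pi/2$, $\cos\alpha>0$ and $\xi$ increases, and one then argues via \eqref{eq:10-25-1} that $\xi'$ returns to $0$ before $\theta$ crosses back. Then, by continuity in $\xi_0$ (Proposition \ref{prop:10-25-3}(ii)) and openness of the type 2 condition (a transversal zero of $\xi'$), the non-type-1 values form a set containing a neighbourhood of $(-\infty,S_\lambda]$. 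Hence $\xi_0^*>S_\lambda$. The key estimate is to handle $\xi_0=S_\lambda$ itself, where the constant solution is no longer available. In this case, for $m_1=m_2$ one uses the symmetry of Proposition \ref{prop:2-27-1} together with the $\cot\theta-\tan\theta$ term to show that $\xi'$ becomes nonzero and then vanishes again in finite time.

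For (iii), suppose $\xi_0^*$ were type 3. A type 3 trajectory has $\theta$ monotone and $\xi$ monotone for all $t>0$, so it converges, and its limit must be an equilibrium of \eqref{eq:10-25-0} in $\overline D$, namely a boundary point $\theta\in\{0,\pi/2\}$ or a fixed point. Analysing the linearisation at these limits, as Riedler does, shows that approaching them is incompatible with being a limit of type 1 orbits. The $\lambda$ term vanishes at $\theta=0,\pi/2$ because of the factor $\sin2\theta$, so the boundary analysis is unchanged. More concretely, nearby type 1 orbits for $\xi_0>\xi_0^*$ converge on compacta to the type 3 orbit, and their return times $T$ tend to infinity. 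Showing this forces a contradiction, i.e. excluding that the limiting orbit spirals into an equilibrium while the approximating orbits escape and return, is the main obstacle. I would first try to adapt Riedler's monotone quantity (the angle $\alpha$ staying in $(\pi/2,\pi)$), checking that the $\lambda$ term, being nonnegative when $\theta'>0$, preserves the needed inequalities.
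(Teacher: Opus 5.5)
Your proposal has genuine gaps in all three parts. The most serious is in (ii), which rests on a false premise.

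\textbf{Part (ii).} At $\xi_0=S_\lambda$ the solution \emph{is} the constant one, $(\xi,\theta,\alpha)(t)=(S_\lambda,\arctan(\e^{2t}\tan\theta^*),\frac{\pi}{2})$. So $\xi'\equiv0$, $\alpha\equiv\frac{\pi}{2}$ and $\theta\to\frac{\pi}{2}$, and $\xi'$ never ``becomes nonzero and vanishes again''.

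Thus $S_\lambda$ is type 2 only degenerately: $\xi'$ has no transversal zero, so there is no openness to exploit at the one point where you need it. For $\xi_0=S_\lambda+\varepsilon$ the orbit shadows this line for times that become unbounded as $\varepsilon\to0$. Everything that decides its type (turning around and coming back) therefore happens arbitrarily late and arbitrarily close to the singular boundary $\theta=\frac{\pi}{2}$. There Proposition \ref{prop:10-25-3}(ii), which only gives convergence on compacta, says nothing, and the symmetry of Proposition \ref{prop:2-27-1} does not help either.

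Proving that a right neighbourhood of $S_\lambda$ consists of non-type-1 points is exactly the content of (ii). The paper does it by a quantitative analysis near $\theta=\frac{\pi}{2}$ (Lemmas \ref{lem:10-27-4}--\ref{lem:10-27-8}):
\begin{enumerate}[{\rm (a)}]
\item Assuming $S_\lambda+\varepsilon$ is type 1, the turning point satisfies $\theta_1\to\frac{\pi}{2}$.
\item Integrating the log-derivative form \eqref{eq:1-12-2} of the upper-graph equation, where the $m_2\tan\theta$ singularity produces powers $(\frac{\pi}{2}-\theta)^{m_2}$, gives $\xi(T_1)\le S_\lambda$.
\item This then gives $\xi(T_2)\le S_\lambda-(2^{1/(2m_2)}-1)(\frac{\pi}{2}-\theta_2)+O((\frac{\pi}{2}-\theta_2)^2)$.
\item Feeding this into \eqref{eq:10-25-2} forces the slope of the lower graph to vanish before $\theta^*$, treated separately for $m_2>1$ and $m_2=1$.
\end{enumerate}

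\textbf{Part (i).} The mechanism you describe is not the right one. At leading order $\dot\alpha\approx\e^{\frac{4}{g}\xi_0}\sin\alpha$ in arclength, so the rescaled limit is the grim reaper $\dot\alpha=\sin\alpha$, not a small circle. It only approaches $\alpha=\pi$ asymptotically, with $\xi$ decreasing without bound in rescaled units, and never returns to $\theta=\theta^*$. Consequences:
\begin{enumerate}[{\rm (a)}]
\item Compact convergence gives nothing about the return.
\item The leading-order comparison alone does not bound the drop of $\xi$.
\item Even reaching $\alpha=\pi$ needs the lower-order terms $2\cos\alpha(m_1\cos^2\theta-m_2\sin^2\theta)$ and the $\lambda$ term (Lemma \ref{lem:10-27-2}).
\end{enumerate}
Once $\alpha>\pi$, the leading term $\sin2\theta\sin\alpha(\e^{\frac{4}{g}\xi}-m)$ is negative and pulls $\alpha$ back toward $\pi$. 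So the curve is not strongly curved; it runs almost parallel to the $\xi$-axis. The $\lambda$ term is favourable there rather than something to be dominated. For $\lambda=0$ the return to $\theta^*$ only happens once $\xi$ has dropped to order one.

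The missing argument is that of Lemma \ref{lem:10-27-3}:
\begin{enumerate}[{\rm (a)}]
\item By Lemma \ref{lem:10-25-6}(iv), which is where $\lambda\ge0$ enters, $\alpha$ stays in $(\pi,\frac{3\pi}{2})$ while $\theta>\theta^*$.
\item By Lemma \ref{lem:10-26-2}, $\theta$ reaches $\theta^*$ unless $\alpha$ first hits $\frac{3\pi}{2}$ at some $T_3$.
\item At $T_3$, $\alpha'(T_3)\ge0$ forces $\xi(T_3)\le S_{-\lambda}$ by Lemma \ref{lem:10-25-6}(ii). So $\xi$ has fallen by about $\xi_0$ while $\theta$ stayed in $(\theta^*,\theta^*+\frac{1}{\xi_0})$.
\item But $\theta'(T_3)=-\sin2\theta(T_3)$, together with a $\xi_0$-independent bound on $\theta''$ near $T_3$ (where $\xi$ is bounded), makes $\theta$ decrease by a fixed amount just before $T_3$. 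This is a contradiction.
\end{enumerate}

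\textbf{Part (iii).} You locate the obstacle but leave it open. Two further points need attention: \eqref{eq:10-25-0} has no equilibria in $D$, since $\xi'=\theta'=0$ forces $\sin2\theta=0$; and monotonicity of $\theta$ along a type 3 orbit requires proof.

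The paper's route is as follows.
\begin{enumerate}[{\rm (a)}]
\item Lemma \ref{lem:10-27-9} shows that a type 3 orbit has $\theta'>0$, eventually satisfies $\e^{\frac{4}{g}\xi}-m+\sqrt{2/g}\,\lambda\,\e^{\frac{2}{g}\xi}<-\delta$, and has $\theta\to\frac{\pi}{2}$.
\item Consequently the type 1 orbits from $\xi_0^*+\varepsilon$ turn around at $\theta_\varepsilon(T_1)\to\frac{\pi}{2}$, with $\xi<S_\lambda-\delta$ there.
\item Integrate the log-derivative form of \eqref{eq:10-25-2} from $x=\frac{1}{2}(\theta^*+\frac{\pi}{2})$ to the turning point. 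In graph form the $\lambda$ term does not vanish, but it has the right sign since $\lambda\ge0$. This bounds the slope at $x$ by a multiple of $(\frac{\pi}{2}-\theta_\varepsilon(T_1))^{m_2}\to0$.
\item Then $\xi_{\theta\theta}>\delta$ on $(\theta^*,x)$ makes the slope negative before $\theta^*$, contradicting type 1.
\end{enumerate}
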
 
This proposition will be proved in section \ref{sec:4}.
To proceed, we give the following lemma, the proof is trivial so we omit it.
\begin{lemma}\label{lem:10-25-6}
	For $(\xi, \theta, \alpha)(t)\in D$ one has:
	\begin{enumerate}[{\rm (i)}]
		\item If $\alpha(t)\in \frac{\pi}{2}+2\pi\,\Z$, then $\alpha'(t)<0$ if and only if $\xi(t)<S_\lambda$; $\alpha'(t)>0$ if and only if $\xi(t)>S_\lambda$; $\alpha'(t)=0$ if and only if $\xi=S_\lambda$ for all time.
		\item If $\alpha(t)\in -\frac{\pi}{2}+2\pi\,\Z$, then $\alpha'(t)>0$ if and only if $\xi(t)<S_{-\lambda}$; $\alpha'(t)<0$ if and only if $\xi(t)>S_{-\lambda}$; $\alpha'(t)=0$ if and only if $\xi=S_{-\lambda}$ for all time.
		\item If $\lambda \ge 0$, $\alpha(t)\in 2\pi\,\Z$ and $\theta(t)<\theta^*$ then $\alpha'(t)>0$.
		\item If $\lambda \ge 0$, $\alpha(t)\in\pi+ 2\pi\,\Z$ and $\theta(t)>\theta^*$ then $\alpha'(t)>0$.
	\end{enumerate}
\end{lemma}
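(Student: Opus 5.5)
The plan is to evaluate the third equation of \eqref{eq:10-25-0} at the special values of $\alpha$ in each item and read off the sign. The only inputs are that $\sin 2\theta>0$ and that $\cos^2\theta,\sin^2\theta>0$ whenever $\theta\in(0,\frac{\pi}{2})$, which holds because the solution lies in $D$.

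For (i), take $\alpha\in\frac{\pi}{2}+2\pi\Z$, so $\sin\alpha=1$ and $\cos\alpha=0$. Then
\[
\alpha'=\sin 2\theta\Bigl(\e^{\frac{4}{g}\xi}-m+\sqrt{\tfrac{2}{g}}\,\lambda\,\e^{\frac{2}{g}\xi}\Bigr).
\]
Set $u=\sqrt{\frac{2}{g}}\,\e^{\frac{2}{g}\xi}$, so that $u=\sqrt{\frac{2}{g}}\,r>0$. Since $m=\frac{2}{g}n$, the bracket equals $\frac{g}{2}\bigl(u^2+\lambda u-\frac{2n}{g}\bigr)\cdot\frac{2}{g}$ after rescaling, and up to a positive factor this is the quadratic $r^2+\lambda r-n$ in $r$. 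Its unique positive root is $r=\frac{-\lambda+\sqrt{\lambda^2+4n}}{2}$, and this value of $r$ corresponds exactly to $\xi=S_\lambda$. The quadratic is increasing for $r>0$, so it is positive for $\xi>S_\lambda$ and negative for $\xi<S_\lambda$. This gives the two strict statements of (i).

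For the equality case of (i), suppose $\xi(t)=S_\lambda$ and $\alpha(t)\in\frac{\pi}{2}+2\pi\Z$. One checks directly that $\xi\equiv S_\lambda$, $\alpha\equiv\frac{\pi}{2}$, with $\theta$ solving $\theta'=\sin2\theta$, is a solution of \eqref{eq:10-25-0}. By uniqueness in Proposition \ref{prop:10-25-3}(i), this is then the solution, which is the meaning of ``for all time.''

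Item (ii) is identical with $\sin\alpha=-1$. Here $\alpha'=-\sin2\theta\,(\e^{\frac{4}{g}\xi}-m-\sqrt{\frac{2}{g}}\lambda\e^{\frac{2}{g}\xi})$. Up to a positive factor the bracket is $r^2-\lambda r-n$, whose positive root $\frac{\lambda+\sqrt{\lambda^2+4n}}{2}$ corresponds to $S_{-\lambda}$. The overall minus sign flips the inequalities, as the statement says, and the equality case again follows from uniqueness.

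For (iii), take $\alpha\in2\pi\Z$, so $\sin\alpha=0$ and $\cos\alpha=1$. Then
\[
\alpha'=2(m_1\cos^2\theta-m_2\sin^2\theta)+\sqrt{\tfrac{2}{g}}\,\lambda\,\sin2\theta\,\e^{\frac{2}{g}\xi}.
\]
The last term is $\ge0$ because $\lambda\ge0$. The first term is positive exactly when $\tan^2\theta<m_1/m_2$, that is when $\theta<\theta^*$. So $\alpha'>0$.

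For (iv), take $\cos\alpha=-1$. Then $\alpha'=-2(m_1\cos^2\theta-m_2\sin^2\theta)+(\text{the same }\lambda\text{ term})$. For $\theta>\theta^*$ the first term is positive and the $\lambda$ term is still $\ge0$, so $\alpha'>0$.

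No step is a genuine obstacle. The only care needed is the bookkeeping that identifies the root of the quadratic with $S_{\pm\lambda}$ via $r=\sqrt{g/2}\,\e^{\frac{2}{g}\xi}$, and the appeal to uniqueness for the equality cases.
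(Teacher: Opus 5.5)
Your proposal is correct and is exactly the direct substitution into the $\alpha'$-equation of \eqref{eq:10-25-0} that the paper intends when it omits this proof as trivial; your uniqueness argument for the equality case is also right. Two small repairs are needed. First, $u=\sqrt{2/g}\,\e^{\frac{2}{g}\xi}$ equals $\frac{2}{g}r$, not $\sqrt{2/g}\,r$; working directly in $r$, the bracket is $\frac{2}{g}(r^2+\lambda r-n)$. Second, in (ii) with $\lambda>0$ the quadratic $r^2-\lambda r-n$ is not increasing on all of $r>0$. Argue instead that its roots have product $-n<0$, so there is exactly one positive root, with the quadratic negative before it and positive after it.
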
 
This now gives:
\begin{proposition}\label{prop:10-25-7}
	Let $\lambda\ge 0$ then $\xi_0^*$ is type 1 and type 2.
\end{proposition}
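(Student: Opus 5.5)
The plan is a continuity (openness) argument at the infimum. By Proposition \ref{prop:10-25-5}(iii), $\xi_0^*$ is of type 1 or type 2. I will show that \emph{pure} type 1 (type 1 but not type 2) and \emph{pure} type 2 (type 2 but not type 1) are each open conditions near $\xi_0^*$. Then a pure type 1 $\xi_0^*$ would make every $\xi_0$ slightly below $\xi_0^*$ of type 1, contradicting the definition of $\xi_0^*$ as an infimum. A pure type 2 $\xi_0^*$ would make every $\xi_0$ slightly above $\xi_0^*$ of type 2 and not type 1, contradicting that all $\xi_0>\xi_0^*$ are type 1. So the first time $T$ with $\theta(T)=\theta^*$ or $\xi'(T)=0$ must realize both events at once, i.e.\ $\xi_0^*$ is both type 1 and type 2.

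\textbf{Behaviour near $t=0$, uniformly in $\xi_0$.} At $t=0$ we have $\alpha=\pi/2$, so $\theta'(0)=\sin 2\theta^*>0$ and $\xi'(0)=0$. Since $\xi_0^*>S_\lambda$ by Proposition \ref{prop:10-25-5}(ii), Lemma \ref{lem:10-25-6}(i) gives $\alpha'(0)>0$ for all $\xi_0$ in a neighbourhood of $\xi_0^*$. Hence there is a uniform $\varepsilon>0$ such that on $(0,\varepsilon]$ we have $\xi'<0$ and $\theta>\theta^*$, with uniform margins at $t=\varepsilon$ by Proposition \ref{prop:10-25-3}(ii). This removes the degeneracy at the starting point, so only the behaviour on $[\varepsilon,T]$ needs to be controlled, and there Proposition \ref{prop:10-25-3}(ii) gives uniform closeness on compacta.

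\textbf{Openness of pure type 2.} Let $T$ be the first zero of $\xi'=\sin2\theta\cos\alpha$ after $0$, so $\alpha(T)\in\pm\frac{\pi}{2}+2\pi\Z$, and suppose $\theta\neq\theta^*$ on $(0,T]$. By Lemma \ref{lem:10-25-6}(i)/(ii), $\alpha'(T)\ne0$ unless $\xi\equiv S_{\pm\lambda}$ with $\alpha$ constant. That constant solution cannot be our solution, because along it $\theta$ moves monotonically while staying off $\theta^*$ only if it never started at $\theta^*$. So $\cos\alpha$ changes sign transversally at $T$. By continuous dependence, nearby $\xi_0$ have $\xi'$ vanishing near $T$ while $\theta$ stays away from $\theta^*$ on $[\varepsilon,T+\delta]$; such $\xi_0$ are pure type 2.

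\textbf{Openness of pure type 1.} Let $T$ be the first return to $\theta^*$, with $\xi'\neq0$ on $(0,T]$. I need $\theta$ to cross $\theta^*$ genuinely at $T$. If $\sin\alpha(T)\ne0$, this is immediate from the transversality of $\theta'$. If $\sin\alpha(T)=0$, then the term $m_1\cos^2\theta-m_2\sin^2\theta$ vanishes at $\theta^*$, so
\[
\alpha'(T)=\sqrt{\tfrac{2}{g}}\,\lambda\sin2\theta^*\,\e^{\frac{2}{g}\xi}.
\]
For $\lambda>0$ this is positive, so $\sin\alpha$ changes sign at $T$, which forces $\theta-\theta^*$ to have an odd-order zero and hence a crossing; this matches the sign pattern from Lemma \ref{lem:10-25-6}(iii)/(iv). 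For $\lambda=0$, the curve $\theta\equiv\theta^*$, $\alpha\in\pi\Z$ is itself a solution, so by uniqueness (Proposition \ref{prop:10-25-3}(i)) this tangency cannot occur. In either case $\theta-\theta^*$ changes sign near $T$, so nearby solutions also return to $\theta^*$ near $T$. Since $\xi'(T)\ne0$ and $\xi'$ is bounded away from zero on $[\varepsilon,T]$, these nearby $\xi_0$ are pure type 1.

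The main obstacle is this last step: making the return to $\theta^*$ robust when the crossing may be tangential, and in particular checking carefully that for $\lambda>0$ the sign of $\alpha'$ at $\alpha\in\pi\Z$ really produces a sign change of $\theta-\theta^*$. Once both openness statements are in hand, the infimum argument above closes the proof.
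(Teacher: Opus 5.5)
Your strategy matches the paper's. You use $\alpha'(0)>0$ (from $\xi_0^*>S_\lambda$) to control all nearby solutions on $[0,\varepsilon]$. You then show that \emph{type 1 but not type 2} and \emph{type 2 but not type 1} are each open near $\xi_0^*$. The first contradicts the infimum property; the second contradicts the fact that every $\xi_0>\xi_0^*$ is type 1. As in the paper, the pure type 2 case rests on the transversality $\alpha'(T)\neq 0$ from Lemma \ref{lem:10-25-6}(i),(ii).

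\textbf{The $\lambda>0$ tangency step is wrong as written.} This is the step you flagged as the main obstacle. If $\sin\alpha(T)=0$ and $\alpha'(T)>0$, then $\theta'=\sin2\theta\sin\alpha$ changes sign at $T$. So $\theta$ has a strict local extremum at $T$, and $\theta-\theta^*$ has an even-order zero. That is a tangency, not a crossing.

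The correct conclusion is that this configuration cannot occur. Since $\xi'\neq 0$ on $(0,T]$, $\alpha$ stays in $(\frac{\pi}{2},\frac{3\pi}{2})$, so $\sin\alpha(T)=0$ forces $\alpha(T)=\pi$. Then $\alpha'(T)>0$ gives $\alpha\in(\frac{\pi}{2},\pi)$ on some $(T-\delta,T)$, hence $\theta'>0$ there. So $\theta<\theta^*$ just before $T$, contradicting $\theta>\theta^*$ on $(0,T)$.

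Hence $\alpha(T)\in(\pi,\frac{3\pi}{2})$ and $\theta'(T)<0$, so the return to $\theta^*$ is transversal. This is exactly what the openness argument needs. The paper simply asserts $\theta_{\xi_0^*}(T+\varepsilon)<\theta^*$ here, so you were right to check it. Your $\lambda=0$ argument, via the explicit solution $\theta\equiv\theta^*$, $\alpha\equiv\pi$ and uniqueness, is fine.

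\textbf{A smaller point.} In the pure type 2 step, your reason for excluding the stationary solutions $\xi\equiv S_{\pm\lambda}$, $\alpha\equiv\pm\frac{\pi}{2}$ is muddled. The clean reason is that your solution has $\alpha(0)=\frac{\pi}{2}$ and $\alpha'(0)>0$, so $\alpha$ is not constant.

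With these two repairs the proof is complete.
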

\begin{proof}
	Let us begin by noting that \(\alpha_{\xi_0^*}'(0)>0\) since \(\xi_0^*>S_\lambda\) by proposition \ref{prop:10-25-5}(ii). 
	Together with \(\alpha_{\xi_0^*}(0)=\frac{\pi}{2}\), one gets for all \(\varepsilon>0\) small enough 
	\[
	\alpha'_{\xi_0^*}(t)>0, \ \ 0<\alpha_{\xi_0^*}(t)<\pi\ \ \  \forall t \in [0,\varepsilon].
	\]

	Since \(\xi_0^*\) is not type 3 by proposition \ref{prop:10-25-5}(iii), it must be at least one of type 1 or type 2.
	We first assume that \(\xi_0^*\) is type 1 but not type 2, 
	then we have a \(T>0\) so that for all \(\varepsilon>0\) small enough, one gets:
	\[
	\cos\alpha_{\xi_0^*}(t)\ne 0\ \ \ \forall t \in [\varepsilon,T+\varepsilon], \ \ \ \ \ \ \theta_{\xi_0^*}(T+\varepsilon)< \theta^*.
	\]

	Since the solutions of \eqref{eq:10-25-0} vary continuously in the initial conditions with respect to the topology of uniform convergence on compacta, one finds a neighbourhood \(U(\varepsilon)\) of \(\xi_0^*\) so that for all \(\xi_0 \in U(\varepsilon)\), one has 
	\[
		\alpha'_{\xi_0}(t)>0, \ \ 0<\alpha_{\xi_0}(t)<\pi\ \ \  \forall t \in [0,\varepsilon]
	\]
	and
	\[
	\cos\alpha_{\xi_0}(t)\ne 0\ \ \ \forall t \in [\varepsilon,T+\varepsilon], \ \ \ \ \ \ \theta_{\xi_0}(T+\varepsilon)< \theta^*.
	\]
	The above shows that for \(\xi_0\in U(\varepsilon)\), one has that \(\xi_0\) is type 1 which contradicting the definition of \(\xi_0^*\).

	The assumption that \(\xi_0^*\) is type 1 but not type 2 leads a contradiction via a similar argument, we carry out this:

	As before, we have a \(T>0\) so that for all \(\varepsilon>0\) small enough, one gets:
	\[
	\theta_{\xi_0^*}(t)>\theta^*\ \ \ \forall t \in[\varepsilon,T+\varepsilon].
	\]
	Without loss of generality, we may assume \(T>0\) is the first time satisfying definition \ref{def:10-25-4}(ii), hence \(\alpha_{\xi_0^*}(T)=\frac{\pi}{2}\) or \(\frac{3 \pi}{2}\).
	If \(\alpha_{\xi_0^*}(T)=\frac{\pi}{2}\) then \(\alpha'_{\xi_0^*}(T)\le0\), but lemma \ref{lem:10-25-6}(i) shows that \(\alpha'_{\xi_0^*}(T)\ne 0\).
	This implies that for all \(\varepsilon>0\) small enough one gets:
	\[
	\alpha_{\xi_0^*}(T+\varepsilon)<\frac{\pi}{2}.
	\]
	Then one gets a neighbourhood \(U(\varepsilon)\) of \(\xi_0^*\) so that for all \(\xi_0 \in U(\varepsilon)\), one has 
	\[
		\alpha'_{\xi_0}(t)>0, \ \ 0<\alpha_{\xi_0}(t)<\pi\ \ \  \forall t \in [0,\varepsilon]
	\] 
	and
	\[
		\theta_{\xi_0}(t)>\theta^*\ \ \ \forall t \in[\varepsilon,T+\varepsilon],\ \ \ \ \ 	\alpha_{\xi_0}(T+\varepsilon)<\frac{\pi}{2}.
	\]
	The above shows that for \(\xi_0 \in U(\varepsilon)\), one has that \(\xi_0\) is not type 1 which again contradicting the definition of \(\xi_0^*\).

	The other case can be treated similarly.
\end{proof}
Now theorem \ref{thm:2-27-1} follows from proposition \ref{prop:2-27-1} and proposition \ref{prop:10-25-7}.
Figure \ref{fig3.2} shows some periodic curves with \(\lambda=1\).
Note that the only possible value of \(g\) are 1, 2, 3, 4, 6. 
It is one of the primary results in M\"{u}nzner's seminal works \cite{Mu1,Mu2}, see also theorem~3.49 on page~136 of \cite{Ce}.
If \(g=3\) then \(m_1=m_2\) necessarily, and the multiplicity equals 1, 2, 4 or 8, see Cartan's \cite{Ca}.
If \(g=4\) then the multiplicity equals 1 or 2 when \(m_1=m_2\), as classified by Grove and Halperin \cite{GH}. 
If \(g=6\) then \(m_1=m_2\) necessarily, and the multiplicity equals 1 or 2, see M\"{u}nzner's \cite{Mu2} and Abresch's \cite{Ab}.

\noindent
	\begin{figure}[ht]\centering
		\subfigure[\(g=3,m_1=1,\xi_0= 0.94822\)] {	
			\includegraphics[width=0.2\linewidth]{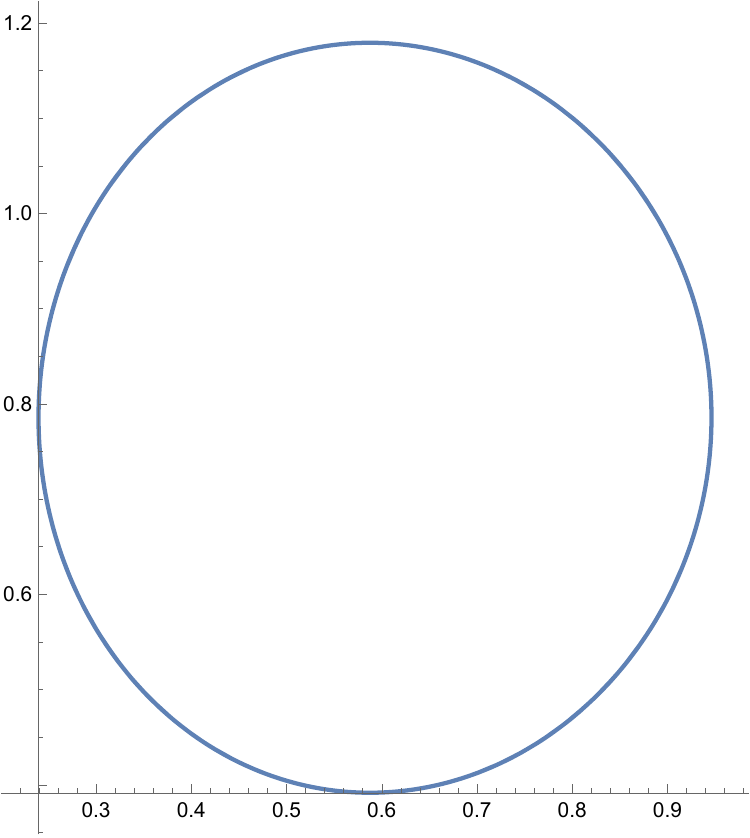}  \label{g=3,m1=1}
		}\hfil
		\subfigure[\(g=3,m_1=2,\xi_0= 1.3365\)] {
			\includegraphics[width=0.201\linewidth]{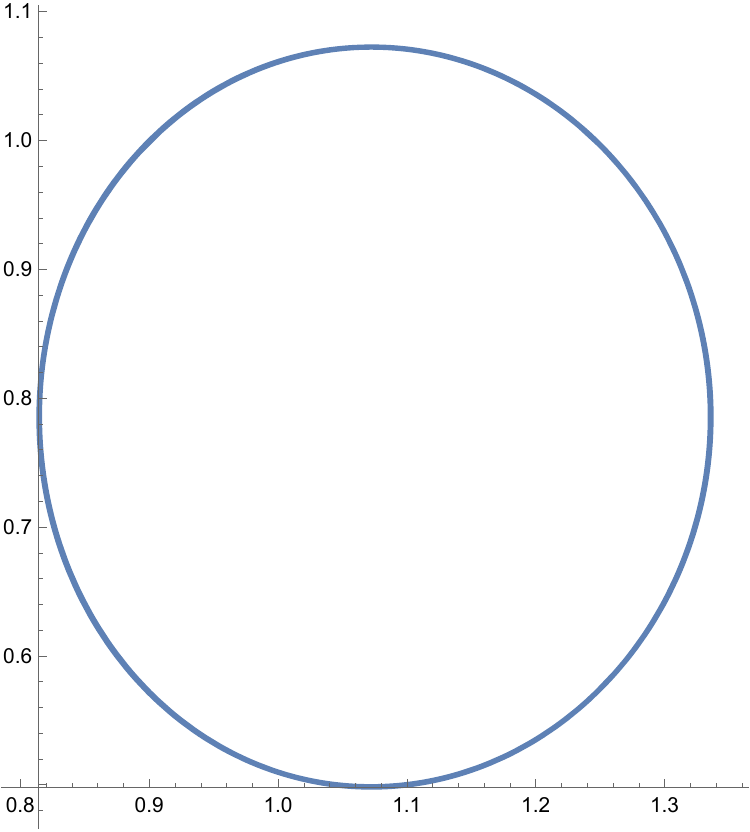}   \label{g=3,m1=2}
		}\hfil
		\subfigure[\(g=3,m_1=4,\xi_0= 1.76535\)] {
			\includegraphics[width=0.2\linewidth]{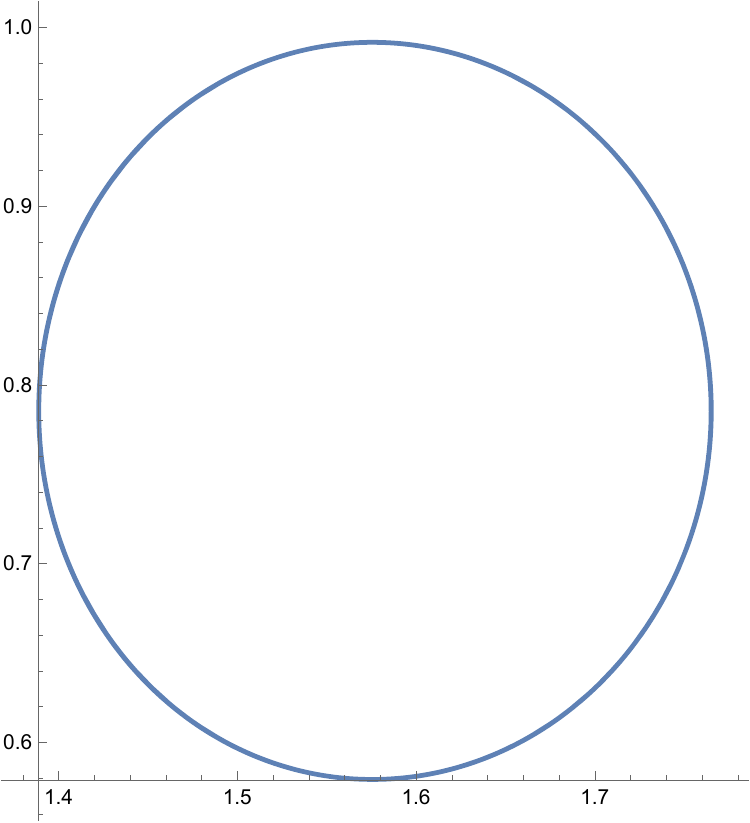}   \label{g=3,m1=4}
		}\hfil
		\subfigure[\(g=3,m_1=8,\xi_0= 2.22282\)] {
			\includegraphics[width=0.201\linewidth]{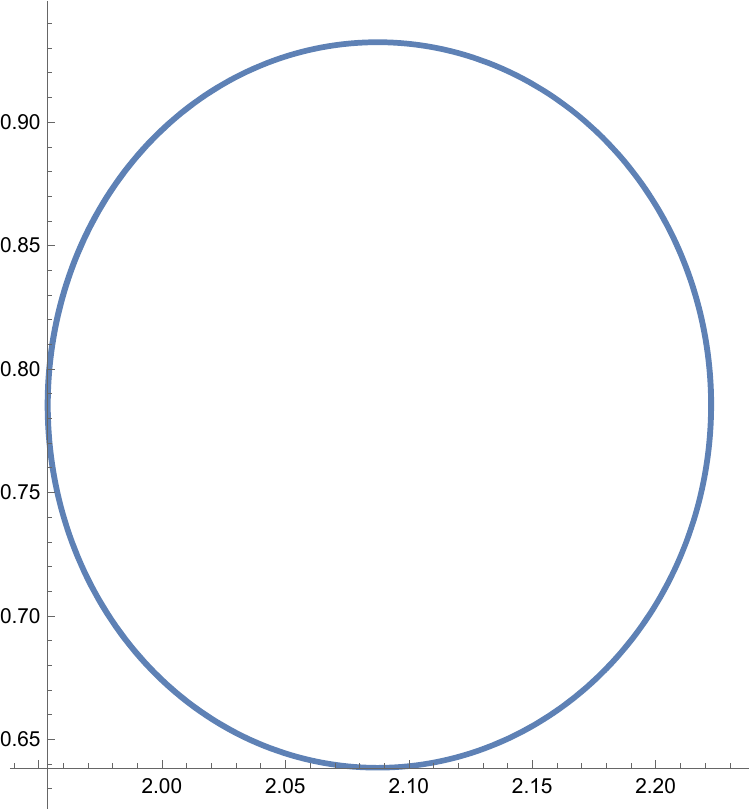}   \label{g=3,m1=8}
		}\hfil
		\subfigure[\(g=4,m_1=1,\xi_0= 1.15282\)] {	
			\includegraphics[width=0.2\linewidth]{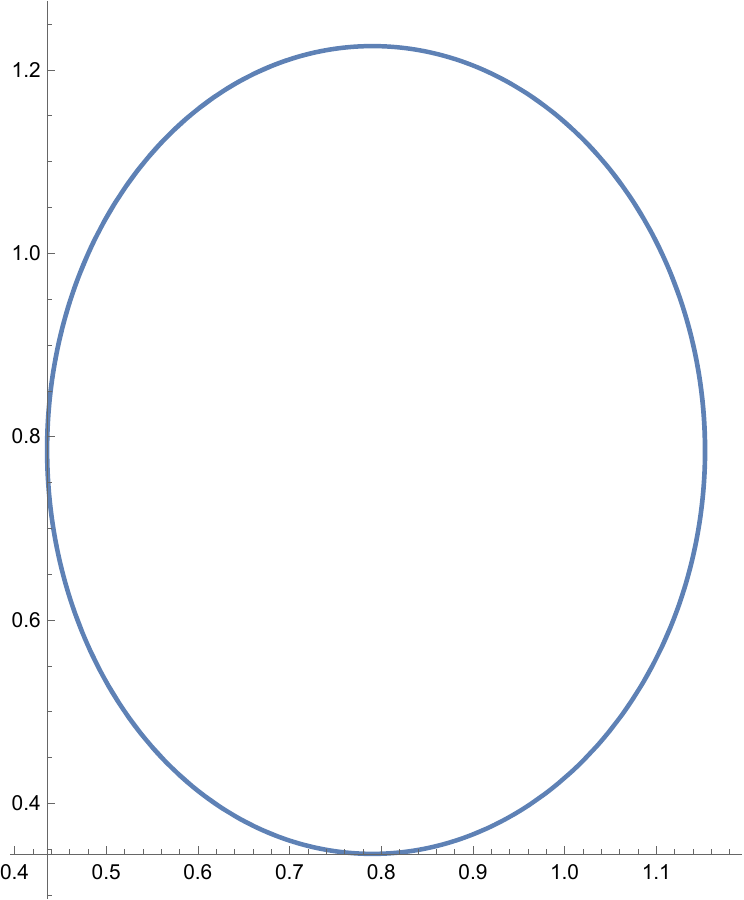}  \label{g=4,m1=1}
		}\hfil
		\subfigure[\(g=4,m_1=2,\xi_0= 1.70451\)] {
			\includegraphics[width=0.2\linewidth]{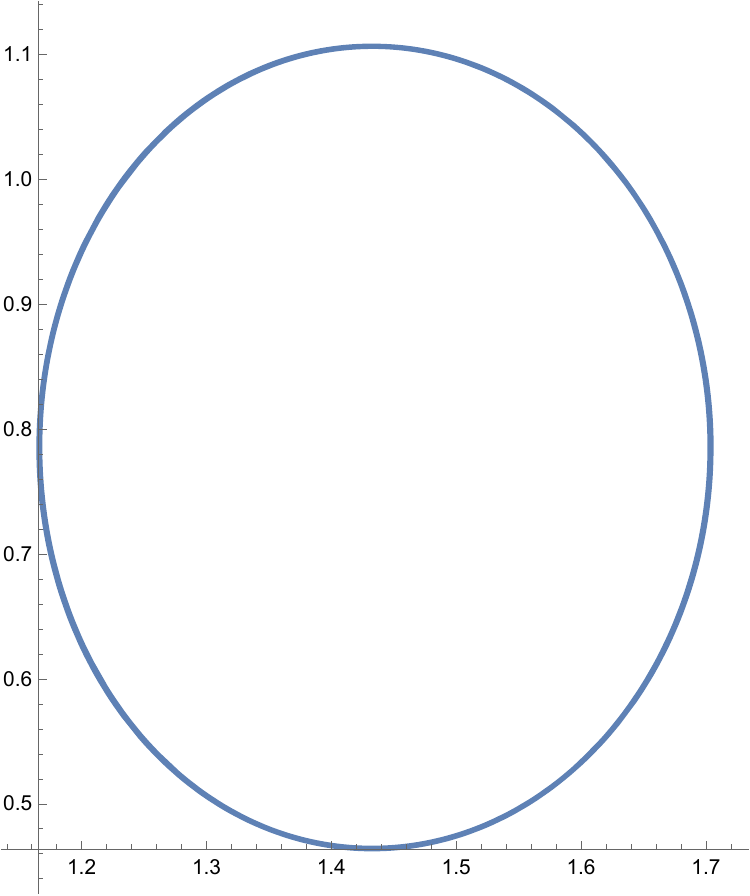}   \label{g=4,m1=2}
		}\hfil
		\subfigure[\(g=6,m_1=1,\xi_0= 1.53094\)] {	
			\includegraphics[width=0.2\linewidth]{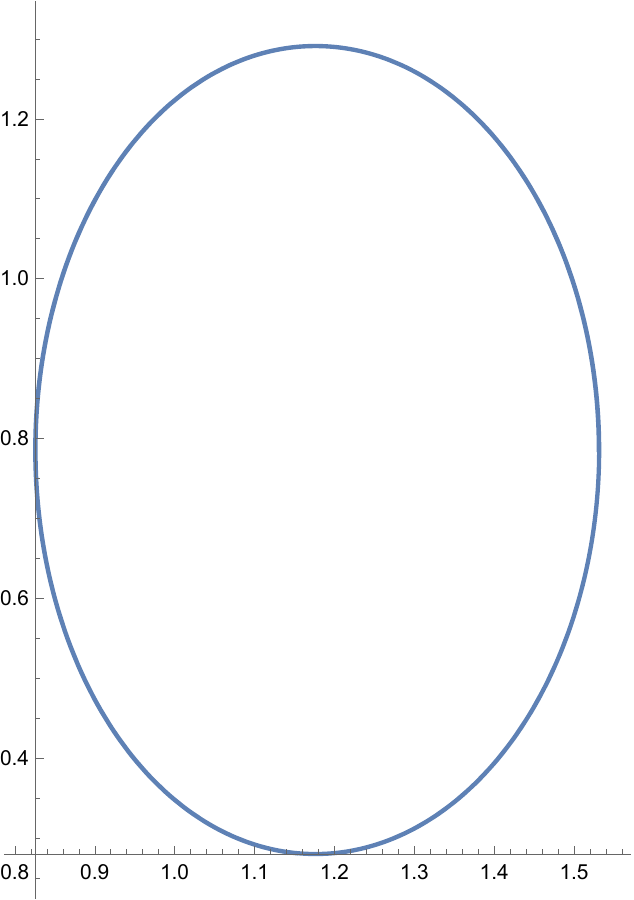}  \label{g=6,m1=1}
		}\hfil
		\subfigure[\(g=6,m_1=2,\xi_0= 2.4184\)] {
			\includegraphics[width=0.201\linewidth]{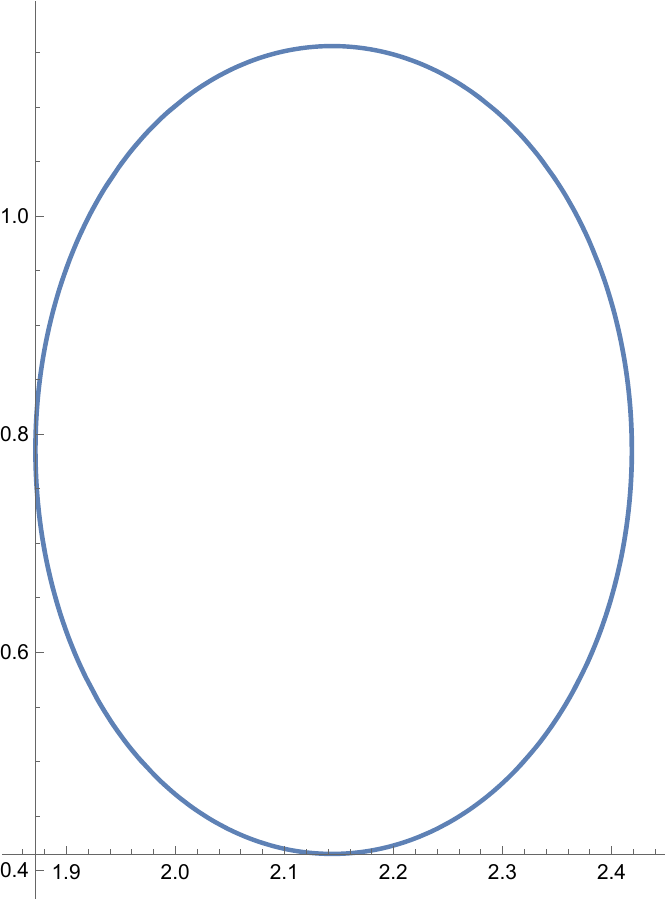}   \label{g=6,m1=2}
		}
		\caption{\ Some periodic curves with \(\lambda=1\) and \(m_1=m_2\).}
		\label{fig3.2}
	\end{figure}
%%%%%%%%%%%%%%%%%%%%%%%%%%%%%%%%%%%%%%%%%%%%%%%%%%%%%%%%%%%%%%%%%%%%%%%%%%%%%%%%%%%%%%%%%%%%%%%%%%%%%%%%%%%%%%%%%%%%%%%%%%%%%%%%%%%%%%%%%%%%%%%%%%%%%%%%%%%%%%%%%%%%%%%%%%%%%%%%%%

\section{Proof of Proposition \ref{prop:10-25-5}}\label{sec:4}
\subsection{Crossing in Finite Time}
\begin{lemma}\label{lem:10-26-1}
	Let $t_0$ be in $\R$, one has:
	\begin{enumerate}[{\rm (i)}]
		\item If $\xi(t_0)>S_\lambda$, $\xi'(t_0)<0$ and $\theta'(t)>0$ for all $t>t_0$, then there exists a time $T\in(0,\infty)$ so that $\xi(t_0+T)=S_\lambda$.
		\item If $\xi(t_0)>S_{-\lambda}$, $\xi'(t_0)<0$ and $\theta'(t)<0$ for all $t>t_0$, then there exists a time $T\in(0,\infty)$ so that $\xi(t_0+T)=S_{-\lambda}$.
		\item If $\xi(t_0)<S_\lambda$, $\xi'(t_0)>0$ and $\theta'(t)>0$ for all $t>t_0$, then there exists a time $T\in(0,\infty)$ so that $\xi(t_0+T)=S_\lambda$.
		\item If $\xi(t_0)<S_{-\lambda}$, $\xi'(t_0)>0$ and $\theta'(t)<0$ for all $t>t_0$, then there exists a time $T\in(0,\infty)$ so that $\xi(t_0+T)=S_{-\lambda}$.
	\end{enumerate}
\end{lemma}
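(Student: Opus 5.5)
The plan is to prove (i) in detail; (ii)–(iv) follow by the same argument with signs and the roles of $\theta\to\frac{\pi}{2}$ and $\theta\to 0$ swapped (using \eqref{eq:10-25-2} and the term $m_1\cot\theta$ in the cases with $\theta'<0$). I argue by contradiction: suppose $\xi(t)>S_\lambda$ for all $t>t_0$.

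\emph{Step 1: $\alpha$ stays in the second quadrant.} Since $\theta'=\sin2\theta\sin\alpha>0$ for $t>t_0$, the angle $\alpha$ stays in $(0,\pi)$ modulo $2\pi$. Since $\xi'(t_0)<0$, we start with $\alpha\in(\frac{\pi}{2},\pi)$. To leave this interval, $\alpha$ would have to reach $\pi$, which is excluded by $\theta'>0$, or decrease to $\frac{\pi}{2}$. But at a time with $\alpha=\frac{\pi}{2}$ and $\xi>S_\lambda$, Lemma \ref{lem:10-25-6}(i) gives $\alpha'>0$, so $\alpha$ cannot decrease through $\frac{\pi}{2}$. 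Hence $\cos\alpha<0$ and $\xi'<0$ for all $t>t_0$. Then $\xi$ is decreasing and bounded below by $S_\lambda$, and $\theta$ is increasing and bounded by $\frac{\pi}{2}$, so both converge, to limits $\xi_\infty\ge S_\lambda$ and $\theta_\infty\in(0,\frac{\pi}{2}]$.

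\emph{Step 2: the case $\theta_\infty<\frac{\pi}{2}$.} Here $\sin 2\theta\ge c>0$ on $[t_0,\infty)$. Since $\xi$ and $\theta$ are monotone and bounded,
\[
\int_{t_0}^\infty \sin2\theta\,(|\cos\alpha|+\sin\alpha)\,dt=\int_{t_0}^\infty(-\xi'+\theta')\,dt<\infty .
\]
But $|\cos\alpha|+\sin\alpha\ge1$, so the integrand is at least $c$, and the integral diverges. This is a contradiction.

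\emph{Step 3: the case $\theta_\infty=\frac{\pi}{2}$.} I expect this to be the main obstacle. Since $\theta'>0$, I write the curve as a graph $\xi(\theta)$ on $[\theta(t_0),\frac{\pi}{2})$, with $p:=\frac{d\xi}{d\theta}=\cot\alpha<0$, so that \eqref{eq:10-25-1} applies. Since $\xi>S_\lambda$ and $\sqrt{1+p^2}\ge1$, the quantity
\[
\e^{\frac{4}{g}\xi}-m+\lambda\sqrt{1+p^2}\sqrt{\tfrac{2}{g}}\,\e^{\frac{2}{g}\xi}
\]
is positive; here I use that $S_\lambda$ is exactly the root when the square root equals $1$, and that $\lambda\ge0$. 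Equation \eqref{eq:10-25-1} then gives
\[
p'\le(1+p^2)(m_2\tan\theta-m_1\cot\theta)\,p .
\]
Since $p<0$, we have $p'<0$ on some interval $[\theta_1,\frac{\pi}{2})$ where $m_2\tan\theta>m_1\cot\theta$. Dividing by $p<0$ (and dropping $1+p^2\ge1$) yields
\[
\frac{d}{d\theta}\ln|p|\ge m_2\tan\theta-m_1\cot\theta .
\]
Integrating gives $|p(\theta)|\ge c(\cos\theta)^{-m_2}$ near $\frac{\pi}{2}$. Because $m_2\ge1$, the integral $\int^{\pi/2}|p|\,d\theta$ diverges, so $\xi\to-\infty$. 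This contradicts $\xi>S_\lambda$.

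Both cases are contradictory, so a finite $T$ with $\xi(t_0+T)=S_\lambda$ exists.

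\emph{Other cases.} For (iii) and (iv), $\xi$ is increasing. Lemma \ref{lem:10-25-6}(i),(ii) prevent $\alpha$ from crossing $\pm\frac{\pi}{2}$ while $\xi$ is on the wrong side of $S_{\pm\lambda}$, so $\xi$ stays monotone and bounded above by $S_{\pm\lambda}$. The same two-case argument applies, now with the bracket in \eqref{eq:10-25-1} or \eqref{eq:10-25-2} negative, and with blow-up of $|p|$ forcing $\xi\to+\infty$. In (ii) and (iv), the endpoint is $\theta\to0$, and the singular term used is $m_1\cot\theta$.
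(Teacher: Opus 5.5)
Your argument for (i) is correct for $\lambda\ge 0$, but it takes a different route from the paper. Both arguments begin the same way: $\alpha$ is trapped in $(\frac{\pi}{2},\pi)$, so $\xi$ and $\theta$ converge. From there the paper proceeds as follows:
\begin{itemize}
\item it gets $\theta\to\frac{\pi}{2}$ from bounds on $\xi''$ and $\theta''$;
\item it argues $\alpha\to\frac{\pi}{2}$, so that $\frac{d\xi}{d\theta}\to0$ at $\theta=\frac{\pi}{2}$;
\item it invokes Liang's uniqueness theorem for the singular initial value problem at $\frac{\pi}{2}$;
\item it rules out $\xi(\frac{\pi}{2})=S_\lambda$ by uniqueness against the constant solution, and $\xi(\frac{\pi}{2})>S_\lambda$ because the endpoint value $\frac{d^2\xi}{d\theta^2}(\frac{\pi}{2})=-\frac{1}{1+m_2}(\cdots)$ is negative.
\end{itemize}
Your total-variation argument and the inequality $\frac{d}{d\theta}\ln|p|\ge m_2\tan\theta-m_1\cot\theta$ replace all of this with elementary estimates. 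They give $|p|\ge c(\cos\theta)^{-m_2}$ and hence $\xi\to-\infty$, and they need no external theorem. The price is that you use the graph equation along the whole graph, where $|p|$ is large. The paper only evaluates it at the endpoint, where $p=0$ and $\sqrt{1+p^2}=1$. That is why the paper's argument really is insensitive to the sign of $\lambda$ and to which of the four cases one is in.

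This is exactly where your paragraph on the other cases fails as written. For $\lambda>0$ your sign claims for the bracket hold in (i) and (iv), but fail in (ii) and (iii).
\begin{itemize}
\item In (iii), $\xi<S_\lambda$ only gives $\e^{\frac{4}{g}\xi}-m+\lambda\sqrt{\frac{2}{g}}\,\e^{\frac{2}{g}\xi}<0$. The bracket in \eqref{eq:10-25-1} contains the additional nonnegative term $\lambda\bigl(\sqrt{1+p^2}-1\bigr)\sqrt{\frac{2}{g}}\,\e^{\frac{2}{g}\xi}$, which can make it positive once $|p|$ is large. So ``now with the bracket negative'' is false.
\item Symmetrically, in (ii) the term $-\lambda\sqrt{1+p^2}\sqrt{\frac{2}{g}}\,\e^{\frac{2}{g}\xi}$ can make the bracket of \eqref{eq:10-25-2} negative even though $\xi>S_{-\lambda}$.
\item The lemma carries no sign hypothesis on $\lambda$, and for $\lambda<0$ the same problem moves to (i) and (iv).
\end{itemize}

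The repair is short. Along the trajectory $\xi$ stays between $\xi(t_0)$ and $S_{\pm\lambda}$, and $0\le\sqrt{1+p^2}-1\le|p|$. So the offending term is bounded by $C|p|$ in absolute value and is absorbed by the singular coefficient.
\begin{itemize}
\item In (iii) you get $p'\ge(1+p^2)\,p\,(m_2\tan\theta-m_1\cot\theta-C)$, hence $\frac{d}{d\theta}\ln p\ge m_2\tan\theta-m_1\cot\theta-C$ near $\frac{\pi}{2}$.
\item In (ii) you get $p'\le-(1+p^2)\,p\,(m_1\cot\theta-m_2\tan\theta-C)$, hence $-\frac{d}{d\theta}\ln p\ge m_1\cot\theta-m_2\tan\theta-C$ near $0$.
\end{itemize}
These still give $p\ge c(\cos\theta)^{-m_2}$, respectively $p\ge c(\sin\theta)^{-m_1}$, and the same contradiction. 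With this modification your argument proves all four items for every $\lambda$. Only (i) is used later in the paper.
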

\begin{proof}
    Only the first item will be proved, as the proofs for the others follow a similar approach.
	If this were not true then $\xi(t)>S_\lambda$ for all $t>t_0$. 
	By lemma \ref{lem:10-25-6}(i) one would then have that $\xi'(t)<0$ for $t\ge t_0$ since $\xi'(t_0)<0$.
%	, hence $\xi(t)$ converge. 
	Thus, with assumption, both $\theta(t)$ and $\xi(t)$ converge.
	Since $\xi(t)$ is bounded one finds from \eqref{eq:10-25-0} that $\alpha'(t)$ is bounded and then $\xi''(t)$, $\theta''(t)$ are also bounded.
	Combining with $\xi'<0$, $\theta'>0$ and the convergence of $\xi$ and $\theta$ one sees that $\xi'(t)=\cos\alpha\sin 2\theta$ and $\theta'(t)=\sin\alpha\sin 2\theta$ necessarily converge to 0, which imply that $\lim_{t \to \infty}\theta(t)=\frac{\pi}{2}$.
	Hence the trajectory $\left(\xi(t),\theta(t)\right)$, $t\in(t_0,\infty) $ can be viewed as a graph $(\xi(\theta), \theta) $ over $\theta $ in the interval $(\theta(t_0),\frac{\pi}{2}) $.
	As noted before, this graph satisfies \eqref{eq:10-25-1}.
	The limit of $\theta(t) $ together with the equation about $\alpha'$ in system \eqref{eq:10-25-0} give that $\lim_{t\to \infty}\alpha(t)=\frac{\pi}{2}$, which implies that $\lim_{\theta\to \frac{\pi}{2}^-}\frac{\mathrm{d}\xi}{\rm d\theta}	(\theta)= 0$. 
	Hence we get a solution of \eqref{eq:10-25-1} which is of class \(C^1((\theta(t_0),\frac{\pi}{2}])\cap C^2((\theta(t_0),\frac{\pi}{2}))\) with initial conditions \(\xi(\frac{\pi}{2})=\lim_{t\to\infty}\xi(t)\) and \(\frac{\mathrm{d}\xi}{\rm d\theta}(\frac{\pi}{2})=0\).
	By theorem 1.1 in \cite{Liang}, such solution is unique and is in fact of class \(C^2((\theta(t_0),\frac{\pi}{2}])\).
	Note that \(\frac{\mathrm{d}^2\xi}{\rm d\theta^2}(\frac{\pi}{2})\ge 0\) since \(\theta'(t)>0\) and \(\xi'(t)<0\) for all \(t>t_0\), and \eqref{eq:10-25-1} admits a constant solution \(\xi(\theta)=S_\lambda\).
	If \(\xi(\frac{\pi}{2})=S_\lambda\), we get two solutions with initial conditions \(\xi(\frac{\pi}{2})=S_\lambda\) and \(\frac{\mathrm{d}\xi}{\rm d\theta}(\frac{\pi}{2})=0\) which is a contradiction.
	If \(\xi(\frac{\pi}{2})>S_\lambda\), then one concludes from \eqref{eq:10-25-1} that  
	\[
		\frac{\mathrm{d}^2\xi}{\rm d\theta^2}\left(\frac{\pi}{2}\right)=-\frac{1}{1+m_2}\left(\e^{\frac{4}{g}\xi(\frac{\pi}{2})} -m+\lambda\sqrt{\frac{2}{g}}\,\e^{\frac{2}{g}\xi(\frac{\pi}{2})}\right)<0 
	\]
	which is also a contradiction. 
	This completes the proof.  
\end{proof}

\begin{lemma}\label{lem:10-26-2}
	If $\lambda\ge 0$ and for some $t_0$ one has $\theta(t_0)<\theta^*$ ($\theta(t_0)>\theta^*$) and $\theta'(t),\,\xi'(t)>0$ for all $t\ge t_0$ ($\theta'(t),\,\xi'(t)<0$ for all $t\ge t_0$) then there exists a time $T\in(0,\infty)$ so that $\theta(t_0+T)=\theta^*$.
\end{lemma}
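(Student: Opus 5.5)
We argue by contradiction, following the scheme of Lemma \ref{lem:10-26-1}, and treat only the first case: $\theta(t_0)<\theta^*$ with $\theta',\xi'>0$ for all $t\ge t_0$. The second case is symmetric, obtained by exchanging the roles of $\alpha\in 2\pi\Z$ and $\alpha\in\pi+2\pi\Z$ and using Lemma \ref{lem:10-25-6}(iv) in place of (iii).

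\textbf{Setup.} Suppose $\theta(t)<\theta^*$ for all $t\ge t_0$. Since $\theta'=\sin2\theta\sin\alpha>0$ and $\xi'=\sin2\theta\cos\alpha>0$ with $\theta\in(0,\pi/2)$, we have $\alpha(t)\in(0,\pi/2)$ modulo $2\pi$ for all $t\ge t_0$. Also $\theta$ is increasing and bounded by $\theta^*$, so $\theta(t)\to\theta_\infty\in(\theta(t_0),\theta^*]$. In particular $\sin2\theta$ stays bounded away from $0$ along the trajectory.

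\textbf{Step 1: $\xi$ is unbounded.} Suppose instead that $\xi$ is bounded. It is increasing, so it converges. As in Lemma \ref{lem:10-26-1}, $\alpha'$ is then bounded, hence $\xi''$ and $\theta''$ are bounded. Together with the monotone convergence of $\xi$ and $\theta$, this forces $\xi'\to0$ and $\theta'\to0$. Since $\sin2\theta$ is bounded below, this means $\cos\alpha\to0$ and $\sin\alpha\to0$ simultaneously, which is impossible. Hence $\xi(t)\to\infty$.

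\textbf{Step 2: contradiction when $\xi$ is large.} We show that for $\xi$ large, $\alpha$ is pushed out of $(0,\pi/2)$ through $\pi/2$.
\begin{itemize}
\item Because $\theta<\theta^*$ gives $m_1\cos^2\theta-m_2\sin^2\theta>0$, and $\lambda\ge0$, every term of $\alpha'$ is nonnegative whenever $\alpha\in[0,\pi/2]$.
\item Once $\xi>S_\lambda$, the term $\sin2\theta\sin\alpha(\e^{4\xi/g}-m)$ is strictly positive.
\item Hence $\alpha$ is nondecreasing on the time interval where $\xi>S_\lambda$.
\item For $t$ large, $\alpha(t)\ge\alpha(t_1)>0$ for some fixed $t_1$. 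Then $\sin\alpha\ge c>0$, so $\alpha'\ge c'\,(\e^{4\xi/g}-m)\to\infty$.
\end{itemize}
Therefore $\alpha$ reaches $\pi/2$ in finite time. At that moment $\xi'=0$, contradicting $\xi'>0$ for all $t\ge t_0$. (Lemma \ref{lem:10-25-6}(i) also confirms that at $\alpha=\pi/2$ with $\xi>S_\lambda$ we have $\alpha'>0$, so $\alpha$ genuinely crosses $\pi/2$.)

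\textbf{Main obstacle.} The delicate point is the boundary case where $\alpha(t)\to0$ as $t\to\infty$. We rule it out as follows. The monotonicity argument of Step 2 shows $\alpha$ is eventually nondecreasing and positive, so it cannot tend to $0$; this is exactly where the hypotheses $\lambda\ge0$ and $\theta<\theta^*$, as in Lemma \ref{lem:10-25-6}(iii), are needed. Once $\alpha$ is bounded below by a positive constant, the exponential growth of $\e^{4\xi/g}$ closes the argument.
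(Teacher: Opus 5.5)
Your proof is correct and is essentially the paper's argument. It uses the same dichotomy: when $\xi$ is bounded, bounded second derivatives force $\xi',\theta'\to0$ and hence $\sin2\theta\to0$; when $\xi\to\infty$, the hypotheses $\lambda\ge0$ and $\theta<\theta^*$ make $\alpha'$ eventually positive. The only cosmetic difference is that you close the second case by letting $\alpha$ hit $\pi/2$, rather than by the paper's bound $\theta'>\varepsilon$.

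One small slip: for $\lambda>0$ the term $\sin2\theta\sin\alpha(\e^{4\xi/g}-m)$ is positive only for $\xi>\frac{g}{4}\ln m$, not for all $\xi>S_\lambda$, but this is harmless since $\xi\to\infty$. Similarly, in the mirrored case $\xi\to-\infty$, so there you get only $\alpha'\ge c>0$ rather than $\alpha'\to\infty$; that still suffices.
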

\begin{proof}
	We only prove the first part of this lemma as the second part can be treated analogously.
	Let us first note that the assumptions yields that $\theta(t)$ converges.
	Next we divide the proof into two cases depending on whether \(\lim_{t\to \infty}\xi(t)\) exists.

	If $\lim_{t\to \infty}\xi(t)<\infty$, one then gets from \eqref{eq:10-25-0} that $\alpha'(t)$ remains bounded, which further implies the boundedness of $\xi''(t)$ and $\theta''(t)$.
	Together with $\xi'>0$, $\theta'>0$ and the convergence of $\theta$ and $\xi$ one gets that both $\theta'(t)=\sin\alpha\sin 2\theta$ and $\xi'(t)=\cos\alpha\sin 2\theta$  converge to 0.
	Hence $\lim_{t\to \infty}\sin 2\theta(t)=0$, namely, $\lim_{t\to \infty}\theta(t)=\frac{\pi}{2}$.
	In particular, there exists a time $T\in(0,\infty)$ so that $\theta(t_0+T)=\theta^*$.
	
	If $\lim_{t\to \infty}\xi(t)=\infty$, we proceed by a contradiction argument.
	Assuming $\lim_{t\to \infty}\theta(t)\le \theta^*$, one then concludes from the assumptions and \eqref{eq:10-25-0} that $\alpha'(t)>0$ for large $t$, which follows that $\theta'(t)=\sin\alpha\sin 2\theta>\varepsilon$ for large $t$ and some positive $\varepsilon$.
	We arrive at a contradiction by integrating $\theta'(t)$ over some interval, hence $\lim_{t\to \infty}\theta(t)> \theta^*$.
	This completes the proof of first part of this lemma.
%	namely, there exists a time $T\in(0,\infty)$ so that $\theta(t_0+T)=\theta^*$.
\end{proof}

%%%%%%%%%%%%%%%%%%%%%%%%%%%%%%%%%%%%%%%%%%%%%%%%%%%%%%%%%%%%%%%%%%%%%%%%%%%%%%%%%%%%%%%%%%%%%%%%%%%%%%%%%%%%%%%%%%%%%%%%%%%%%%%%%%%%%%%%%%%%%%%%%%%%%%%%%%%%%%%%%%%%%%%%%%%%%%%%%%
\subsection{Proof of Proposition \ref{prop:10-25-5}(i)} 
In what follows $\xi,\,\theta$ and $\alpha$ will denote the components of the solution of \eqref{eq:10-25-0} with initial condition $(\xi,\theta,\alpha)(0)=(\xi_0,\theta^*,\frac{\pi}{2})$.
Let us begin by noting that for $\xi_0>S_\lambda$ one has $\alpha'(0)>0$ whence $\alpha(t)\in (\frac{\pi}{2},\frac{\pi}{2}+\varepsilon)$, \(t \in(0,\varepsilon)\) for some small $\varepsilon>0$.
\begin{lemma}\label{lem:10-26-3}
	If $\xi_0$ is large enough then there is a time $T_1>0$ so that $\frac{\xi'(T_1)}{\theta'(T_1)}=-1$ while $\theta'(t),\,\xi'(t)<0$ for all $t\in (0,T_1]$.
\end{lemma}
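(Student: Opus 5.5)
The idea is to work on a very short time interval on which the $\alpha$-equation in \eqref{eq:10-25-0} is dominated by the term $\sin 2\theta\,\sin\alpha\,\e^{\frac{4}{g}\xi}$, which is huge when $\xi_0$ is large. On that interval $\alpha$ rotates from $\frac{\pi}{2}$ to $\frac{3\pi}{4}$ before $\xi$ and $\theta$ can move appreciably. The conclusion then follows because
\[
\frac{\xi'}{\theta'}=\frac{\sin 2\theta\cos\alpha}{\sin 2\theta\sin\alpha}=\cot\alpha
\]
whenever $\theta'\neq 0$, and $\cot\alpha=-1$ exactly when $\alpha=\frac{3\pi}{4}$ (modulo $\pi$).

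\emph{A sign correction.} Since the ratio equals $-1$, $\xi'$ and $\theta'$ must have opposite signs at $T_1$. For $\alpha\in(\frac{\pi}{2},\frac{3\pi}{4}]$ and $\theta\in(0,\frac{\pi}{2})$, which is guaranteed by Proposition \ref{prop:10-25-3}(iii), we have $\xi'<0<\theta'$. So the sign statement actually obtained is $\xi'<0$ and $\theta'>0$ on $(0,T_1]$, and I read the lemma in this sense.

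\textbf{Step 1 (bootstrap setup).} Fix $\delta>0$ small enough that $[\theta^*-\delta,\theta^*+\delta]\subset(0,\frac{\pi}{2})$, and let $c_0:=\min_{|\theta-\theta^*|\le\delta}\sin 2\theta>0$. Let $C_0:=2\max(m_1,m_2)$, which bounds $|2\cos\alpha\,(m_1\cos^2\theta-m_2\sin^2\theta)|$. Define $\tau$ as the supremum of the times $t>0$ such that on $[0,t]$ we have
\[
|\xi-\xi_0|\le \delta,\qquad |\theta-\theta^*|\le\delta,\qquad \alpha\in\left[\tfrac{\pi}{2},\tfrac{3\pi}{4}\right].
\]
Because $\alpha'(0)>0$ when $\xi_0>S_\lambda$, as noted before the lemma, we have $\tau>0$.

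\textbf{Step 2 (lower bound on $\alpha'$).} On $[0,\tau]$ we have $\sin\alpha\ge\frac{\sqrt2}{2}$, and the $\lambda$-term $\sqrt{\frac{2}{g}}\lambda\sin2\theta\,\e^{\frac{2}{g}\xi}$ is nonnegative because $\lambda\ge0$. Hence
\[
\alpha'\ge \frac{\sqrt2}{2}\,c_0\left(\e^{\frac{4}{g}(\xi_0-\delta)}-m\right)-C_0=:K(\xi_0),
\]
and $K(\xi_0)\to\infty$ as $\xi_0\to\infty$.

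\textbf{Step 3 (closing the bootstrap).} Since $|\xi'|,|\theta'|\le1$, the quantities $\xi$ and $\theta$ move by at most $t$ on $[0,t]$. Take $\xi_0$ so large that $K(\xi_0)>0$ and $\frac{\pi}{4K(\xi_0)}<\delta$. Then within time $t\le\frac{\pi}{4K}<\delta$, the angle $\alpha$, which is strictly increasing on $[0,\tau]$, must reach $\frac{3\pi}{4}$. During this time $\xi$ and $\theta$ stay strictly inside their $\delta$-windows. Therefore the first of the three constraints to fail is $\alpha\le\frac{3\pi}{4}$. Set $T_1$ to be the first time with $\alpha(T_1)=\frac{3\pi}{4}$; it satisfies $T_1\le\frac{\pi}{4K}$.

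\textbf{Step 4 (conclusion).} On $(0,T_1]$ we have $\alpha\in(\frac{\pi}{2},\frac{3\pi}{4}]$ and $\sin2\theta\ge c_0>0$, which gives the signs of $\xi'$ and $\theta'$ stated above. At $T_1$ we get $\xi'/\theta'=\cot\frac{3\pi}{4}=-1$.

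\textbf{Main obstacle.} The only delicate point is making the bootstrap in Step 3 rigorous: $\alpha'$ must dominate uniformly while $\theta$ stays away from $0$ and $\frac{\pi}{2}$, where the $\cot\theta$ and $\tan\theta$ terms of the original system would blow up. This is handled by the smallness of $T_1$ together with the unit speed bound on $(\xi,\theta)$. The sign of $\lambda$ enters only in Step 2, where $\lambda\ge0$ lets us discard the $\lambda$-term.
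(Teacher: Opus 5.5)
Your proof is correct. Your reading of the sign condition as $\xi'<0<\theta'$ is the intended one: the printed ``$\theta',\xi'<0$'' is incompatible with $\xi'/\theta'=-1$, and the surrounding lemmas (e.g.\ Lemma~\ref{lem:10-27-4}) use $\xi'<0<\theta'$.

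Your route differs from the paper's. The paper argues globally. It does a case analysis on whether $\alpha$ returns to $\frac{\pi}{2}$, reaches $\frac{3\pi}{2}$, hits $\pi$, or stays in $(\frac{\pi}{2},\pi)$ forever. Using Lemma~\ref{lem:10-25-6}(i),(ii) and Lemma~\ref{lem:10-26-1}(i), it shows that either $\theta'$ vanishes at some time (when $\alpha=\pi$), or $\xi$ must descend to $\max\{S_\lambda,S_{-\lambda}\}$ while $\alpha$ stays in $(\frac{\pi}{2},\frac{3\pi}{2})$. That drop in $\xi$ is large for large $\xi_0$, whereas $\theta$ can vary by less than $\frac{\pi}{2}$. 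So the Cauchy mean value theorem gives a time with $|\xi'/\theta'|>1$, and the intermediate value theorem then gives the first time the ratio equals $-1$.

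Your argument is instead local and quantitative. The bootstrap shows $\alpha'\gtrsim\e^{\frac{4}{g}\xi_0}$ on a fixed window around the initial point, so $\alpha$ sweeps from $\frac{\pi}{2}$ to $\frac{3\pi}{4}$ within time $O(\e^{-\frac{4}{g}\xi_0})$. This is more elementary: it bypasses Lemma~\ref{lem:10-26-1}(i), whose proof invokes the singular initial value result of \cite{Liang}. It also already yields the bound $T_1\lesssim\e^{-\frac{4}{g}\xi_0}$ and the closeness of $\theta(T_1)$ to $\theta^*$. The paper only obtains these afterwards, in Lemma~\ref{lem:10-27-1}, by essentially the same kind of estimate.

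In exchange, the paper's argument does not depend on the sign of $\lambda$ (hence the $\max\{S_\lambda,S_{-\lambda}\}$), whereas your Step 2 drops the $\lambda$-term using $\lambda\ge 0$. That is harmless for Proposition~\ref{prop:10-25-5}. For general $\lambda$ you could bound the term below by $-\sqrt{\frac{2}{g}}\,|\lambda|\,\e^{\frac{2}{g}(\xi_0+\delta)}$, which is still dominated by $\e^{\frac{4}{g}(\xi_0-\delta)}$.
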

\begin{proof}
	Let us treat it in four situations.
	If $\alpha(t_0)=\frac{\pi}{2}$ for some $t_0>0$, we may assume that $t_0$ is the first such time, 
%	at which $\alpha$ equals to $\frac{\pi}{2}$ 
	then $\alpha'(t_0) \le 0$ which implies $\xi(t_0)\le S_\lambda$ by lemma \ref{lem:10-25-6}(i);
	If $\alpha(t_0)=\frac{3\pi}{2}$ for some $t_0>0$, we may assume that $t_0$ is the first such time,
%	at which $\alpha$ equals to $\frac{3\pi}{2}$ 
	then $\alpha'(t_0) \ge 0$ which implies $\xi(t_0)\le S_{-\lambda}$ by lemma \ref{lem:10-25-6}(ii); If $\alpha(t)\in(\frac{\pi}{2},\frac{3\pi}{2})$ for all $t>0$ and there is a time $t_0>0$ at which $\alpha$ equals to $\pi$, then the lemma follows from differential mean value theorem; If $\alpha(t)\in(\frac{\pi}{2},\pi)$ for all $t>0$, then by lemma \ref{lem:10-26-1}\,{\rm (i)} there is a time $t_0\in (0,\infty)$ so that $\xi(t_0)=S_\lambda$.
	
	In any case except the third one, we have that $\xi(t_m)=\max\left\lbrace S_\lambda,S_{-\lambda} \right\rbrace$ for some $t_m>0$.
	Assume $t_m$ is the first such time, then $\alpha(t)\in (\frac{\pi}{2},\frac{3\pi}{2})$ for $t\in(0,t_m]$ by lemma \ref{lem:10-25-6}(i,\,ii).
	Therefore, for $\xi_0$ large enough there will then be some intermediate time $T_1<t_m$ for which $\frac{\theta'(T_1)}{\xi'(T_1)}=-1$ holds by Cauchy differential mean value theorem and intermediate value theorem.
    Let $T_1$ be the first such time then we have $\theta'(t),\,\xi'(t)<0$ for all $t\in (0,T_1]$.
\end{proof}
\begin{lemma}\label{lem:10-27-1}
	There are constants $c_1,\,c_2,\,c_3>0$ so that if $\xi_0$ is large enough one has
	\[
	c_1\,\e^{-\frac{4}{g}\xi_0}\le \theta(T_1)-\theta^*\le c_2\,\e^{-\frac{4}{g}\xi_0},\ \ \ \xi(T_1)\ge\xi_0-c_3\,\e^{-\frac{4}{g}\xi_0}.
	\]
\end{lemma}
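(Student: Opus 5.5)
The plan is to reparametrize the arc $[0,T_1]$ by the angle $\alpha$ and show that it is a short, fast turn of $\alpha$ from $\frac{\pi}{2}$ to $\frac{3\pi}{4}$; the time scale of the turn is governed by the large factor $\e^{\frac{4}{g}\xi}$ in the $\alpha'$ equation of \eqref{eq:10-25-0}. At $T_1$ one has $\xi'/\theta' = \cot\alpha = -1$. So if $\alpha$ increases monotonically on $[0,T_1]$ starting from $\frac{\pi}{2}$, then $T_1$ is the first time $\alpha=\frac{3\pi}{4}$. On $[\frac{\pi}{2},\frac{3\pi}{4}]$ we have $\sin\alpha\ge \frac{\sqrt2}{2}$ and $|\cot\alpha|\le 1$.

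I will run a bootstrap (continuity) argument. Fix a small $\delta>0$ with $[\theta^*-\delta,\theta^*+\delta]\subset(0,\frac{\pi}{2})$. Let $\tau\le T_1$ be maximal such that on $[0,\tau]$ we have $|\theta-\theta^*|\le\delta$, $\xi\ge\xi_0-1$ and $\alpha\in[\frac{\pi}{2},\frac{3\pi}{4}]$. On this range, $\sin 2\theta\ge c>0$, the term $|2\cos\alpha(m_1\cos^2\theta-m_2\sin^2\theta)|$ is bounded by a constant $C$, and the $\lambda$-term is nonnegative because $\lambda\ge0$. Hence
\[
\alpha' \ge \sin2\theta\,\sin\alpha\,\bigl(\e^{\frac{4}{g}\xi}-m\bigr) - C \ge c'\,\e^{\frac{4}{g}\xi_0}
\]
for $\xi_0$ large, and also $\alpha'\le C'\e^{\frac{4}{g}\xi_0}$. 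The upper bound uses $\xi\le\xi_0$, which holds because $\xi'=\sin2\theta\cos\alpha\le0$ there. Note that the $\lambda$-term is of order $\e^{\frac{2}{g}\xi}$, which is dominated by $\e^{\frac{4}{g}\xi}$. In particular $\alpha$ is strictly increasing, so we may use $\alpha$ as the parameter:
\[
\frac{\mathrm d\theta}{\mathrm d\alpha}=\frac{\sin2\theta\sin\alpha}{\alpha'},\qquad \frac{\mathrm d\xi}{\mathrm d\alpha}=\cot\alpha\,\frac{\mathrm d\theta}{\mathrm d\alpha}.
\]

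For the sharper two-sided bound, I will refine the estimate of $\alpha'$ as
\[
\alpha' = \sin2\theta\,\sin\alpha\,\e^{\frac{4}{g}\xi}\bigl(1+O(\e^{-\frac{2}{g}\xi_0})\bigr)+O(1).
\]
This gives
\[
\frac{\mathrm d\theta}{\mathrm d\alpha}=\e^{-\frac{4}{g}\xi}\bigl(1+o(1)\bigr),
\]
which lies between $\frac12\e^{-\frac{4}{g}\xi_0}$ and $2\e^{-\frac{4}{g}(\xi_0-1)}$. Integrating over $\alpha\in[\frac{\pi}{2},\alpha(\tau)]$ gives $0\le\theta-\theta^*\le c_2\e^{-\frac{4}{g}\xi_0}$. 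Since $|\cot\alpha|\le1$, the same integration gives $0\le \xi_0-\xi\le c_3\e^{-\frac{4}{g}\xi_0}$. For $\xi_0$ large, both bounds are strictly better than the bootstrap hypotheses $\delta$ and $1$. Therefore the maximal $\tau$ can only stop because $\alpha$ reaches $\frac{3\pi}{4}$, that is, $\tau=T_1$; this also recovers the sign information of Lemma \ref{lem:10-26-3}. The lower bound $\theta(T_1)-\theta^*\ge c_1\e^{-\frac{4}{g}\xi_0}$ then follows from integrating $\frac{\mathrm d\theta}{\mathrm d\alpha}\ge\frac12\e^{-\frac{4}{g}\xi_0}$ over the full interval $[\frac{\pi}{2},\frac{3\pi}{4}]$, which has length $\frac{\pi}{4}$.

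The main obstacle is making the bootstrap close cleanly, and in particular the two-sided control of $\alpha'$. The lower-order terms must not spoil the positivity of $\alpha'$ or the comparison $\frac{\mathrm d\theta}{\mathrm d\alpha}\asymp \e^{-\frac{4}{g}\xi_0}$; this requires the uniform bound on $m_1\cot\theta-m_2\tan\theta$ near $\theta^*$ (it actually vanishes at $\theta^*$) and the sign $\lambda\ge0$. I will also double-check the orientation convention, since the signs in Lemma \ref{lem:10-26-3} should be read along the turn $\alpha\in(\frac{\pi}{2},\frac{3\pi}{4})$. Along this turn $\xi$ decreases while $\theta$ moves away from $\theta^*$, which is exactly the displacement measured in the statement.
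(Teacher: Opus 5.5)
Your argument is correct and is essentially the paper's: both integrate two-sided bounds $\alpha'\asymp \e^{\frac{4}{g}\xi_0}$ across the turn of $\alpha$ from $\frac{\pi}{2}$ to $\frac{3\pi}{4}$. The paper does this in the unit-speed reparametrization by bounding $T_1$ above and below, which amounts to your $\frac{\mathrm d\theta}{\mathrm d\alpha}$ computation. The difference is only bookkeeping: instead of your bootstrap, the paper takes $T_1$ and its sign information from Lemma \ref{lem:10-26-3} and gets the a priori bound $\xi(T_1)\ge\xi_0-\frac{\pi}{2}+\theta^*$ from $|\xi'|\le\theta'$ on $[0,T_1]$; it then uses the resulting smallness of $\theta(T_1)-\theta^*$ to control the $m_1\cot\theta-m_2\tan\theta$ term in the upper bound on $\alpha'$.
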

\begin{proof}
	Let us first note that $\theta(T_1)\le\frac{\pi}{2}$ by the well definedness, whence by the differential mean value theorem $\xi_0-\xi(T_1)\le \frac{\pi}{2}-\theta^*$, namely, $\xi(T_1)\ge\xi_0-\frac{\pi}{2}+\theta^*$.
	For the proof of this lemma, it is more convenient to work with the following system of ODEs:
	\[
	\left\lbrace
	\begin{aligned}
	\xi'&= \cos\alpha, \\
	\theta'&= \sin\alpha, \\
	\alpha'&= \sin\alpha\,(\e^{\frac{4}{g}\xi} -m) + \cos\alpha\,(m_1\cot\theta - m_2\tan\theta)+\sqrt{\frac{2}{g}}\,\lambda\,\e^{\frac{2}{g}\xi}. 
	\end{aligned}
	\right.
	\]
	which one can get from \eqref{eq:10-25-0} by a reparametrization of time.
	We still use $T_1$ to denote the time at which $\frac{\xi'(T_1)}{\theta'(T_1)}=-1$ in this new ODE without confusion.
	One then gets for all $t\in[0,T_1]$:
	\[
	\cos\alpha(t)\in[-\frac{1}{\sqrt{2}},0],\ \ \ \sin\alpha(t)\in[\frac{1}{\sqrt{2}},1],
	\]
	which implies:
	\[
	\xi_0-\xi(T_1)\in[0,\frac{1}{\sqrt{2}}T_1],\ \ \ \theta(T_1)-\theta^*\in[\frac{1}{\sqrt{2}}T_1,T_1].
	\]
	It suffices to prove that $T_1$ has suitable upper and lower bounds.
	Noting that for $t\in[0,T_1]$ one has
	\[
	\alpha'(t)\ge\frac{1}{\sqrt{2}}\left(\e^{\frac{4}{g}\xi(T_1)} -m\right) -\sqrt{\frac{2}{g}}|\lambda|\,\e^{\frac{2}{g}\xi_0}
	\]
	and 
	\[
	\alpha'(t)\le\e^{\frac{4}{g}\xi_0}-\frac{\sqrt{2}}{2}(m_1\cot(\theta(T_1)) - m_2\tan(\theta(T_1)))+\sqrt{\frac{2}{g}}|\lambda|\,\e^{\frac{2}{g}\xi_0}.
	\]
	Integrating the first inequality from 0 to $T_1$ yields:
	\[
	\frac{\pi}{4}\ge T_1\left( \frac{1}{\sqrt{2}}\left(\e^{\frac{4}{g}\xi(T_1)} -m\right) -\sqrt{\frac{2}{g}}|\lambda|\,\e^{\frac{2}{g}\xi_0}\right). 
	\]
	Together with $\xi(T_1)\ge\xi_0-\frac{\pi}{2}+\theta^*$ one gets that $T_1\le d_1\,\e^{-\frac{4}{g}\xi_0}$ for an appropriate constant $d_1>0$, which implies that $\xi_0-\xi(T_1)\le c_3\,\e^{-\frac{4}{g}\xi_0}$ and $\theta(T_1)-\theta^*\le c_2\,\e^{-\frac{4}{g}\xi_0}$ for appropriate constants $c_2,\,c_3>0$.
	
	Combining $\theta(T_1)-\theta^*\le c_2\,\e^{-\frac{4}{g}\xi_0}$ with $m_1\cot(\theta^*) - m_2\tan(\theta^*)=0$ gives for $\xi_0$ large enough that $-m_1\cot(\theta(T_1)) + m_2\tan(\theta(T_1))<1$.
	Integrating the other inequality for $\alpha'(t)$ from 0 to $T_1$ then gives
	\[
	\frac{\pi}{4}\le T_1\left(\e^{\frac{4}{g}\xi_0}+\frac{\sqrt{2}}{2}+\sqrt{\frac{2}{g}}|\lambda|\,\e^{\frac{2}{g}\xi_0} \right) 
	\]
	which implies that $T_1\ge d_2\,\e^{-\frac{4}{g}\xi_0}$ for another appropriate constant $d_2>0$ provided $\xi_0$ is large enough.
	Now the final bound of the lemma follows and the proof is finished.
\end{proof}
\begin{lemma}\label{lem:10-27-2}
	Let $\lambda\ge 0$, then for $\xi_0$ large enough there is a time $T_2>T_1$ so that $\theta'(T_2)=0,\,\xi(T_2)>\xi_0-\frac{1}{\xi_0}$ while $\theta(t)\in\theta^*+ (0,\frac{1}{\xi_0})$ and $\xi'(t)<0$ for all $t\in(0,T_2]$.
\end{lemma}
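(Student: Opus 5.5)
After time $T_1$ the plan is to keep working in the reparametrized system used in the proof of Lemma \ref{lem:10-27-1}, where $\xi'=\cos\alpha$, $\theta'=\sin\alpha$, so that arc length in the $(\xi,\theta)$-plane is the time variable. At $T_1$ we have $\alpha(T_1)=\frac{3\pi}{4}$. We also know $\theta(T_1)-\theta^*\in[c_1,c_2]\,\e^{-\frac{4}{g}\xi_0}$ and $\xi(T_1)\ge \xi_0-c_3\e^{-\frac4g\xi_0}$. The aim is to show that $\alpha$ keeps increasing at rate of order $\e^{\frac4g\xi_0}$ until it reaches $\pi$, at a time $T_2$. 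This increase takes time of order $\e^{-\frac4g\xi_0}$, so $\theta'(T_2)=\sin\alpha(T_2)=0$. During $[T_1,T_2]$ the curve moves only by $O(\e^{-\frac4g\xi_0})$, which is much smaller than $\frac{1}{\xi_0}$.

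Concretely, define $T_2$ as the first time after $T_1$ with $\alpha=\pi$, or with $\theta-\theta^*=\frac1{\xi_0}$, or with $\xi=\xi_0-\frac1{\xi_0}$, whichever comes first. We must show it is the first event. On $[T_1,T_2]$ we have $\alpha\in[\frac{3\pi}{4},\pi]$, so $\sin\alpha\ge 0$ and $\cos\alpha\le -\frac{1}{\sqrt2}$.

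The key is a lower bound on $\alpha'$. Since $\lambda\ge0$, the last term $\sqrt{2/g}\,\lambda\,\e^{\frac2g\xi}$ is nonnegative and can be dropped; this is exactly where $\lambda\ge 0$ enters. The curvature term $\cos\alpha\,(m_1\cot\theta-m_2\tan\theta)$ is nonnegative while $\theta>\theta^*$, because $\cos\alpha<0$ and $m_1\cot\theta-m_2\tan\theta<0$ for $\theta>\theta^*$. The first term $\sin\alpha\,(\e^{\frac4g\xi}-m)$ is nonnegative but degenerates as $\alpha\to\pi$.

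So I would not rely on $\sin\alpha$ alone. Instead I would use the curvature term: with $\theta-\theta^*\ge \theta(T_1)-\theta^*\ge c_1\e^{-\frac4g\xi_0}$ (recall $\theta$ is nondecreasing here), it gives $\alpha'\gtrsim c\,\e^{-\frac4g\xi_0}$, which is too weak. The better route is to combine both terms. While $\alpha\le\frac{7\pi}{8}$, the bound $\sin\alpha\ge\sin\frac\pi8$ gives $\alpha'\ge c\,\e^{\frac4g\xi_0}$, so this phase lasts $O(\e^{-\frac4g\xi_0})$. For $\alpha\in[\frac{7\pi}8,\pi)$, $\alpha$ is increasing and $\theta$ is still increasing. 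The worry is that $\alpha$ approaches $\pi$ only asymptotically.

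To rule that out, set $\beta=\pi-\alpha$. Then $\theta'=\sin\beta\approx\beta$ and $\beta'\le-\beta(\e^{\frac4g\xi}-m)-c(\theta-\theta^*)$. The linear part alone would make $\beta$ decay exponentially at rate $\e^{\frac4g\xi_0}$. The forcing $-c(\theta-\theta^*)\le -c\,c_1\e^{-\frac4g\xi_0}$ is a strictly negative constant, so $\beta$ hits $0$ in finite time. That time is at most $O(\e^{-\frac4g\xi_0}\cdot\xi_0)$, since logarithmic decay from $\beta\sim 1$ down to $\beta\sim \e^{-\frac8g\xi_0}$ takes $O(\xi_0\e^{-\frac4g\xi_0})$, after which the forcing finishes it. Because the speed is $1$, the total displacement in $\xi$ and $\theta$ over $[0,T_2]$ is then $O(\xi_0\e^{-\frac4g\xi_0})<\frac1{\xi_0}$ for $\xi_0$ large. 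This excludes the other two stopping events, and $\xi'=\cos\alpha<0$ throughout.

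The main obstacle is this final approach $\alpha\to\pi$, i.e. getting a quantitative finite hitting time despite the $\sin\alpha$ degeneracy. The comparison ODE for $\beta$ described above is the tool I would use first.
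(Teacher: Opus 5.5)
Your proposal is correct and follows essentially the paper's route. The paper likewise drops the $\lambda$-term using $\lambda\ge0$, then uses the $\sin\alpha\,(\e^{\frac4g\xi}-m)$ term to get $\alpha'>A\,\e^{\frac4g(\xi_0-\frac1{\xi_0})}(\pi-\alpha)$, which brings $\alpha$ within $\e^{-\xi_0^2}$ of $\pi$, and then uses the curvature term with $\theta-\theta^*\ge c_1\e^{-\frac4g\xi_0}$ to get $\alpha'>B\,\e^{-\frac4g\xi_0}$, pushing $\alpha$ to $\pi$ before time $\frac1{\xi_0}$. You merge these two phases into one comparison inequality $\beta'\le -K\beta-\epsilon$ for $\beta=\pi-\alpha$, which gives the sharper hitting time $O(\xi_0\e^{-\frac4g\xi_0})$ and, with your stopping time, handles the bootstrap more cleanly than the paper's crude bound $\frac1{\xi_0}$ combined with $|\theta'|,|\xi'|\le 1$.
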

\begin{proof}
	Let us begin by noting that $T_1<\frac{1}{4\xi_0}$ and  $\xi'(t)<0$ for $t\in (0,\frac{1}{\xi_0})$ provided $\xi_0$ is large enough, where the former follows from lemma \ref{lem:10-27-1} and the latter from lemma \ref{lem:10-25-6}(i,\,ii) since $\xi(t)-\xi_0=\int_{0}^{t}\sin 2\theta\cos\alpha\mathrm dt>-\frac{1}{\xi_0}$.
	Further as long as $\theta>\theta^*$ and $\alpha\in(\frac{\pi}{2},\pi)$, one has for $t\in (0,\frac{1}{\xi_0})$ and $\xi_0$ large enough that
	\[
	\alpha'= \sin 2\theta\sin\alpha\,(\e^{\frac{4}{g}\xi} -m) + 2\cos\alpha\,(m_1\cos^2\theta - m_2\sin^2\theta)+\sqrt{\frac{2}{g}}\,\lambda\,\sin 2\theta\,\e^{\frac{2}{g}\xi}
	\]
	is a sum of three positive(non-negative) terms and so $\alpha$ is increasing,
	where $\lambda\ge0$ were used.
	Recalling that by definition $\alpha(T_1)=\frac{3\pi}{4}$ and assuming $\alpha(t)<\pi$ for all $t\in(T_1,\frac{1}{\xi_0})$ yields
	\[
	\alpha'(t)>A\e^{\frac{4}{g}(\xi_0-\frac{1}{\xi_0})}(\pi-\alpha)
	\]
	where $A>0$ is some constant.
	For this estimate, we used that $\theta(t)$ is bounded away from $\left\lbrace 0,\frac{\pi}{2} \right\rbrace $ for $t\in (0,\frac{1}{\xi_0})$, which follows from $|\theta'(t)|\le 1$.
	From the intermediate value theorem, one then gets a $\tilde{t}\in(T_1,\frac{1}{2\xi_0})$ so that
	\[
	\alpha(\tilde{t})=\pi-\frac{\pi}{4}\exp\left(-A \e^{\frac{4}{g}(\xi_0-\frac{1}{\xi_0})}\frac{1}{4\xi_0}\right). 
	\]
	For $\xi_0$ large enough, one then finds $\alpha(\tilde{t})>\pi-\e^{-\xi_0^2}$.
	Now $\theta(T_1)>\theta^*+c_1\e^{-\frac{4}{g}\xi_0}$ from lemma \ref{lem:10-27-1}, so $l(\theta(T_1))\le \frac{1}{2}c_1l'(\theta^*)\e^{-\frac{4}{g}\xi_0}$ for $\xi_0$ large enough. 
	By assumption $\theta(\tilde{t})>\theta(T_1)$, so another estimate yields for $t\in (\tilde{t},\frac{1}{\xi_0})$:
	\[
	\alpha'(t)>B\,\e^{-\frac{4}{g}\xi_0}
	\]
	where $B>0$ is some constant incorporating the $\cos\alpha$ term (which is close to -1) and  $ c_1l'(\theta^*)$ (which is bounded away from 0). 
	This then yields
	\[
	\alpha(\tilde{t}+\frac{1}{2\xi_0})>\pi-\e^{-\xi_0^2}+B\,\e^{-\frac{4}{g}\xi_0}\frac{1}{2\xi_0}
	\]
	which is larger than $\pi$, contradicting our assumption that $\alpha(t)<\pi$ for all $t\in (T_1,\frac{1}{\xi_0})$. 
	Hence there is a time $T_2\in(T_1,\frac{1}{\xi_0}) $ so that $\alpha(T_2)=\pi$. 
	Let $T_2$ be the first such time then the proof completes. 
\end{proof}
\begin{lemma}\label{lem:10-27-3}
	Let $\lambda\ge 0$, then $\xi_0$ is of type 1 if $\xi_0$ is large enough.
\end{lemma}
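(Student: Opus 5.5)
From Lemma \ref{lem:10-27-2} we have a time $T_2\in(0,\frac{1}{\xi_0})$ at which $\alpha(T_2)=\pi$. At that time $\theta(T_2)\in\theta^*+(0,\frac{1}{\xi_0})$, $\xi(T_2)>\xi_0-\frac{1}{\xi_0}$, and $\xi'<0$ on $(0,T_2]$. The plan is to follow the trajectory beyond $T_2$ while $\alpha\in(\pi,\frac{3\pi}{2})$. In this range $\theta'<0$ and $\xi'<0$, so the curve moves back toward $\theta^*$ and we expect it to reach $\theta=\theta^*$ before $\xi'$ vanishes. That is exactly the statement that $\xi_0$ is of type 1.

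\textbf{Step 1: $\alpha$ keeps increasing past $\pi$ while $\theta>\theta^*$.} At $T_2$ we have $\alpha=\pi$ and $\theta>\theta^*$, so Lemma \ref{lem:10-25-6}(iv) gives $\alpha'(T_2)>0$. Hence $\alpha\in(\pi,\pi+\varepsilon)$ just after $T_2$, and there $\theta'<0$ and $\xi'<0$.

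\textbf{Step 2: $\alpha$ cannot reach $\frac{3\pi}{2}$ while $\theta>\theta^*$.} Let $t_0$ be the first time after $T_2$ with $\alpha(t_0)=\frac{3\pi}{2}$. Then $\alpha'(t_0)\ge0$, so Lemma \ref{lem:10-25-6}(ii) forces $\xi(t_0)\le S_{-\lambda}$. We must rule this out as long as $\theta\in(\theta^*,\theta^*+\frac{1}{\xi_0})$.

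The route is to show that $\theta$ falls back to $\theta^*$ within time $O(\frac{1}{\xi_0})$, during which $\xi$ moves by at most $O(\frac{1}{\xi_0})$ because $|\xi'|\le 1$. Then $\xi$ stays near $\xi_0\gg S_{-\lambda}$, and the bad case cannot occur.

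Here is why the descent is fast. For $\alpha\in(\pi,\frac{3\pi}{2})$, large $\xi$ and $\lambda\ge0$, the term $\sin2\theta\sin\alpha(\e^{\frac{4}{g}\xi}-m)$ is strongly negative. The $\lambda$ term is of lower order, $O(\e^{\frac{2}{g}\xi})$. So the dominant balance drives $\alpha$ back toward $\pi$, the way it drove $\alpha$ toward $\pi$ from below in Lemma \ref{lem:10-27-2}. This balance is against the term $2\cos\alpha(m_1\cos^2\theta-m_2\sin^2\theta)$, which is positive when $\theta>\theta^*$ and $\cos\alpha<0$, and is of size $\sim(\theta-\theta^*)$.

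The consequence is that $\alpha$ stays trapped in $(\pi,\pi+C\e^{-\frac{4}{g}\xi_0}\frac{1}{\xi_0})$ or so. In particular $\alpha$ is nowhere near $\frac{3\pi}{2}$, which keeps $\cos\alpha$ near $-1$, and $\theta$ decreases at a rate comparable to $\sin\alpha\approx-(\alpha-\pi)$.

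\textbf{Main obstacle.} The danger is that this rate is too small, so that $\theta$ creeps toward $\theta^*$ only asymptotically while $\xi$ decreases. The first tool to try is Lemma \ref{lem:10-26-2}, second part: if $\theta>\theta^*$ and $\theta',\xi'<0$ for all $t\ge t_0$, then $\theta$ reaches $\theta^*$ in finite time. This disposes of the case in which $\alpha$ stays in $(\pi,\frac{3\pi}{2})$ forever.

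What remains is to exclude $\alpha$ hitting $\frac{3\pi}{2}$ with $\theta>\theta^*$. By the above this requires $\xi(t_0)\le S_{-\lambda}$. We would then argue that before $\xi$ can drop from about $\xi_0$ to $S_{-\lambda}$, the variable $\alpha$ must become large (near $\frac{3\pi}{2}$) only after $\theta$ has already crossed $\theta^*$. The estimate to make this rigorous is a comparison showing that, while $\alpha-\pi$ is small, $\theta$ decreases at least as fast as $\xi$ decreases, up to a factor. Since $\theta-\theta^*<\frac{1}{\xi_0}$ but $\xi-S_{-\lambda}\approx\xi_0$, $\theta$ reaches $\theta^*$ first.

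Concretely, $\frac{d\theta}{d\xi}=\tan\alpha$, and the goal is to bound $\tan\alpha$ below. If $\tan\alpha$ stays tiny, then $\alpha'\approx(\e^{\frac{4}{g}\xi}-m)\sin2\theta\sin\alpha+\text{(positive term)}$ keeps $\alpha-\pi$ at the equilibrium level $\sim\frac{\theta-\theta^*}{\e^{\frac{4}{g}\xi}}$. Integrating $\frac{d\theta}{d\xi}$ then gives exponential decay of $\theta-\theta^*$ in $\xi$, with no finite crossing. This is the point where the argument is genuinely delicate.

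I would resolve it by noting that $\xi$ keeps decreasing as long as $\cos\alpha<0$. Once $\e^{\frac{4}{g}\xi}$ drops to order $m$, the restoring term no longer dominates, and $\alpha$ increases toward $\frac{3\pi}{2}$ through the positive term. The needed fact is that this happens while $\xi>S_{-\lambda}$, i.e.\ while $\e^{\frac{4}{g}\xi}-m+\lambda\sqrt{2/g}\,\e^{\frac{2}{g}\xi}>0$. In that regime the trajectory still satisfies the hypotheses of Lemma \ref{lem:10-26-2}, so $\theta$ reaches $\theta^*$ at some time $T$ with $\xi'\neq0$ on $(0,T)$. That is precisely type 1.
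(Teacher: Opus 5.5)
\textbf{There is a genuine gap: your proposal never rules out type 2.}

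Your handling of the type~3 alternative is fine and matches the paper: Lemma~\ref{lem:10-25-6}(iv) keeps $\alpha\in(\pi,\frac{3\pi}{2})$ after $T_2$, and then the second part of Lemma~\ref{lem:10-26-2} applies. You also reduce the type~2 case correctly. If $\alpha$ first reaches $\frac{3\pi}{2}$ at some $t_0>T_2$ while $\theta>\theta^*$, then $\xi(t_0)\le S_{-\lambda}$.

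The failure is in excluding that scenario. Your forward analysis stalls exactly where you say it does: in the large-$\xi$ regime, $\alpha-\pi$ sits at a quasi-equilibrium and gives no finite crossing. Your proposed resolution does not close the gap.
\begin{itemize}
\item The sentence ``the needed fact is that this happens while $\xi>S_{-\lambda}$'' is an assumption, not an argument.
\item Lemma~\ref{lem:10-26-2} cannot be invoked there. Its hypothesis $\theta',\xi'<0$ for \emph{all} $t\ge t_0$ is precisely what fails if $\xi'$ vanishes at a finite time, so using it to rule out such a time is circular.
\item Minor: $\xi>S_{-\lambda}$ means $\e^{\frac{4}{g}\xi}-m-\lambda\sqrt{2/g}\,\e^{\frac{2}{g}\xi}>0$, with a minus sign, not a plus.
\end{itemize}

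\textbf{The missing idea is to argue backward from the hypothetical type~2 time $T_3$ rather than forward from $T_2$.}
\begin{itemize}
\item At $T_3$ you have $\alpha=\frac{3\pi}{2}$, so $\xi(T_3)\le S_{-\lambda}$.
\item Since $\xi(T_2)>\xi_0-\frac{1}{\xi_0}$ and $|\xi'|\le 1$, we get $T_3-T_2\to\infty$ as $\xi_0\to\infty$. Hence $[T_3-1,T_3]\subset(T_2,T_3]$, and on this interval $\xi\le S_{-\lambda}+1$.
\item Therefore $\alpha'$ and $\theta''=2\cos2\theta\,\theta'\sin\alpha+\sin2\theta\cos\alpha\,\alpha'$ are bounded on $[T_3-1,T_3]$ independently of $\xi_0$.
\item Since $\theta(T_3)\in[\theta^*,\theta^*+\frac{1}{\xi_0})$ is bounded away from $0$ and $\frac{\pi}{2}$, we have $\theta'(T_3)=-\sin2\theta(T_3)\le-2\delta$.
\item The uniform bound on $\theta''$ then yields $b\in(0,1)$, independent of $\xi_0$, with $\theta'<-\delta$ on $[T_3-b,T_3]$. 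So $\theta$ drops by at least $\delta b$ on that interval.
\item But on $(T_2,T_3)$, $\theta$ stays in the window $(\theta^*,\theta^*+\frac{1}{\xi_0})$. This is impossible once $\xi_0>\frac{1}{\delta b}$.
\end{itemize}

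This argument sidesteps the large-$\xi$ asymptotics entirely. You never need to know where or how fast $\theta$ returns to $\theta^*$. You only need that $\alpha$ cannot reach $\frac{3\pi}{2}$ inside a $\theta$-window of width $\frac{1}{\xi_0}$. Near such a time, $|\theta'|$ is of order one on an order-one time scale, because $\xi$ is then bounded.
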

\begin{proof}
	If $\xi_0$ is large enough by lemma \ref{lem:10-27-2}  there is a $T_2>0$ for which $\theta'(T_2)=0$.
	Note that $\alpha(T_2)=\pi,\,\theta(T_2)>\theta^*$ and then $\alpha'(T_2)>0$ by \eqref{eq:10-25-0} and $\lambda\ge0$.
	We first show that $\xi_0$ is not type 3 by contradiction.
	Suppose $\xi_0$ is of type 3, namely, $\alpha(t)\in (\frac{\pi}{2},\frac{3\pi}{2})$ and $\theta(t)\ne \theta^*$ for all $t>0$.
	Hence $\alpha(t)\in(\pi,\frac{3\pi}{2})$ for all $t>T_2$ by lemma \ref{lem:10-25-6}(iv), then it follows from lemma \ref{lem:10-26-2} that there is a time $T>T_2$ so that $\theta(T)=\theta^*$, which is a contradiction.
	
	We now assume $\xi_0$ is not type 1, so it must be type 2. Hence there is $T_3>T_2$ such that $\xi'(T_3)=0$, $\theta(t)>\theta^*$ and $\xi'(t)<0$ for all $t\in (0,T_3)$,
	which then follows from lemma \ref{lem:10-25-6}\,{\rm (iv)} that $\theta'(t)<0$ for $t\in (T_2,T_3)$.
	It follows that $\alpha(T_3)=\frac{3\pi}{2}$ and $\alpha'(T_3)\ge 0$, which implies $\xi(T_3)\le S_{-\lambda}$ by lemma \ref{lem:10-25-6}(ii).
	With $\xi(T_2)\ge \xi_0-\frac{1}{\xi_0}$ and $|\xi'(t)|\le 1$, one gets that $T_3-T_2$ will become arbitrarily large as $\xi_0$ grows.
	And so, for $\xi_0$ large enough, one sees that $T_3-1>T_2$ and $\xi(t)\le S_{-\lambda}+1$ for all $t\in [T_3-1,T_3]$.
	Hence, by \eqref{eq:10-25-0}, in this interval $\alpha'(t)$ and $\theta''(t)$ admit bounds independent of $\xi_0$.
	Since $\theta(T_3)<\theta(T_2)\le\theta^*+\frac{1}{\xi_0}$, one gets $\theta'(T_3)=-\sin 2\theta(T_3)\le -2\delta$ for some $\delta>0$ and $\xi_0$ large enough.
	Then the bound on $\theta''(t)$ gives a positive $b<1$ independent of $\xi_0$ so that $\theta'(t)<-\delta$ for all $t\in[T_3-b,T_3]$.
	
	But if $\xi_0$ is large enough, one notes that $\theta\in[\theta^*,\theta^*+\frac{1}{\xi_0}]$ and $\theta'(t)<-\delta$ cannot both simultaneously hold for all $t\in[T_3-b,T_3]$.
	This contradiction completes the proof. 
\end{proof} 
%%%%%%%%%%%%%%%%%%%%%%%%%%%%%%%%%%%%%%%%%%%%%%%%%%%%%%%%%%%%%%%%%%%%%%%%%%%%%%%%%%%%%%%%%%%%%%%%%%%%%%%%%%%%%%%%%%%%%%%%%%%%%%%%%%%%%%%%%%%%%%%%%%%%%%%%%%%%%%%%%%%%%%%%%%%%%%%%%%
\subsection{Proof of Proposition \ref{prop:10-25-5}(ii)} 
In what follows $\xi,\,\theta$ and $\alpha$ will denote the components of the solution of \eqref{eq:10-25-0} with initial condition $(\xi,\theta,\alpha)(0)=(S_\lambda+\varepsilon,\theta^*,\frac{\pi}{2})$ for positive $\varepsilon$,
and \(H(\theta)\) will denote \(\frac{1}{2}(m_1\cot\theta-m_2\tan\theta)\).
\begin{lemma}\label{lem:10-27-4}
	If $\xi_0=S_\lambda+\varepsilon$ is type 1, then there are $T_2(\varepsilon)>T_1(\varepsilon)>0$ so that $\theta'(T_2)=0$ and $\frac{\xi'(T_1)}{\theta'(T_1)}=-1$ while $\xi'(t)<0$ and $\theta'(t)>0$ for all $t\in(0,T_2)$.
\end{lemma}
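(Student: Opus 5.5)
We first fix the initial direction. At $t=0$ we have $\alpha(0)=\frac{\pi}{2}$ and $\xi_0=S_\lambda+\varepsilon>S_\lambda$, so Lemma \ref{lem:10-25-6}(i) gives $\alpha'(0)>0$. Hence $\alpha(t)\in(\frac{\pi}{2},\pi)$ for small $t>0$, which means $\xi'<0$ and $\theta'>0$ there.

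Let $T_2$ be the first time $t>0$ with $\alpha(t)\in\{\frac{\pi}{2},\pi\}$. On $(0,T_2)$ we then have $\xi'<0$ and $\theta'>0$, and $\theta>\theta^*$ since $\theta$ is increasing. The goal is to show that $T_2$ is finite and that $\alpha(T_2)=\pi$.

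\emph{Step 1: $\alpha$ cannot return to $\frac{\pi}{2}$ first.} Suppose $\alpha(T_2)=\frac{\pi}{2}$. Then $\xi'(T_2)=0$, and since $\theta$ has been increasing on $(0,T_2]$, we have $\theta(t)>\theta^*$ on $(0,T_2]$. So $\xi_0$ satisfies Definition \ref{def:10-25-4}(ii) at time $T_2$. It does not satisfy (i): any $T$ with $\theta(T)=\theta^*$ must exceed $T_2$, and then $\xi'(T_2)=0$ with $T_2\in(0,T)$ violates the condition $\xi'\neq0$ on $(0,T)$. This contradicts the assumption that $\xi_0$ is type 1.

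\emph{Step 2: $T_2<\infty$.} Suppose instead that $\alpha(t)\in(\frac{\pi}{2},\pi)$ for all $t>0$. Then $\theta'>0$ for all $t>0$, and Lemma \ref{lem:10-26-1}(i) yields a time $t_0$ with $\xi(t_0)=S_\lambda$. Running the argument of Lemma \ref{lem:10-26-1} further, $\theta$ increases and converges, and $\xi$ decreases. I would then split into two cases.
\begin{itemize}
\item If $\xi$ stays bounded below, the proof of Lemma \ref{lem:10-26-1} forces $\theta\to\frac{\pi}{2}$ and $\alpha\to\frac{\pi}{2}$. In that proof the limit value $\xi(\frac{\pi}{2})$ was assumed $\ge S_\lambda$; here $\xi(\frac{\pi}{2})<S_\lambda$, so that contradiction does not apply directly. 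Instead I would use that $\alpha$ then tends to $\frac{\pi}{2}$ from above, i.e.\ $\alpha$ must decrease while $\xi<S_\lambda$. Near $\alpha=\frac{\pi}{2}$ the $\sin\alpha(\e^{\frac{4}{g}\xi}-m)$ term together with the $\lambda$ term is negative and of definite size, so $\alpha$ would be pushed below $\frac{\pi}{2}$ in finite time. This contradicts the standing assumption $\alpha\in(\frac{\pi}{2},\pi)$ for all $t>0$ (it is not a Step 1 contradiction).
\item If $\xi\to-\infty$, then $\theta$ is still increasing and bounded, so it converges to some limit in $(\theta^*,\frac{\pi}{2}]$. With $\xi\to-\infty$ and $\theta'\to0$, a bound on $\alpha''$ should force $\sin\alpha\sin2\theta\to0$. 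Whether $\theta$ can stall strictly inside $(\theta^*,\frac{\pi}{2})$ is unclear to me.
\end{itemize}
This is the main obstacle: excluding the asymptotic behaviour in which $\alpha$ never reaches $\frac{\pi}{2}$ or $\pi$. I would try to handle it with a sign analysis of $\alpha'$ using $H(\theta)<0$ for $\theta>\theta^*$ together with $\cos\alpha<0$, which makes the middle term $2\cos\alpha(\ldots)$ positive.

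\emph{Step 3: the time $T_1$.} Granting $\alpha(T_2)=\pi$, the function $\alpha$ is continuous on $[0,T_2]$ with $\alpha(0)=\frac{\pi}{2}$ and $\alpha(T_2)=\pi$. By the intermediate value theorem it passes through $\frac{3\pi}{4}$; take $T_1$ to be the first such time. At $T_1$ we have $\frac{\xi'}{\theta'}=\cot\frac{3\pi}{4}=-1$, and $0<T_1<T_2$. Finally $\theta'(T_2)=\sin2\theta\sin\pi=0$, and the sign conditions $\xi'<0$, $\theta'>0$ on $(0,T_2)$ hold by the definition of $T_2$.
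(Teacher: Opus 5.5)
Your Steps 1 and 3 are correct, but Step 2 (that $\alpha$ actually leaves $(\frac{\pi}{2},\pi)$ in finite time) is a genuine gap, and the dynamical route you sketch cannot close it. With $\xi_0>S_\lambda$, the scenario $\alpha(t)\in(\frac{\pi}{2},\pi)$ for all $t>0$ is exactly type-3 behaviour (compare the argument for Lemma \ref{lem:10-27-9}(i)), and the equations do not rule it out locally.

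In your first bullet, both terms you rely on, $\sin 2\theta\sin\alpha\,(\e^{\frac{4}{g}\xi}-m)$ and $\sqrt{2/g}\,\lambda\sin 2\theta\,\e^{\frac{2}{g}\xi}$, carry the factor $\sin 2\theta\to 0$. So they have no definite size as $\theta\to\frac{\pi}{2}$, while the middle term is $\approx -2m_2\cos\alpha>0$. Indeed, take a curve meeting $\theta=\frac{\pi}{2}$ orthogonally at a height $\xi(\frac{\pi}{2})<S_\lambda$. It has $\frac{\mathrm d\xi}{\mathrm d\theta}\approx-\xi''(\frac{\pi}{2})(\frac{\pi}{2}-\theta)$ with $\xi''(\frac{\pi}{2})=-\frac{1}{1+m_2}\bigl(\e^{\frac{4}{g}\xi}-m+\lambda\sqrt{2/g}\,\e^{\frac{2}{g}\xi}\bigr)>0$. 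Such a curve is consistent with the ODE; it is the endpoint behaviour analysed in the proof of Lemma \ref{lem:10-26-1}. Along it, $\alpha\to\frac{\pi}{2}$ from above without ever reaching $\frac{\pi}{2}$, so no finite-time push below $\frac{\pi}{2}$ occurs. The paper needs the global argument of Lemmas \ref{lem:10-27-9} and \ref{lem:10-27-10} to exclude this behaviour even for the single value $\xi_0^*$. Your second bullet is, as you say yourself, left open.

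The missing idea is to use the type 1 hypothesis once more, exactly as in your Step 1. If $\alpha(t)\in(\frac{\pi}{2},\pi)$ for all $t>0$, then $\theta'>0$ on $(0,\infty)$. Hence $\theta(t)>\theta^*$ for every $t>0$, so no $T>0$ with $\theta(T)=\theta^*$ exists, contradicting type 1.

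This is the paper's one-line proof. Take the first return time $T$ of $\theta$ to $\theta^*$. The condition $\xi'\neq 0$ on $(0,T)$ forces $\alpha\in(\frac{\pi}{2},\frac{3\pi}{2})$ there. Rolle's theorem applied to $\theta(0)=\theta(T)=\theta^*$ then gives a zero of $\theta'$ in $(0,T)$, i.e.\ a time with $\alpha=\pi$. With this replacement for Step 2, your argument is complete.
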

\begin{proof}
	Assuming that $\xi_0$ is of type 1 means that there is a time $T>0$ for which $\theta(T)=\theta^*$ and $\alpha(t)\in(\frac{\pi}{2},\frac{3\pi}{2})$, $\theta(t)>\theta^*$ for all $t\in(0,T)$, then the lemma follows from differential mean value theorem.
\end{proof}
\begin{lemma}\label{lem:10-27-5}
	Let $\lambda\ge 0$, then for $\varepsilon$ small enough there is only one pair $(T_1,T_2)$ satisfying the conditions of lemma \ref{lem:10-27-4} and $\theta(T_1)\to \frac{\pi}{2}$ as $\varepsilon\to 0$.
\end{lemma}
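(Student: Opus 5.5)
We compare the trajectory with the constant solution $\xi\equiv S_\lambda$, $\alpha\equiv\frac{\pi}{2}$ of \eqref{eq:10-25-0}. That solution starts at $(S_\lambda,\theta^*,\frac{\pi}{2})$, moves monotonically up in $\theta$, and satisfies $\theta(t)\to\frac{\pi}{2}$ as $t\to\infty$. The claim that $\theta(T_1)\to\frac{\pi}{2}$ will follow from continuous dependence, Proposition~\ref{prop:10-25-3}(ii), combined with the fact that the limiting solution never turns.

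\emph{Step 1: $T_1$ is pushed to infinity.} Fix $\delta>0$ small and a time $\tau$ such that the constant solution has $\theta(\tau)>\frac{\pi}{2}-\delta$. By Proposition~\ref{prop:10-25-3}(ii), for $\varepsilon$ small the perturbed solution is uniformly close to $(S_\lambda,\theta_{\rm ref}(t),\frac{\pi}{2})$ on $[0,\tau]$. In particular $|\alpha(t)-\frac{\pi}{2}|<\frac{\pi}{8}$ on $[0,\tau]$, so the ratio $\xi'/\theta'=\cot\alpha$ cannot reach $-1$ before time $\tau$. Moreover $\theta(\tau)>\frac{\pi}{2}-2\delta$. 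Since $\theta$ is increasing on $(0,T_2)$, any admissible $T_1$ satisfies $T_1>\tau$, and hence $\theta(T_1)>\frac{\pi}{2}-2\delta$. This already gives $\theta(T_1)\to\frac{\pi}{2}$ for any choice of pair.

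\emph{Step 2: uniqueness of the pair.} The time $T_2$ is by definition the first zero of $\theta'$ after $0$ with $\xi'<0$ beforehand, namely the first time $\alpha=\pi$. I would show that on $(0,T_2)$, where $\alpha\in(\frac{\pi}{2},\pi)$ and $\theta\in(\theta^*,\frac{\pi}{2})$, we have $\alpha'>0$. This gives a unique $T_1$ with $\alpha(T_1)=\frac{3\pi}{4}$, and then $T_2$ is the unique first time with $\alpha=\pi$.

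In the region $\theta>\theta^*$ we have $m_1\cos^2\theta-m_2\sin^2\theta<0$ and $\cos\alpha<0$, so the middle term of $\alpha'$ is positive. The last term is nonnegative since $\lambda\ge0$. The first term $\sin2\theta\sin\alpha(\e^{\frac{4}{g}\xi}-m)$ has sign determined by whether $\xi$ lies above or below $S_0$. I will group it together with the $\lambda$ term: their sum equals
\[
\sin2\theta\,\Big(\sin\alpha\,(\e^{\frac4g\xi}-m)+\sqrt{\tfrac2g}\,\lambda\,\e^{\frac2g\xi}\Big),
\]
which is at least $\sin 2\theta$ times the quantity defining $S_\lambda$, evaluated with $\sin\alpha\le1$. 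The expected obstacle is that $\xi$ decreases below $S_\lambda$, since $\xi'<0$. There the first bracket becomes negative, of size $O(\xi-S_\lambda)$ multiplied by $\sin2\theta$, while the middle term is of size $|\cos\alpha|\,|H(\theta)|$.

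\emph{Step 3: controlling the main obstacle.} To handle this I would work near $\theta=\frac{\pi}{2}$, using the graph equation \eqref{eq:10-25-1}. Near $\frac{\pi}{2}$, $\tan\theta$ blows up, so $|H(\theta)|\gtrsim 1/(\frac{\pi}{2}-\theta)$, whereas $\sin2\theta\sim 2(\frac{\pi}{2}-\theta)$. On the portion after $\tau$, where $\alpha$ starts to move away from $\frac{\pi}{2}$, the ratio of the negative term to the positive one is therefore
\[
\lesssim\frac{(\tfrac{\pi}{2}-\theta)^2\,|\xi-S_\lambda|}{|\cos\alpha|}.
\]
Here $|\xi-S_\lambda|$ is small, since $\xi$ can only fall by at most the order of the $\theta$-range, by the mean value argument of Lemma~\ref{lem:10-27-1}. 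This bound makes the positive term dominate, except possibly where $|\cos\alpha|$ is tiny. But while $\alpha$ is near $\frac{\pi}{2}$, the equation is close to the linearization about the constant solution. In that regime I would show directly that $\alpha$ is strictly increasing once it exceeds $\frac{\pi}{2}$, comparing with the linearized equation and using Lemma~\ref{lem:10-25-6}(i) near $\alpha=\frac{\pi}{2}$. This is the delicate step, and I would try first to treat it by a continuity argument in $\varepsilon$ combined with the monotonicity of $\alpha$.
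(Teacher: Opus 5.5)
\paragraph{Step 1 is fine; the uniqueness step has a gap.} Your Step 1 is correct and is the paper's argument. Continuous dependence on $[0,\tau]$ forces $T_1>\tau$ for every admissible $T_1$, hence $\theta(T_1)>\frac{\pi}{2}-2\delta$, and so $\theta(T_1)\to\frac{\pi}{2}$.

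The gap is in the uniqueness part. You try to prove $\alpha'>0$ on all of $(0,T_2)$, which amounts to concavity of the whole upper graph. You correctly spot that this fails to follow from the equation in the regime where $\xi<S_\lambda$ and $|\cos\alpha|$ is small: there the positive middle term of $\alpha'$ need not dominate. Step 3 then only says what you ``would try'' in that regime, so uniqueness of $T_1$ is not proved. It is also unclear whether $\alpha$ is monotone on all of $(0,T_2)$ in general, and you do not need it. Separately, your ratio estimate is off by a factor: in \eqref{eq:10-25-0} the middle term equals $2\sin2\theta\cos\alpha\,H(\theta)$. It is therefore of size about $2m_2|\cos\alpha|$, not $|\cos\alpha|\,|H(\theta)|$.

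\paragraph{What you are missing.} You only need the sign of $\alpha'$ on the level set $\alpha=\frac{3\pi}{4}$.
\begin{itemize}
\item Every $t\in(0,T_2)$ with $\alpha(t)=\frac{3\pi}{4}$ is itself a candidate $T_1$, because on $(0,T_2)$ one has $\alpha\in(\frac{\pi}{2},\pi)$ and $\xi'/\theta'=\cot\alpha$. By your own Step 1, every such $t$ satisfies $\theta(t)>\frac{\pi}{2}-2\delta$.
\item At such a time $|\cos\alpha|=\frac{1}{\sqrt{2}}$ is bounded away from zero. The middle term of $\alpha'$ is $\sqrt{2}\,(m_2\sin^2\theta-m_1\cos^2\theta)$, which is close to $\sqrt{2}\,m_2$.
\item The first term is at least $-m\sin2\theta\ge-4m\delta$, since $\e^{\frac{4}{g}\xi}>0$. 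No control of $\xi-S_\lambda$ is needed.
\item The $\lambda$-term is nonnegative because $\lambda\ge0$.
\end{itemize}
Hence $\alpha'(t)>0$ at every crossing of the level $\frac{3\pi}{4}$ in $(0,T_2)$, once $\delta$ is small.

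Two such crossings are then impossible. After the first crossing, $\alpha>\frac{3\pi}{4}$ on a right neighbourhood. The first subsequent return to $\frac{3\pi}{4}$ inside $(0,T_2)$ would be a crossing with $\alpha'\le0$, a contradiction. Combined with the fact that $T_2$ is the first zero of $\theta'$, this gives exactly one pair $(T_1,T_2)$. This is the paper's argument, and it never enters the delicate region where $\cos\alpha\approx0$.
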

\begin{proof}
	The time $T_2$ is obviously unique.
	
	On the other hand the initial condition $\varepsilon=0$ has as solution the line $(\xi,\theta,\alpha)=(S_\lambda,\arctan(\e^{2t}\tan\theta^*),\frac{\pi}{2})$.
	So one finds that as $\varepsilon\to 0$ the solution and its derivatives converge uniformly on compacta to the above curve,
	in particular for on any finite interval $[0,T]$, one can make $\xi'(t)/\theta'(t)$ arbitrarily small for all $t\in[0,T]$ by taking $\varepsilon$ small,
	while $\theta(T)$ is arbitrarily close to $\frac{\pi}{2}$ by taking \(T\) large and then $\varepsilon$ small.
	This means that as $\varepsilon \to 0$ one must have $\frac{\pi}{2}-\theta(T_1(\varepsilon))\to 0$, where $T_1$ is any of the times satisfying lemma \ref{lem:10-27-4}.
	
	Looking, however, at \eqref{eq:10-25-0} it is clear that $\alpha'(T_1)>0$ if $\theta(T_1)$ is close to $\frac{\pi}{2}$  while $\xi(T_1)$ is not too large.
	Hence, for $\varepsilon$ small enough one gets that for any pair $(T_1,T_2)$ satisfying \ref{lem:10-27-4} there is no pair $(T'_1,T_2)$ satisfying lemma \ref{lem:10-27-4} with $T'_1>T_1$.
    This completes the proof.
\end{proof}
Now we can introduce the following notations for brevity:
\[
\theta_1:=\theta(T_1),\ \ \ \theta_2:=\theta(T_2), \ \ \ \xi_2:=\xi(T_2).
\]
If \(\xi_0+\varepsilon\) is type 1, then there is a time $T>0$ for which $\theta(T)=\theta^*$ and $\alpha(t)\in(\frac{\pi}{2},\frac{3\pi}{2})$, $\theta(t)>\theta^*$ for all $t\in(0,T)$.
By lemma \ref{lem:10-27-4} one finds that \( T_2<T\) and the trajectory \(\{(\theta,\xi)(t)\,|\,t\in[0,T_2]\}\) is a graph of \(\xi\) over \(\theta\) in \([\theta^*,\theta_2]\), this graph satisfies \eqref{eq:10-25-1} and will be called the upper graph.
Further if in addition \(\lambda\ge0\) then the trajectory \(\{(\theta,\xi)(t)\,|\,t\in[T_2,T]\}\) is also a graph of \(\xi\) over \(\theta\) in \([\theta^*,\theta_2]\), this graph satisfies \eqref{eq:10-25-2} and will be called the lower graph.
In fact \(\alpha'(T_2)>0\) provided \(\lambda\ge0\) then one gets from lemma \ref{lem:10-25-6}(iv) that \(\alpha(t)\in(\pi,\frac{3\pi}{2})\) for \(t\in (T_2,T)\).
\begin{lemma}\label{lem:10-27-6}
	Let $\lambda\ge 0$ and $\xi_0=S_\lambda+\varepsilon$ be of type 1, then for $\varepsilon$ small enough one has that $\xi(T_1)\le S_\lambda$.
\end{lemma}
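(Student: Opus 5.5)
The plan is to argue by contradiction. Suppose there is a sequence $\varepsilon_k\to0$ such that each $\xi_0=S_\lambda+\varepsilon_k$ is of type 1 but $\xi(T_1)>S_\lambda$. By Lemma \ref{lem:10-27-4}, $\xi$ is decreasing on $[0,T_1]$. Hence throughout $[0,T_1]$ we have $\xi\in(S_\lambda,S_\lambda+\varepsilon_k]$, $\alpha\in(\frac{\pi}{2},\frac{3\pi}{4}]$ and $\theta\in[\theta^*,\theta_1]$. By Lemma \ref{lem:10-27-5}, $T_1$ is uniquely defined and $\theta_1\to\frac{\pi}{2}$.

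I would work with the upper graph $\xi(\theta)$ on $[\theta^*,\theta_1]$, which satisfies \eqref{eq:10-25-1}. Put $p=\frac{\mathrm d\xi}{\mathrm d\theta}$; then $p(\theta^*)=0$, $p\in[-1,0]$ and $p(\theta_1)=-1$. Write
\[
Q=\e^{\frac4g\xi}-m+\lambda\sqrt{1+p^2}\sqrt{\tfrac2g}\,\e^{\frac2g\xi}.
\]
Since $\e^{\frac4g S_\lambda}-m+\lambda\sqrt{\frac2g}\e^{\frac2g S_\lambda}=0$, $\xi>S_\lambda$ and $\lambda\ge0$, we get $Q>0$. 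Moreover $Q=O(\varepsilon_k)+O(\lambda p^2)$ in the region where $p$ is small. The equation reads
\[
\frac{\mathrm d}{\mathrm d\theta}\arctan p=-\bigl(Q+2H(\theta)p\bigr),
\]
where $H(\theta)<0$ for $\theta>\theta^*$ and $H\sim-\frac{m_2}{2(\frac{\pi}{2}-\theta)}$ near $\frac{\pi}{2}$. Two facts constrain the graph. First, $p$ must reach $-1$ only in a shrinking neighbourhood of $\frac{\pi}{2}$, because $\int_{\theta^*}^{\theta_1}|p|\,\mathrm d\theta=\xi_0-\xi(T_1)<\varepsilon_k$. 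Second, near $\frac{\pi}{2}$ the singular drift $-2Hp$ is negative and of size $\frac{m_2|p|}{\frac{\pi}{2}-\theta}$.

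Next I would rescale near $\frac{\pi}{2}$. Set $s=\frac{\frac{\pi}{2}-\theta}{\frac{\pi}{2}-\theta_1}$ and $\xi=S_\lambda+(\frac{\pi}{2}-\theta_1)\eta(s)$. Then $\eta'(1)=1$, with the sign flipped by the orientation, and the rescaled equation has a singular coefficient $\frac{m_2}{s}$. The $Q$-term becomes $(\frac{\pi}{2}-\theta_1)Q$, which tends to $0$ uniformly on compact $s$-sets. Passing to the limit as in Lemma \ref{lem:10-27-5}, via Proposition \ref{prop:10-25-3}(ii) applied to the rescaled flow, should give a limiting profile $\eta_\infty$ solving the explicit equation
\[
(\arctan q)'=\pm\frac{m_2\,q}{s},\qquad q=\eta_\infty'.
\]
This equation can be integrated in closed form. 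Here $\pm$ stands for the sign produced by the flip $\theta\mapsto s$, which I would fix in the actual computation. Matching with the outer region, where $\xi\to S_\lambda$ and $p\to0$ as $s\to\infty$, I would then show the following: a profile with $q(1)=1$ and $q\to0$ at $s=\infty$ forces $\eta_\infty(1)<0$, i.e. $\xi(T_1)<S_\lambda$ for large $k$. This is the desired contradiction.

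The main obstacle is the limiting and matching step. The limit equation must be derived rigorously, with uniform control of the rescaled solutions on $[1,\infty)$ so that no mass escapes to $s=\infty$. One must also check that the sign of $\eta_\infty(1)$ really is forced negative rather than merely nonpositive. If the explicit limit gives only $\eta_\infty(1)\le0$, I would refine by retaining the first-order $Q$-term, which has a definite sign since $Q>0$ and $\lambda\ge0$, as a perturbation. I would also compare with the constant solution $\xi\equiv S_\lambda$, using the uniqueness theorem of \cite{Liang} at $\theta=\frac{\pi}{2}$ as in the proof of Lemma \ref{lem:10-26-1}, to exclude the borderline case.
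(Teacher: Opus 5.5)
Your proposal has a genuine gap in the matching step, and that step carries the whole argument. Your inner limit equation works out, with the correct sign, to $(\arctan q)'=-m_2q/s$, so $q(s)=(2s^{2m_2}-1)^{-1/2}$. It determines only $\eta_\infty'$: every solution with $q(1)=1$ has $q\to0$ automatically, and $\eta_\infty(1)$ is a free additive constant. So the claim that such a profile ``forces $\eta_\infty(1)<0$'' has no content by itself. The constant must come from the outer region, where the contradiction hypothesis only gives $0<\eta\le\varepsilon/(\frac{\pi}{2}-\theta_1)$ on $[1,s^*]$. Convergence on compacta yields $\eta(S)-\eta(1)\to\int_1^S q\,\mathrm ds$. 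Hence you reach a contradiction exactly when one of the following holds:
\begin{enumerate}[(a)]
\item for $m_2\ge2$, $\varepsilon/(\frac{\pi}{2}-\theta_1)$ is eventually smaller than $\int_1^\infty q\,\mathrm ds$;
\item for $m_2=1$, $\varepsilon/(\frac{\pi}{2}-\theta_1)$ is merely bounded. Here $\int_1^S q\,\mathrm ds\sim\frac{1}{\sqrt2}\log S$, so $\eta_\infty$ is unbounded and matching ``at $s=\infty$'' cannot be taken literally.
\end{enumerate}
Nothing in your proposal bounds $\varepsilon$ in terms of $\frac{\pi}{2}-\theta_1$. The fact $\int|p|\,\mathrm d\theta<\varepsilon$ only yields $\theta_1\to\frac{\pi}{2}$; combined with the blow-up it gives $\frac{\pi}{2}-\theta_1\lesssim\varepsilon$, which is the wrong direction. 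The statement that $\xi\to S_\lambda$ in the outer region holds at scale $\varepsilon$, not at the scale $\frac{\pi}{2}-\theta_1$ that $\eta$ measures. Your fallbacks (keeping the $Q$-term, Liang's uniqueness) do not address this.

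The missing ingredient is the quantitative link $\varepsilon\lesssim(\frac{\pi}{2}-\theta_1)^{m_2}$, which is the first step of the paper's proof. Concavity of the upper graph together with $\xi(\theta_1)>S_\lambda$ gives $\xi>S_\lambda+\frac{\varepsilon}{2}$ on $[\theta^*,\frac{\theta^*+\theta_1}{2}]$, hence $Q\gtrsim\varepsilon$ there. Since $2Hp\ge0$, this yields $|p|\gtrsim\varepsilon$ at the midpoint. Now integrate $\frac{p'}{p(1+p^2)}=-\frac{Q}{p}-2H$ from the midpoint to $\theta_1$, using $-Q/p\ge0$ and $-2H=\frac{m_2}{\frac{\pi}{2}-\theta}+O(1)$. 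This bounds $|p|$ at the midpoint above by $C(\frac{\pi}{2}-\theta_1)^{m_2}$. With this estimate your blow-up argument does close in both cases. The paper, however, dispenses with the limiting profile. It chooses $\theta_0$ with $p(\theta_0)=1/H(\theta_0)$ and derives $|p(\theta)|\asymp(\frac{\pi}{2}-\theta_0)^{m_2+1}(\frac{\pi}{2}-\theta)^{-m_2}$ on $[\theta_0,\theta_1]$ by the same logarithmic integration. It then integrates this to show directly that $\xi(\theta_0)-\xi(\theta_1)$ exceeds $\varepsilon$.
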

\begin{proof}
	We assume that $\xi(T_1)>S_\lambda$ and get a contradiction.
	The first step is to show this assumptions leads to constants $c_1,\,c_2>0$ independent of $\varepsilon$ such that for $\varepsilon$ small enough one has
	\begin{equation}\label{eq:1-12-3}
		c_1\varepsilon<\left( \frac{\pi}{2}-\theta_1\right) ^{m_2}<c_2\,\varepsilon. 
	\end{equation}
	By lemma \ref{lem:10-27-4} one concludes from  $\xi'(t)<0$ for all $t\in(0,T_1]$  that $\xi(t)>S_\lambda$ for such $t$.
	Since the upper graph satisfies the graph ODE \eqref{eq:10-25-1} and 
	$\lambda\ge0$ one then gets that $\frac{\mathrm{d}^2\xi}{\mathrm{d}\theta^2}<0$ for all $\theta\in(\theta^*,\theta_1)$ and that $\frac{\mathrm{d}\xi}{\mathrm{d}\theta}$ is strictly decreasing in this interval.
	The mean value theorem and the concavity of $\xi$ then implies 
	\[
	 \frac{\mathrm{d}\xi}{\mathrm{d}\theta}\left( \frac{\theta_1+\theta^*}{2}\right)  >-\varepsilon\,\frac{2}{\theta_1-\theta^*},\ \ \  \xi\left( \frac{\theta_1+\theta^*}{2}\right) >S_\lambda+\frac{\varepsilon}{2}. 
	\]
	Combining the second inequality with the graph ODE \eqref{eq:10-25-1} yields that \[\frac{\mathrm{d}^2\xi}{\mathrm{d}\theta^2}<-\hat c_1\varepsilon\] on $[\theta^*,\frac{\theta^*+\theta_1}{2}]$ for some $\hat c_1>0$.
	Using that $\theta_1\to\frac{\pi}{2}$ as $\varepsilon\to 0$ then gives constant $\tilde{c}_1,\ \tilde{c}_2$ such that 
	\begin{equation}\label{eq:1-12-1}
		\tilde{c}_1\varepsilon<\left| \frac{\mathrm{d}\xi}{\mathrm{d}\theta}\left( \frac{\theta_1+\theta^*}{2}\right) \right| <\tilde{c}_2\varepsilon.
	\end{equation} 
	Rewriting the graph ODE \eqref{eq:10-25-1} as 
	\begin{equation}\label{eq:1-12-2}
	\begin{aligned}
		\frac{\frac{\mathrm{d}^2\xi}{\mathrm{d}\theta^2}}{\frac{\mathrm{d}\xi}{\mathrm{d}\theta}\left( 1+\left(\frac{\mathrm{d}\xi}{\mathrm{d}\theta} \right)^2 \right) } =& -\frac{\e^{\frac{4}{g}\xi}-m+\lambda\sqrt{\frac{2}{g}}\e^{\frac{2}{g}\xi}}{\frac{\mathrm{d}\xi}{\mathrm{d}\theta}}+ \lambda\sqrt{\frac{2}{g}}\frac{1-\sqrt{1+\left( \frac{\mathrm{d}\xi}{\mathrm{d}\theta}\right)^2 }}{\frac{\mathrm{d}\xi}{\mathrm{d}\theta}}\e^{\frac{2}{g}\xi}\\
		&-m_1\cot\theta+m_2\tan\theta.
	\end{aligned}
	\end{equation}
	One notes that the first terms on the right-hand side of the above equation is $O(1)$ in the interval $\left(\frac{\theta_1+\theta^*}{2},\theta_1 \right) $ by \eqref{eq:1-12-1} and the monotonicity of $\frac{\mathrm{d}\xi}{\mathrm{d}\theta}$.
	Also the second and third terms are clearly \(O(1)\) in the same interval. 
	Integrating \eqref{eq:1-12-2} over this interval the gives
	\[
	\log\left|\frac{\mathrm{d}\xi}{\mathrm{d}\theta}\left(\frac{\theta_1+\theta^*}{2} \right)  \right| +O(1)= m_2\log(\cos\theta_1)+O(1).
	\]
	The approximation $\cos\theta=\frac{\pi}{2}-\theta+O\left( \left( \frac{\pi}{2}-\theta\right)^3 \right)$ then gives the bounds \eqref{eq:1-12-3} from \eqref{eq:1-12-1}.
	
	For the next step note that $H(\theta)-\left(\frac{\mathrm{d}\xi}{\mathrm{d}\theta}(\theta) \right) ^{-1}$ becomes $+\infty$ as $\theta\to\theta^*$ and $H(\theta^*)+1$ as $\theta\to\theta_1$, which for $\varepsilon$ small enough will be negative.
	Hence for small enough $\varepsilon$ there is a $\theta_0\in (\theta^*,\theta_1)$ so that $H(\theta_0)= \left(\frac{\mathrm{d}\xi}{\mathrm{d}\theta}(\theta_0) \right) ^{-1}$.
	In the same way as lemma \ref{lem:10-27-5} one shows that $\theta_0\to \frac{\pi}{2}$ as $\varepsilon\to 0$.
	
	We let \[ q:= \frac{\frac{\pi}{2}-\theta_0}{\frac{\pi}{2}-\theta_1}\] and show next that there is a constant $c_3$ such that $q^{m_2}\ge \frac{c_3}{\frac{\pi}{2}-\theta_0}$,
	and hence $q$ grows unboundedly as $\varepsilon\to0$.
	
	To show this we again integrate \eqref{eq:1-12-2}, this time from $\theta_0$ to $\theta\le\theta_1$.
	The result is 
	\[
	 \log\left|\frac{\mathrm{d}\xi}{\mathrm{d}\theta}(\theta_0) \right| -  \log\left|\frac{\mathrm{d}\xi}{\mathrm{d}\theta}(\theta) \right|+O(1)=m_2\log\left(\frac{\cos\theta}{\cos\theta_0} \right) +O(1).
	\]            
	Noting that $\frac{\rm d \xi}{\rm d\theta}(\theta_0)=\frac{1}{H(\theta_0)}=-\frac{2}{m_2}(\frac{\pi}{2}-\theta_0)+O\left( \left(\frac{\pi}{2}-\theta_0 \right)^2 \right) $ and again using the expansion of cosine close to $\frac{\pi}{2}$ implies the existence of $c_3,\ c_4>0$ so that:
	\begin{equation}\label{eq:1-13-1}
		 c_4\left| \frac{\rm d\xi}{\rm d\theta}(\theta)\right| \ge \left(\frac{\frac{\pi}{2}-\theta_0}{\frac{\pi}{2}-\theta} \right)^{m_2}\left(\frac{\pi}{2}-\theta_0 \right)\ge  c_3\left| \frac{\rm d\xi}{\rm d\theta}(\theta)\right|.
	\end{equation}
	Taking $\theta=\theta_1$ shows that $q^{m_2}\ge \frac{c_3}{\frac{\pi}{2}-\theta_0}$, in particular $q\to+\infty$ as $\varepsilon\to0$. 
	One the other hand, if one integrate $\frac{\rm d\xi}{\rm d\theta}(\theta)$ over $(\theta_{ 0},\theta_1)$ one finds by \eqref{eq:1-13-1} that 
	\[
	\begin{aligned}
	c_4\left| \xi(\theta_1)-\xi(\theta_0)\right| \ge 
	\begin{cases}
	\left( \frac{\pi}{2}-\theta_0\right) ^2\log q &  m_2=1, \\
	\frac{1}{m_2-1}\left( \frac{\pi}{2}-\theta_{\scriptstyle 0}\right) ^2 \left( q^{m_2-1}-1 \right) &  m_2>1.
	\end{cases}
	\end{aligned}
	\]
	By \eqref{eq:1-12-3} and $q^{m_2}\ge \frac{c_3}{\frac{\pi}{2}-\theta_0}$ we have 
	\[
	\left(\frac{\pi}{2}-\theta_0 \right)^2=\frac{\left( \frac{\pi}{2}-\theta_1 \right) ^{m_2}}{\left( \frac{\pi}{2}-\theta_0 \right) ^{m_2}}\, q^{m_2}\left( \frac{\pi}{2}-\theta_0 \right) ^2\ge c_1c_3\,\varepsilon\left( \frac{\pi}{2}-\theta_0 \right) ^{1-m_2}.
	\]
	Using unboundedness of $q$ one sees that $\left| \xi(\theta_1)-\xi(\theta_0) \right| $ will be larger than $\varepsilon$, contradicting our assumption that $\xi(\theta_1)\ge S_\lambda$.
\end{proof}
\begin{lemma}\label{lem:10-27-7}
	Let $\lambda\ge 0$ and $\xi_0=S_\lambda+\varepsilon$ be of type 1, then  
	\[
	\xi(T_2)\le S_\lambda-\left( 2^{\frac{1}{2m_2}}-1\right) \left( \frac{\pi}{2}-\theta_2\right) + O\left(\left(\frac{\pi}{2}-\theta_2 \right)^2  \right). 
	\]
	for \(\varepsilon\) small enough.
\end{lemma}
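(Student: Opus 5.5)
Following the analogous step in Riedler's argument, I will work on the upper graph $\xi(\theta)$ over $[\theta^*,\theta_2]$ near its endpoint $\theta_2$, where the tangent is vertical: $\frac{\mathrm d\xi}{\mathrm d\theta}\to-\infty$ as $\theta\to\theta_2^-$. By Lemma \ref{lem:10-27-5}, $\theta_1\to\frac{\pi}{2}$, hence $\theta_2\to\frac{\pi}{2}$ as $\varepsilon\to0$, and by Lemma \ref{lem:10-27-6}, $\xi(T_1)\le S_\lambda$. The idea is to show that on $[\theta_1,\theta_2]$ the graph is steep enough that $\xi$ drops by at least a fixed multiple of $\frac{\pi}{2}-\theta_2$.

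\textbf{Step 1: a differential inequality for the slope.} Set $p=-\frac{\mathrm d\xi}{\mathrm d\theta}$, so $p\ge1$ on $[\theta_1,\theta_2)$ and $p=1$ at $\theta_1$. By Lemma \ref{lem:10-27-4}, $\xi$ is decreasing, so $\xi\le\xi(T_1)\le S_\lambda$ on $[\theta_1,\theta_2]$. Hence
\[
\e^{\frac{4}{g}\xi}-m+\lambda\sqrt{\tfrac{2}{g}}\sqrt{1+p^2}\,\e^{\frac{2}{g}\xi}\le \lambda\sqrt{\tfrac{2}{g}}\,\e^{\frac{2}{g}\xi}\bigl(\sqrt{1+p^2}-1\bigr)\le Cp .
\]
Since $\lambda\ge0$, this term has a favourable sign or is $O(p)$. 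Then \eqref{eq:10-25-1} yields
\[
\frac{p'}{p(1+p^2)}\ge (m_1\cot\theta-m_2\tan\theta)-C'\ge -\frac{m_2}{\frac{\pi}{2}-\theta}-C''
\]
for $\theta$ near $\frac{\pi}{2}$, using $\tan\theta=\frac{1}{\frac{\pi}{2}-\theta}+O(\frac{\pi}{2}-\theta)$. I will check carefully that the $\lambda$-term indeed only contributes $O(1)$ after dividing by $p(1+p^2)$; it is in fact $O(1/p)$.

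\textbf{Step 2: integrate backward from $\theta_2$.} Let $u=\frac{1}{p^2}$, so $u(\theta_2)=0$. The inequality from Step 1 becomes a linear inequality for $u$ of the form
\[
u'\le \frac{2m_2}{\frac{\pi}{2}-\theta}\,(1+u)+C(1+u).
\]
Integrating from $\theta$ to $\theta_2$ with the integrating factor $(\frac{\pi}{2}-\theta)^{2m_2}$ gives
\[
1+u(\theta)\ge\left(\frac{\frac{\pi}{2}-\theta}{\frac{\pi}{2}-\theta_2}\right)^{2m_2}(1+O(\tfrac{\pi}{2}-\theta)).
\]
I need to fix the inequality direction so that the conclusion is a lower bound on $p$, i.e.\ a bound showing the slope stays steep. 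Equivalently, the bound says $p\ge1$ persists on the interval where $\frac{\pi}{2}-\theta\le 2^{\frac{1}{2m_2}}(\frac{\pi}{2}-\theta_2)$. This is exactly where the constant $2^{\frac{1}{2m_2}}$ comes from: $u$ reaches $1$ (that is, $p=1$) only once the ratio $\bigl(\frac{\frac{\pi}{2}-\theta}{\frac{\pi}{2}-\theta_2}\bigr)^{2m_2}$ reaches $2$. Consequently $\frac{\pi}{2}-\theta_1\ge 2^{\frac{1}{2m_2}}(\frac{\pi}{2}-\theta_2)(1+O(\frac{\pi}{2}-\theta_2))$, so $\theta_2-\theta_1\ge(2^{\frac{1}{2m_2}}-1)(\frac{\pi}{2}-\theta_2)+O((\frac{\pi}{2}-\theta_2)^2)$.

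\textbf{Step 3: conclude.} Since $p\ge1$ on $[\theta_1,\theta_2]$,
\[
\xi(T_2)\le\xi(T_1)-(\theta_2-\theta_1)\le S_\lambda-\bigl(2^{\frac{1}{2m_2}}-1\bigr)\left(\tfrac{\pi}{2}-\theta_2\right)+O\bigl((\tfrac{\pi}{2}-\theta_2)^2\bigr),
\]
which is the claim.

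\textbf{Main obstacle.} The hard part is Step 2. The $O(1)$ error terms must be tracked so that they produce only a multiplicative factor $1+O(\frac{\pi}{2}-\theta_2)$, and the singular endpoint $u(\theta_2)=0$ with $p=\infty$ must be handled. I would first try the explicit comparison with the model equation $u'=\frac{2m_2(1+u)}{\frac{\pi}{2}-\theta}$, whose solution is $1+u=\bigl(\frac{\frac{\pi}{2}-\theta}{\frac{\pi}{2}-\theta_2}\bigr)^{2m_2}$, and absorb the perturbation with Grönwall's inequality on an interval of length $O(\frac{\pi}{2}-\theta_2)$.
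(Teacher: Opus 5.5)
The overall plan is the paper's, but the key step fails as written. The paper also reduces to $\xi(T_2)\le\xi(T_1)-(\theta_2-\theta_1)$ and then integrates \eqref{eq:1-12-2} from $\theta_1$ to $\theta_2$. There the left side integrates to exactly $\frac12\log 2$; in your notation, $\log(1+u)$ runs from $\log 2$ down to $0$. A one-sided version of that comparison would suffice.

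\textbf{The sign error.} The decisive inequality in your Step 1 has the wrong sign, and this propagates. With $p=-\frac{\mathrm d\xi}{\mathrm d\theta}$, equation \eqref{eq:10-25-1} gives exactly
\[
\frac{p'}{p(1+p^2)}=m_2\tan\theta-m_1\cot\theta+\frac{\e^{\frac4g\xi}-m+\lambda\sqrt{\tfrac2g}\sqrt{1+p^2}\,\e^{\frac2g\xi}}{p},
\]
so the singular term is $+m_2\tan\theta$, not $-m_2\tan\theta$.
\begin{itemize}
\item Your lower bound by $-\frac{m_2}{\pi/2-\theta}-C''$ is true but vacuous, since the left side is actually positive and of order $\frac{m_2}{\pi/2-\theta}$.
\item Your inequality $u'\le(\frac{2m_2}{\pi/2-\theta}+C)(1+u)$ is equally vacuous, since in fact $u'<0$. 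Integrated from $\theta$ to $\theta_2$ it gives only $1+u(\theta)\ge\bigl(\frac{\pi/2-\theta_2}{\pi/2-\theta}\bigr)^{2m_2}\e^{-C(\theta_2-\theta)}$, which is trivial.
\item The bound you actually wrote, with the ratio inverted, does not follow from your inequality. Even if granted, a lower bound on $1+u$ is an upper bound on $p$. At $\theta_1$ it yields $\frac{\pi/2-\theta_1}{\pi/2-\theta_2}\le 2^{\frac{1}{2m_2}}(1+O(\cdot))$, the opposite of what Step 3 needs. The sentence beginning ``Equivalently'' silently reverses the direction.
\end{itemize}

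\textbf{The repair.} It uses exactly the estimate you already derived. Since $\xi\le S_\lambda$ on $[\theta_1,\theta_2]$ and $\lambda\ge0$, the last fraction is at most $C:=\lambda\sqrt{2/g}\,\e^{\frac2g S_\lambda}$ for every $p>0$. (That fraction is $O(1)$, not $O(1/p)$, but $O(1)$ is enough.) Also $-m_1\cot\theta\le0$ and $\tan\theta\le\frac{1}{\pi/2-\theta}$. Hence
\[
\frac{p'}{p(1+p^2)}\le\frac{m_2}{\pi/2-\theta}+C,\qquad\text{i.e.}\qquad (\log(1+u))'\ge-\frac{2m_2}{\pi/2-\theta}-2C.
\]
Integrating from $\theta$ to $\theta_2$, where $u=0$, gives the upper bound
\[
1+u(\theta)\le\Bigl(\frac{\frac\pi2-\theta}{\frac\pi2-\theta_2}\Bigr)^{2m_2}\e^{2C(\theta_2-\theta)}.
\]
Evaluating at $\theta_1$, where $u=1$, gives $\frac{\pi/2-\theta_1}{\pi/2-\theta_2}\ge2^{\frac1{2m_2}}\e^{-C(\theta_2-\theta_1)/m_2}$.

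\textbf{Two smaller points.}
\begin{itemize}
\item Your Gr\"onwall remark presupposes $\theta_2-\theta_1=O(\frac\pi2-\theta_2)$. A one-sided estimate does not supply this; the paper gets it from its two-sided $O(1)$ bound. It is not needed, though. Either the ratio already exceeds $2^{\frac1{2m_2}}$, or $\theta_2-\theta_1<(2^{\frac1{2m_2}}-1)(\frac\pi2-\theta_2)$, in which case the exponential is $1+O(\frac\pi2-\theta_2)$.
\item Step 3 needs $p\ge1$ on $[\theta_1,\theta_2]$, which you assert without proof. It follows from the uniqueness of $T_1$ in Lemma \ref{lem:10-27-5}, which gives $\alpha\in(\frac{3\pi}4,\pi)$ on $(T_1,T_2)$. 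Alternatively, use the concavity of the upper graph, as the paper does.
\end{itemize}
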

\begin{proof}
	By lemma \ref{lem:10-27-6} one has that $\xi(T_1)\le S_\lambda$.
	Since the upper graph satisfies \eqref{eq:10-25-1} and \(\lambda\ge0\) one gets that for $\varepsilon$ small enough $\frac{\rm d^2\xi}{\rm d\theta^2}(\theta)<0$ for all $\theta\in(\theta_1,\theta_2)$, 
	whence $\xi(T_2)\le\xi(T_1)-(\theta_2-\theta_1)$ and the statement reduces to checking $\theta_2-\theta_1=\left(2^{\frac{1}{2m_2}}-1 \right) \left(\frac{\pi}{2}-\theta_2 \right)+O\left(\left(\frac{\pi}{2}-\theta_2 \right)^2 \right)$.
	
	Again, as in the proof of lemma \ref{lem:10-27-6}, one gets by integrating the equation \eqref{eq:1-12-2} from $\theta_1$ to $\theta_2$ that
	\[
	\frac{1}{2}\log2=O\left(\frac{\pi}{2}-\theta_1 \right)-m_2\log\left(\frac{\cos\theta_2}{\cos\theta_1} \right).
	\]
	Note that $\cos\theta=\frac{\pi}{2}-\theta+O\left(\left(\frac{\pi}{2}-\theta \right)^3 \right)$ as $\theta$ becomes arbitrarily close to $\frac{\pi}{2}$.
	This together with the above equality give the existence of $c>0$ (independent of $\varepsilon$) so that $\frac{1}{c}<\frac{\frac{\pi}{2}-\theta_2}{\frac{\pi}{2}-\theta_1}<c$, 
	and give the following equality
	\[
	\frac{1}{2}\log2=O\left(\frac{\pi}{2}-\theta_1 \right)-m_2\log\left(\frac{\frac{\pi}{2}-\theta_2}{\frac{\pi}{2}-\theta_1}\right).
	\]
	Then the lemma follows from some basic arithmetic.
\end{proof}
\begin{lemma}\label{lem:10-27-8}
	If $\lambda\ge 0$ and $\varepsilon$ is small enough then $\xi_0= S_\lambda+\varepsilon$ is not type 1.
\end{lemma}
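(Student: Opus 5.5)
Suppose, for contradiction, that $\xi_0=S_\lambda+\varepsilon$ is of type 1 for a sequence of $\varepsilon\to0$. By Lemma \ref{lem:10-27-4} and the discussion after Lemma \ref{lem:10-27-5}, the trajectory then splits into the upper graph over $[\theta^*,\theta_2]$ (solving \eqref{eq:10-25-1}) and the lower graph over $[\theta^*,\theta_2]$ (solving \eqref{eq:10-25-2}). The lower graph joins $(\theta_2,\xi_2)$ to $(\theta^*,\xi(T))$, and $\alpha\in(\pi,\frac{3\pi}{2})$ on $(T_2,T)$. Lemma \ref{lem:10-27-7} puts the turning point strictly below $S_\lambda$ by an amount of order $\frac{\pi}{2}-\theta_2$. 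The plan is to show that the lower graph, started at this low point with vertical tangent, is pushed further down and never gets back to $\theta=\theta^*$ inside $D$, or at least cannot close up.

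\textbf{Step 1: compare the lower graph with a reference solution.} Near $\theta_2\approx\frac{\pi}{2}$, write \eqref{eq:10-25-2} in the form \eqref{eq:1-12-2}, now with the $\lambda$-terms carrying the opposite sign. Since $\xi_2<S_\lambda\le S_{-\lambda}$, the forcing term $\e^{\frac{4}{g}\xi}-m-\lambda\sqrt{\frac{2}{g}}\e^{\frac{2}{g}\xi}$ is negative, because $\xi<S_{-\lambda}$. This gives $\frac{{\rm d}^2\xi}{{\rm d}\theta^2}>0$ wherever $\frac{{\rm d}\xi}{{\rm d}\theta}$ is small.

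Going backwards in $\theta$ from $\theta_2$, the lower graph has $\frac{{\rm d}\xi}{{\rm d}\theta}\to+\infty$ at $\theta_2$. Its slope then decays like the Angenent/Riedler model: integrating \eqref{eq:1-12-2} gives $|\frac{{\rm d}\xi}{{\rm d}\theta}|\sim\big(\frac{\frac{\pi}{2}-\theta}{\frac{\pi}{2}-\theta_2}\big)^{m_2}$-type behaviour, mirroring the computation in Lemma \ref{lem:10-27-6}. The aim is to show that $\xi$ decreases as $\theta$ decreases from $\theta_2$, so that $\xi(T)<\xi_2<S_\lambda$ at $\theta=\theta^*$. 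In particular this gives $\xi(T)<S_\lambda<\xi_0$.

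\textbf{Step 2: rescaled limit.} To make Step 1 rigorous, I would blow up around $\theta=\frac{\pi}{2}$ with scale $\delta=\frac{\pi}{2}-\theta_2$. Set $\theta=\frac{\pi}{2}-\delta u$ and $\xi=S_\lambda+\delta v$. As $\varepsilon\to0$, both graphs converge, via Lemma \ref{lem:10-27-5} and the bounds \eqref{eq:1-12-3}, \eqref{eq:1-13-1}, to the limiting rotationally symmetric problem $v''=-(1+v'^2)\frac{m_2}{u}v'$-type linearized equations, in which the $\lambda$-terms drop out at leading order.

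In that limit, the upper piece is a catenoid-like curve. The bound from Lemma \ref{lem:10-27-7}, $v(1)\le-(2^{\frac{1}{2m_2}}-1)$, says the lower piece starts below the level needed. The limiting lower curve is the reflection of the upper one across its vertical tangent line, and it then goes to $v\to-\infty$ as $u$ grows, instead of returning to the level $v\approx0$. Transporting this back for small $\varepsilon$ shows that on the lower graph $\xi$ stays below $S_\lambda-c\delta$ on a $\theta$-interval of fixed rescaled length. Once it is below $S_\lambda$ with $\frac{{\rm d}\xi}{{\rm d}\theta}>0$, Lemma \ref{lem:10-25-6}(i,ii) and the sign of the forcing keep it monotone down to $\theta^*$.

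\textbf{Step 3: contradiction with the definition of type 1.} The contradiction itself would come from monotonicity and sign considerations. At time $T$ the curve hits $\theta^*$ with $\alpha(T)\in(\pi,\frac{3\pi}{2})$ and $\xi(T)<S_\lambda$. Continuing past $T$, Lemma \ref{lem:10-25-6}(iii) and the symmetry of Proposition \ref{prop:2-27-1} (with $m_1=m_2$) would force the curve to keep going down, which is inconsistent with the upper-graph concavity bound from Lemma \ref{lem:10-27-7}. The cleanest route is probably to show that the lower graph has $\frac{{\rm d}\xi}{{\rm d}\theta}>0$ on all of $(\theta^*,\theta_2)$, with slope at $\theta^*$ bounded below. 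That contradicts the behaviour forced by the equation near $\theta^*$ for $\alpha$ close to $\pi$.

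I expect Step 2, controlling the $\lambda$-dependent error terms uniformly in the rescaling, to be the main obstacle. If the blow-up proves too delicate, the fallback is a direct barrier comparison: compare the lower graph with the reflected upper graph using \eqref{eq:10-25-1} versus \eqref{eq:10-25-2}. The difference between the two equations is the term $2\lambda\sqrt{1+(\frac{{\rm d}\xi}{{\rm d}\theta})^2}\sqrt{\frac{2}{g}}\e^{\frac{2}{g}\xi}\ge0$, which has a favourable sign when $\lambda\ge0$.
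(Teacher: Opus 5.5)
There is a genuine gap: the proposal never aims at anything that contradicts type 1. If $S_\lambda+\varepsilon$ is type 1, then $\xi'<0$ on $(0,T)$, and on the lower graph $\frac{\mathrm{d}\xi}{\mathrm{d}\theta}=\cot\alpha>0$ all the way down to $\theta^*$.

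So several of your intermediate targets are automatic and carry no information:
\begin{itemize}
\item $\xi(T)<\xi_2<S_\lambda<\xi_0$ holds for free, and so does the monotone descent of the lower graph.
\item Definition \ref{def:10-25-4} does not require the curve to return to a level near $S_\lambda$ or $\xi_0$.
\item What happens after time $T$ is irrelevant.
\end{itemize}
A lower graph that stays monotone down to $\theta^*$ is exactly what happens for every large $\xi_0$, and those are type 1 by Lemma \ref{lem:10-27-3}. Nothing in \eqref{eq:10-25-0} near $\theta=\theta^*$ forbids arriving there with positive slope. So your ``cleanest route'' in Step 3 re-derives a consequence of the hypothesis instead of refuting it.

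The only contradiction available is a zero of $\frac{\mathrm{d}\xi}{\mathrm{d}\theta}$ on the lower graph at some $\theta\in(\theta^*,\theta_2)$, i.e.\ $\xi'(t)=0$ for some $t\in(T_2,T)$. You also read the signs backwards. Since $\xi\le\xi_2<S_\lambda\le S_{-\lambda}$ on the lower graph, \eqref{eq:10-25-2} gives $\frac{\mathrm{d}^2\xi}{\mathrm{d}\theta^2}>0$, so moving leftwards the slope \emph{decreases}; this convexity is what destroys monotonicity. Likewise, Lemma \ref{lem:10-25-6}(ii) does not keep $\alpha$ below $\frac{3\pi}{2}$: for $\xi<S_{-\lambda}$ it says $\alpha$ crosses $\frac{3\pi}{2}$ upwards, which is precisely the event you need.

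The missing idea is a competition between two scales.
\begin{itemize}
\item Integrating the logarithmic form \eqref{eq:1-13-2} from $\frac{\theta^*+\theta_2}{2}$ up to $\theta_2$, where the slope is $+\infty$, gives $\frac{\mathrm{d}\xi}{\mathrm{d}\theta}\big(\frac{\theta^*+\theta_2}{2}\big)\le C\big(\frac{\pi}{2}-\theta_2\big)^{m_2}$. Note the slope decays like $\big(\frac{\pi/2-\theta_2}{\pi/2-\theta}\big)^{m_2}$, not the inverse ratio you wrote.
\item Lemma \ref{lem:10-27-7} gives $\frac{\mathrm{d}^2\xi}{\mathrm{d}\theta^2}\ge c\big(\frac{\pi}{2}-\theta_2\big)$ on $\big(\theta^*,\frac{\theta^*+\theta_2}{2}\big)$, an interval of length bounded below.
\item For $m_2>1$ the convexity wins and the slope vanishes before $\theta^*$.
\item For $m_2=1$ both quantities are of order $\frac{\pi}{2}-\theta_2$. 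You then need the extra depth $c_4\big(\frac{\pi}{2}-\theta_2\big)\log q$ gained in the catenary region: take $\theta_3$ with $\frac{\mathrm{d}\xi}{\mathrm{d}\theta}(\theta_3)=-1/H(\theta_3)$, and $q=\frac{\pi/2-\theta_3}{\pi/2-\theta_2}\to\infty$.
\end{itemize}

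Your single-scale blow-up cannot detect this. On the limiting catenoid-type lower branch, $\frac{\mathrm{d}\xi}{\mathrm{d}\theta}$ never vanishes, and for $m_2\ge 2$ that branch stays bounded rather than tending to $-\infty$. The reason is that the forcing in \eqref{eq:10-25-2} which produces the zero is multiplied by $\delta$ after rescaling, and it acts over $u$-ranges that become unbounded as $\delta\to0$.

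The fallback comparison is not a substitute as stated:
\begin{itemize}
\item For $\lambda=0$, equations \eqref{eq:10-25-1} and \eqref{eq:10-25-2} coincide, so your favourable $2\lambda$-term vanishes.
\item After reflecting the upper graph through $\xi=\xi_2$, the $\lambda$-terms of the two equations have the same sign, while the $\e^{\frac{4}{g}\xi}-m$ terms change sign.
\item Such a comparison could only help if it showed the lower slope is dominated by the reflected slope, which vanishes at $\theta^*$ --- that is, again by producing a zero of $\frac{\mathrm{d}\xi}{\mathrm{d}\theta}$.
\end{itemize}
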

\begin{proof}
	We prove by contradiction argument.
	As note before the assumption $\xi_0=S_\lambda+\varepsilon$ is type 1 leads $(\xi,\theta)$ being the union of two graphs of $\xi$ over $\theta$.
	In the previous lemmas we investigated the upper graph which ends at the turning point $(\xi_2,\theta_2)$.
	
	For the lower graph, it is determined by the graph ODE \eqref{eq:10-25-2} and the initial conditions $\xi(\theta_2)=\xi_2,\ \lim_{\theta\to\theta_2^-}\frac{\rm d\xi}{\rm d\theta}(\theta)=+\infty$.
	The assumption that \(\lambda\ge0\) and $\xi_0$ is type 1 necessitates that for all $\theta\in (\theta^*,\theta_2)$ one has $\frac{\rm d\xi}{\rm d\theta}(\theta)>0$ for the lower graph.  
	Then one concludes from lemma \ref{lem:10-27-6} that 
	\begin{equation}\label{eq:1-13-2}
	\begin{aligned}
	\frac{\frac{\mathrm{d}^2\xi}{\mathrm{d}\theta^2}}{\frac{\mathrm{d}\xi}{\mathrm{d}\theta}\left( 1+\left(\frac{\mathrm{d}\xi}{\mathrm{d}\theta} \right)^2 \right) } =& -\frac{\e^{\frac{4}{g}\xi}-m}{\frac{\mathrm{d}\xi}{\mathrm{d}\theta}}+ \lambda\sqrt{\frac{2}{g}}\frac{\sqrt{1+\left( \frac{\mathrm{d}\xi}{\mathrm{d}\theta}\right)^2 }}{\frac{\mathrm{d}\xi}{\mathrm{d}\theta}}\e^{\frac{2}{g}\xi}
	-m_1\cot\theta+m_2\tan\theta\\
	>&-m_1\cot\theta+m_2\tan\theta.
	\end{aligned}
	\end{equation}
	Integrating form $\frac{\theta^*+\theta_2}{2}$ to $\theta_2$ one gets
	\begin{equation}\label{eq:1-14-2}
		\begin{aligned}
		-\log\frac{\frac{\rm d\xi}{\rm d\theta}\left( \frac{\theta^*+\theta_2}{2}\right)}{\sqrt{1+\frac{\rm d\xi}{\rm d\theta}\left( \frac{\theta^*+\theta_2}{2}\right)^2}}\ge&-m_2\log\cos\theta_2+O(1)\\
		\ge&-m_2\log\left(\frac{\pi}{2}-\theta_2 \right)+O(1).
		\end{aligned}
	\end{equation}
	This gives the existence of a $c>0$ so that 
	\begin{equation}\label{eq:1-14-1}
		\frac{\rm d\xi}{\rm d\theta}\left( \frac{\theta^*+\theta_2}{2}\right)\le \frac{c_1\left(\frac{\pi}{2}-\theta_2 \right)^{m_2}}{\sqrt{1-c_1^2\left(\frac{\pi}{2}-\theta_2 \right)^{2m_2}}}\le 2c_1\left(\frac{\pi}{2}-\theta_2 \right)^{m_2}.
	\end{equation}
	For $m_2>1$ this implies the lemma, since for all $\theta\in\left(\theta^*,\theta_2 \right)$ one has 
	\[
	\frac{\rm d^2\xi}{\rm d\theta^2}\left(\theta \right)>-\left.\left(\e^{\frac{4}{g}\xi}-m+\lambda\sqrt{\frac{2}{g}}\e^{\frac{2}{g}\xi} \right) \right|_{\xi=\xi(\theta_2)}\ge c_2\left(\frac{\pi}{2}-\theta_2 \right)
	\]
	 with some constant by lemma \ref{lem:10-27-7}, and then 
	 \[
	 \frac{\rm d\xi}{\rm d\theta}\left(\theta \right)\le2c_1\left(\frac{\pi}{2}-\theta_2 \right)^{m_2}-c_2\left(\frac{\pi}{2}-\theta_2 \right)\left( \frac{\theta^*+\theta_2}{2}-\theta \right)
	 \]
	 for $\theta\in\left(\theta^*, \frac{\theta^*+\theta_2}{2} \right)$ and one gets $\frac{\rm d\xi}{\rm d\theta}\left(\theta \right)=0$ for one such $\theta$.
	 
	 For $m_2=1$, one gets first from \eqref{eq:1-14-1} that $\frac{\rm d\xi}{\rm d\theta}$ takes all values in $\left(2c_1\left(\frac{\pi}{2}-\theta_2 \right),+\infty\right)$ as $\theta$ varies from $\frac{\theta^*+\theta_2}{2} $ to $\theta_2$.
	 In particular for $\theta_2$ close enough to $\frac{\pi}{2}$ there exists a $\theta_3\in\left( \frac{\theta^*+\theta_2}{2},\theta_2\right) $ so that $\frac{\rm d\xi}{\rm d\theta}\left(\theta_3 \right)=-\frac{1}{H(\theta_3)}$.
	 
	 Rewriting the graph ODE \eqref{eq:10-25-2} as 
	\begin{equation}\label{eq:1-28-1}
		\begin{aligned}
			\frac{\frac{\mathrm{d}^2\xi}{\mathrm{d}\theta^2}}{\frac{\mathrm{d}\xi}{\mathrm{d}\theta}\left( 1+\left(\frac{\mathrm{d}\xi}{\mathrm{d}\theta} \right)^2 \right) } =& -\frac{\e^{\frac{4}{g}\xi}-m-\lambda\sqrt{\frac{2}{g}}\e^{\frac{2}{g}\xi}}{\frac{\mathrm{d}\xi}{\mathrm{d}\theta}}+ \lambda\sqrt{\frac{2}{g}}\frac{\sqrt{1+\left( \frac{\mathrm{d}\xi}{\mathrm{d}\theta}\right)^2 }-1}{\frac{\mathrm{d}\xi}{\mathrm{d}\theta}}\e^{\frac{2}{g}\xi}\\
			&\ \ \ \ +(-m_1\cot\theta+m_2\tan\theta).
		\end{aligned}
	\end{equation}
	 By integrating \eqref{eq:1-28-1} one shows that $\theta_3$ gets arbitrarily close to $\frac{\pi}{2}$ if $\varepsilon$ is small enough.
	 We carry out this explicitly:
	 
	 Integrating \eqref{eq:1-28-1} from $\theta_3$ to $\theta_2$, the left hand side evaluates to $\frac{1}{2}\log\left(1+H(\theta_3)^2 \right)$,
	 which remains bounded as $\theta_2\to\frac{\pi}{2}$ unless $\theta_3$ also gets close to $\frac{\pi}{2}$. 
	 For the right hand side, one first note that there exists a $c_3>0$ so that \[-\frac{1}{H(\theta)}>c_3\left(\frac{\pi}{2}-\theta \right)\] 
	 for \(\theta\in(\theta^*,\frac{\pi}{2})\), and a \(M>0\) so that
	 \[\left|\e^{\frac{4}{g}\xi}-m-\lambda\sqrt{\frac{2}{g}}\e^{\frac{2}{g}\xi} \right| <M\]
	for \(\theta\in(\theta^*,\theta_2)\) 
	
	 Hence for $\theta\in\left(\theta_3,\theta_2 \right)$ one has 
	 \[
	 \begin{aligned}
	  \frac{\left|  \e^{\frac{4}{g}\xi}-m-\lambda\sqrt{\frac{2}{g}}\e^{\frac{2}{g}\xi}\right|}{\frac{\mathrm{d}\xi}{\mathrm{d}\theta}}<&\,\frac{M}{\frac{\rm d\xi}{\rm d\theta}(\theta_3)}=\frac{M}{\frac{-1}{H(\theta_3)}}<\frac{M}{c_3\left(\frac{\pi}{2}-\theta_3 \right)}
	 \end{aligned}
	 \]
	 and then the integral of the first term of the right hand side of \eqref{eq:1-28-1} from $\theta_3$ to $\theta_2$ is bounded.
	 The second term which is bounded yields also a bounded term.
	 The third term yields $-\log\frac{\frac{\pi}{2}-\theta_2}{\frac{\pi}{2}-\theta_3}+O(1)$ which is, crucially, unbounded unless $\theta_3\to\frac{\pi}{2}$ together with $\theta_2$.
	 So a situation where $\theta_2\to\frac{\pi}{2}$ but $\theta_3\not\to\frac{\pi}{2}$ is impossible.
	 
	In fact, this integral shows that  $q:=\frac{\frac{\pi}{2}-\theta_3}{\frac{\pi}{2}-\theta_2}$ grows unboundedly as $\theta_2$ approaches $\frac{\pi}{2}$.
	Finally if we integrate \eqref{eq:1-28-1} from $\theta\ge\theta_3$ to $\theta_2$ one gets
	 \[
	 -\log\left(\frac{\frac{\rm d\xi}{\rm d\theta}}{\sqrt{1+\left(\frac{\rm d\xi}{\rm d\theta} \right)^2}} \right)  =-\log\left(\frac{\frac{\pi}{2}-\theta_2}{\frac{\pi}{2}-\theta} \right)+O(1)
	 \]
	 which gives a constant $c_4>0$ so that \[\frac{\rm d\xi}{\rm d\theta}(\theta)\ge c_4\frac{\frac{\pi}{2}-\theta_2}{\frac{\pi}{2}-\theta}\] for $\theta\in[\theta_3,\theta_2]$.
	 Integrating this  from $\theta_3$ to $\theta_2$ gives \[\xi(\theta_2)-\xi(\theta_3)\ge c_4\left(\frac{\pi}{2}-\theta_2 \right)\log q,\]
	 then for \(\frac{\pi}{2}-\theta_2\) small enough one concludes from lemma \ref{lem:10-27-7} that 
	 \[
	 \xi(\theta_3)\le S_\lambda- \left( \frac{1}{2}\left( 2^{\frac{1}{2m_2}}-1\right)+c_4\log q\right) \left(\frac{\pi}{2}-\theta_2 \right). 
	 \]
	 Plugging this into \eqref{eq:10-25-2} then implies for any $\theta\in\left(\theta^*,\frac{\theta^*+\theta_2}{2} \right)$ that 
	 \[
	 \begin{aligned}
	 \frac{\rm d^2\xi}{\rm d\theta^2}\,\ge\, & m-\e^{\frac{4}{g}\xi(\theta_3)}-\lambda\sqrt{\frac{2}{g}}\,\e^{\frac{2\xi(\theta_3)}{g}} \\
	 \ge\,&m-\exp\left(\frac{4}{g} S_\lambda-\frac{4}{g}\left( \frac{1}{2}\left( 2^{\frac{1}{2m_2}}-1\right)+c_4\log q\right) \left(\frac{\pi}{2}-\theta_2 \right) \right) \\
	 &\ \ \ \ -\lambda\sqrt{\frac{2}{g}}\,\exp\left(\frac{2}{g} S_\lambda-\frac{2}{g}\left( \frac{1}{2}\left( 2^{\frac{1}{2m_2}}-1\right)+c_4\log q\right)\left(\frac{\pi}{2}-\theta_2 \right) \right)\\
	 \ge\,&\min\left\lbrace \frac{m}{2},c_5\left( \frac{1}{2}\left( 2^{\frac{1}{2m_2}}-1\right)+c_4\log q\right) \left(\frac{\pi}{2}-\theta_2 \right)\right\rbrace, 
	 \end{aligned}
	 \]
	 where \(c_5>0\) is a constant independent of \(\theta_2\).
	 By the unboundedness of $q$ as $\theta_2\to\frac{\pi}{2}$ and \eqref{eq:1-14-1}, one recovers $\frac{\rm d\xi}{\rm d\theta}(\theta)=0$ for some $\theta\in\left(\theta^*,\frac{\theta^*+\theta_2}{2} \right)$, provided $\frac{\pi}{2}-\theta_2$ is small enough.
	 
	 So also in the case $m_2=1$, we get a contradiction to the assumption that $S_\lambda+\varepsilon$ was type 1 for $\varepsilon$ small enough.
\end{proof}
%%%%%%%%%%%%%%%%%%%%%%%%%%%%%%%%%%%%%%%%%%%%%%%%%%%%%%%%%%%%%%%%%%%%%%%%%%%%%%%%%%%%%%%%%%%%%%%%%%%%%%%%%%%%%%%%%%%%%%%%%%%%%%%%%%%%%%%%%%%%%%%%%%%%%%%%%%%%%%%%%%%%%%%%%%%%%%%%%%
\subsection{Proof of Proposition \ref{prop:10-25-5}(iii)} 
\begin{lemma}\label{lem:10-27-9}
	Let $\lambda\ge 0$ and $\xi_0^*$ be of type 3, then:
	\begin{enumerate}[{\rm (i)}]
        \item  $\theta_{\xi_0}'(t)>0$ for all $t>0$.
		\item There is a $\delta>0$ and a $T>0$ so that $\e^{\frac{4}{g}\xi(t)} -m +\sqrt{\frac{2}{g}}\lambda\,\e^{\frac{2}{g}\xi(t)}<-\delta$ for all $t>T$.
		\item $\lim_{t\to \infty}\theta_{\xi_0^*}(t)=\frac{\pi}{2}$.
	\end{enumerate}
\end{lemma}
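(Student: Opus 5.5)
Write $\xi,\theta,\alpha$ for the components of the solution starting at $(\xi_0^*,\theta^*,\frac{\pi}{2})$. Since $\xi_0^*$ is type 3, we have $\xi'(t)\neq 0$ and $\theta(t)\neq\theta^*$ for all $t>0$. By Proposition \ref{prop:10-25-5}(ii), $\xi_0^*>S_\lambda$, so $\alpha'(0)>0$ by Lemma \ref{lem:10-25-6}(i). Hence $\alpha$ enters $(\frac{\pi}{2},\pi)$ immediately, so $\xi'<0$ and $\theta'>0$ for small $t>0$. Because $\xi'$ never vanishes, $\alpha(t)\in(\frac{\pi}{2},\frac{3\pi}{2})$ for all $t>0$, and $\theta>\theta^*$ for all $t>0$.

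\emph{Part (i).} If $\theta'$ vanished, there would be a first time $t_1$ with $\alpha(t_1)=\pi$ and $\alpha'(t_1)\ge 0$. Since $\theta(t_1)>\theta^*$, Lemma \ref{lem:10-25-6}(iv) gives $\alpha'(t_1)>0$, and the same lemma then prevents $\alpha$ from returning to $\pi$. So $\alpha\in(\pi,\frac{3\pi}{2})$ for all $t>t_1$, which means $\theta'<0$ and $\xi'<0$ with $\theta(t_1)>\theta^*$. Lemma \ref{lem:10-26-2} (second part) then yields a time at which $\theta=\theta^*$, contradicting type 3. Hence $\alpha\in(\frac{\pi}{2},\pi)$ and $\theta'>0$ for all $t>0$.

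\emph{Part (ii).} From (i), $\xi$ is strictly decreasing while $\theta'>0$. Lemma \ref{lem:10-26-1}(i) applies at any $t_0>0$ and gives a time $T_0$ with $\xi(T_0)=S_\lambda$. Then $\xi(t)<\xi(T_0+1)<S_\lambda$ for $t>T_0+1$. The function $x\mapsto \e^{\frac{4}{g}x}-m+\sqrt{\frac{2}{g}}\lambda\e^{\frac{2}{g}x}$ is increasing and vanishes exactly at $x=S_\lambda$: this is the defining property of $S_\lambda$, since with $r=\sqrt{g/2}\,\e^{\frac{2}{g}x}$ it reads $\frac{2}{g}(r^2+\lambda r-n)$. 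So setting $\delta$ equal to minus its value at $\xi(T_0+1)$ and $T=T_0+1$ proves (ii).

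\emph{Part (iii).} $\theta$ is increasing and bounded by $\frac{\pi}{2}$, so it converges to some $\theta_\infty\in(\theta^*,\frac{\pi}{2}]$. Suppose $\theta_\infty<\frac{\pi}{2}$, so that $\sin 2\theta$ is bounded below. I would then show $\xi\to-\infty$. If instead $\xi$ had a finite limit, then $\alpha'$, and hence $\xi''$ and $\theta''$, would be bounded, forcing $\xi'\to0$ and $\theta'\to0$. That requires $\sin2\theta\to0$, which is impossible, exactly as in the proof of Lemma \ref{lem:10-26-1}.

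So $\xi\to-\infty$. Then in the $\alpha'$ equation the term $\sin2\theta\sin\alpha(\e^{\frac{4}{g}\xi}-m)$ is about $-m\sin 2\theta_\infty\sin\alpha$, the $\lambda$ term tends to $0$, and the term $2\cos\alpha(m_1\cos^2\theta-m_2\sin^2\theta)$ is nonnegative because $\cos\alpha<0$ and $\theta>\theta^*$. The aim is to conclude that $\alpha$ is driven toward $\pi$ and that $\theta'=\sin2\theta\sin\alpha\to0$, i.e.\ $\alpha\to\pi$. This requires $\alpha'\to0$, which looks problematic: at $\alpha$ near $\pi$ the term $-m\sin2\theta\sin\alpha$ is small, while the $\cos\alpha$ term is about $-2(m_1\cos^2\theta_\infty-m_2\sin^2\theta_\infty)$. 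That quantity is strictly positive because $\theta_\infty>\theta^*$. So near $\alpha=\pi$ we get $\alpha'>c>0$, $\alpha$ crosses $\pi$, and this contradicts (i). Hence $\theta_\infty=\frac{\pi}{2}$.

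The main obstacle is making this last contradiction rigorous. $\alpha$ stays in $(\frac{\pi}{2},\pi)$, and $\theta'\to0$ forces $\alpha\to\pi$ (since $\xi'=\sin 2\theta\cos\alpha$ need not vanish, only $\int\theta'<\infty$ is available). I would first argue that $\alpha$ cannot stay bounded away from $\pi$ on long intervals, because then $\theta'$ would be bounded below and $\theta$ would not converge. Then, at times when $\alpha$ is close to $\pi$, the positive lower bound $\alpha'>c$ pushes $\alpha$ past $\pi$, contradicting (i).
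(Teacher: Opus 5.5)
Your proposal is correct and follows the paper's proof. Part (i) uses Lemma \ref{lem:10-25-6}(iv) and Lemma \ref{lem:10-26-2}, part (ii) uses Lemma \ref{lem:10-26-1}(i), and part (iii) argues by contradiction from the strictly positive $\cos\alpha$-term in $\alpha'$ near $\alpha=\pi$ when $\theta>\theta^*$. The only differences are in (iii): your step $\xi\to-\infty$ is unnecessary, since $\xi$ being bounded above together with $\lambda\ge 0$ already gives $\alpha'\ge c>0$ for $\alpha$ near $\pi$. Also, your argument that $\alpha$ must come within $\eta$ of $\pi$ and is then trapped and forced across $\pi$ replaces the paper's bare assertion that bounded derivatives force $\theta'\to 0$, and hence $\alpha\to\pi$.
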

\begin{proof} 
	Let us first note that $\xi_{\xi_0^*}'(t)<0$ and $\theta_{\xi_0^*}'(t)>0$ for small $t>0$ since $\xi_0^*>S_\lambda$.
	Since $\xi_0^*$ is of type 3, then $\xi_{\xi_0^*}'(t)<0$ and $\theta_{\xi_0^*}(t)>\theta^*$ for all $t>0$.
	If $\theta_{\xi_0^*}'(t_0)=0$ for some $t_0>0$ then by lemma \ref{lem:10-25-6}(iv) one can show that $\theta_{\xi_0^*}'(t)<0$ for all $t>t_0$ by contradiction argument.
	Utilizing lemma \ref{lem:10-26-2} we get a $t_1>t_0$ so that $\theta_{\xi_0^*}(t_1)=\theta^*$ which is impossible.
	Hence part (i) follows.
	
	From part (i) we know that $\theta_{\xi_0^*}'(t)>0$ and $\xi_{\xi_0^*}'(t)<0$ for all $t<0$, 
	then by lemma \ref{lem:10-26-1}(i) $\xi_{\xi_0^*}(t)$ must reach $S_\lambda$ in finite time. 
	Hence part (ii) follows.
	
	Since $\theta_{\xi_0^*}$ is monotonous in $t$ and bounded by $\frac{\pi}{2}$,
	whence it converges. 
	Since $\xi_0^*$ is bounded above, one has that all the derivatives of the parameters are bounded and hence $\theta_{\xi_0^*}'(t)=\sin\alpha\sin2\theta\to0 $.
	If $\lim_{t\to+\infty}\theta_{\xi_0^*}(t)<\frac{\pi}{2}$ then $\lim_{t\to+\infty}\sin\alpha_{\xi_0^*}(t)=0$.
	By \eqref{eq:10-25-0}, this gives the existence of  $\varepsilon>0$ so that $\alpha'(t)>\varepsilon$ for large $t$ which is impossible.
	Hence part (iii) follows.
\end{proof}
\begin{lemma}\label{lem:10-27-10}
	If $\lambda\ge 0$, then $\xi_0^*$ is not type 3.
\end{lemma}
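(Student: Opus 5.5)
Suppose, for contradiction, that $\xi_0^*$ is of type 3. Write $(\xi,\theta,\alpha)$ for its solution. By lemma \ref{lem:10-27-9}, $\theta'>0$ and $\xi'<0$ for all $t>0$, and $\theta(t)\to\frac{\pi}{2}$. Moreover $\xi(t)<S_\lambda-\delta'$ for some $\delta'>0$ and all large $t$. The strategy is to show that the trajectory is then forced to converge, as a graph over $\theta$, to a solution of \eqref{eq:10-25-1} that is regular up to $\theta=\frac{\pi}{2}$. This is impossible by a sign argument at $\theta=\frac{\pi}{2}$, exactly as in the proof of lemma \ref{lem:10-26-1}(i).

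\emph{Step 1: describe the tail of the trajectory.} Since $\xi$ is monotone decreasing, either $\xi$ converges or $\xi\to-\infty$. In both cases the derivatives of $\xi,\theta$ remain bounded, because $\xi$ is bounded above. Since $\theta'(t)>0$ for all $t>0$, the tail $t\in(t_0,\infty)$ is a graph $\xi(\theta)$ over $(\theta(t_0),\frac{\pi}{2})$ satisfying \eqref{eq:10-25-1}, with $\frac{\mathrm d\xi}{\mathrm d\theta}<0$. Rewrite \eqref{eq:10-25-1} in the form \eqref{eq:1-12-2}. Near $\theta=\frac{\pi}{2}$ the term $m_2\tan\theta$ dominates, while lemma \ref{lem:10-27-9}(ii) makes the term $-\bigl(\e^{\frac{4}{g}\xi}-m+\lambda\sqrt{2/g}\,\e^{\frac{2}{g}\xi}\bigr)$ positive, namely at least $\delta$.

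\emph{Step 2: show the slope is positive near $\frac{\pi}{2}$.} Look at $p=\frac{\mathrm d\xi}{\mathrm d\theta}<0$ near $\theta=\frac{\pi}{2}$. From \eqref{eq:10-25-1}, with $p<0$, the combination $(m_1\cot\theta-m_2\tan\theta)p$ is large and positive, roughly $m_2|p|/(\frac{\pi}{2}-\theta)$. The $\delta$-term pushes $p''$ in the positive direction. One then compares with the model equation $p'\approx(1+p^2)\bigl(\delta+\frac{m_2 p}{\frac{\pi}{2}-\theta}\bigr)$. Solutions of this model with $p<0$ can reach $\theta=\frac{\pi}{2}$ only if $p\to0$, and then $p$ becomes of order $+\frac{\delta}{m_2+1}(\frac{\pi}{2}-\theta)>0$. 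Concretely, I would set $u=p/(\frac{\pi}{2}-\theta)$ and derive a first-order inequality for $u$. It should show that if $p$ stays negative all the way to $\frac{\pi}{2}$, then $u$ blows up negatively. Integrating, $\xi$ would then tend to $-\infty$ fast enough to contradict the boundedness of $\alpha$ (equivalently of $\theta''$), or contradict $\alpha\to\frac{\pi}{2}$, which follows from the $\alpha'$ equation as in lemma \ref{lem:10-26-1}. Otherwise $p$ must vanish at some $\theta<\frac{\pi}{2}$, i.e.\ $\xi'(t)=0$ for some finite $t$. That contradicts type 3.

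\emph{Step 3 (alternative if Step 2 is messy).} Argue as in lemma \ref{lem:10-26-1}. Once $\xi$ converges and $\alpha\to\frac{\pi}{2}$, we have $p\to0$ at $\frac{\pi}{2}$. Then theorem 1.1 of \cite{Liang} gives a $C^2$ solution up to $\frac{\pi}{2}$, with $\frac{\mathrm d^2\xi}{\mathrm d\theta^2}(\frac{\pi}{2})=-\frac{1}{1+m_2}\bigl(\e^{\frac{4}{g}\xi}-m+\lambda\sqrt{2/g}\,\e^{\frac{2}{g}\xi}\bigr)\ge\frac{\delta}{1+m_2}>0$. Since $p(\frac{\pi}{2})=0$, this forces $p>0$ just to the left of $\frac{\pi}{2}$, contradicting $\xi'<0$. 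The remaining case is $\xi\to-\infty$. There, $\e^{\frac{4}{g}\xi}\to0$ and the equation becomes a linear-like ODE whose solutions with $p<0$ cannot reach $\frac{\pi}{2}$ with $\xi\to-\infty$ while $\theta$ stays below $\frac{\pi}{2}$ with bounded curvature. I would exclude it via the $u$-inequality of Step 2.

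The main obstacle is ruling out $\xi\to-\infty$, i.e.\ establishing that $\xi$ actually converges so that the regularity result at $\theta=\frac{\pi}{2}$ applies. I would first try to use the $\alpha'$ equation: with $\theta\to\frac{\pi}{2}$ and $\sin\alpha>0$, the term $-2m_2\sin^2\theta\cos\alpha>0$ drives $\alpha$ toward $\frac{\pi}{2}$. This makes $|\xi'|/\theta'$ small, which yields $\xi$ bounded below.
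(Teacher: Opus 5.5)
There is a genuine gap. The contradiction you are aiming for does not exist: it rests on a sign error that occurs in both Step 2 and Step 3.

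In Step 3 you have $p(\frac{\pi}{2})=0$ and $\frac{\mathrm d^2\xi}{\mathrm d\theta^2}(\frac{\pi}{2})\ge\frac{\delta}{1+m_2}>0$. These give $p(\theta)\approx\frac{\delta}{1+m_2}(\theta-\frac{\pi}{2})<0$ for $\theta$ slightly below $\frac{\pi}{2}$, which is exactly what $\xi'<0$, $\theta'>0$ require. In lemma \ref{lem:10-26-1}(i) the contradiction came from $\xi(\frac{\pi}{2})>S_\lambda$, which makes that second derivative negative. Lemma \ref{lem:10-27-9}(ii) puts you on the other side of $S_\lambda$, so that argument cannot be transplanted.

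The same error occurs in your model equation. The solutions of $\frac{\mathrm dp}{\mathrm d\theta}=\delta+\frac{m_2p}{\frac{\pi}{2}-\theta}$ are $p=-\frac{\delta}{1+m_2}(\frac{\pi}{2}-\theta)+C(\frac{\pi}{2}-\theta)^{-m_2}$. The regular one ($C=0$) is negative, and $u=p/(\frac{\pi}{2}-\theta)\to-\frac{\delta}{1+m_2}$ stays bounded. The case $C>0$ produces a zero of $p$, i.e.\ of $\xi'$, and $C<0$ produces $p\to-\infty$, i.e.\ a zero of $\theta'$.

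So a type-3 tail is precisely the codimension-one regular solution meeting $\theta=\frac{\pi}{2}$ orthogonally with limit $\xi_\infty<S_\lambda$. Geometrically, this is the hypersurface closing up smoothly at a focal submanifold. Nothing in lemma \ref{lem:10-27-9} alone excludes it. Ruling out $\xi\to-\infty$, which you identify as the main obstacle, would not help: the convergent case is where your argument actually fails.

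The missing idea is to use the defining property of $\xi_0^*$ rather than the $\xi_0^*$-trajectory itself. For $\varepsilon>0$, $\xi_0^*+\varepsilon$ is type 1. By continuous dependence these trajectories shadow the type-3 one for long times, so their turning point satisfies $\theta_\varepsilon(T_1)\to\frac{\pi}{2}$, and $\xi_\varepsilon(T_1)$ lies in the region of lemma \ref{lem:10-27-9}(ii).

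After the turn, the lower graph solves \eqref{eq:10-25-2} with $\frac{\mathrm d\xi_\varepsilon}{\mathrm d\theta}>0$ and infinite slope at $\theta_\varepsilon(T_1)$. Integrate the logarithmic form of \eqref{eq:10-25-2} from $x=\frac{1}{2}(\theta^*+\frac{\pi}{2})$ to $\theta_\varepsilon(T_1)$; the other terms have a favourable sign because $\lambda\ge0$ and $\e^{\frac{4}{g}\xi}-m<0$. This shows that $\frac{\mathrm d\xi_\varepsilon}{\mathrm d\theta}(x)$ is at most a constant times $(\frac{\pi}{2}-\theta_\varepsilon(T_1))^{m_2}$, so it tends to $0$.

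On the other hand, \eqref{eq:10-25-2} gives $\frac{\mathrm d^2\xi_\varepsilon}{\mathrm d\theta^2}>\delta$ on $(\theta^*,x)$. Hence the slope vanishes before $\theta$ returns to $\theta^*$, i.e.\ $\xi_\varepsilon'=0$ occurs first. This contradicts $\xi_0^*+\varepsilon$ being type 1.
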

\begin{proof}
	Let $(\xi_{\varepsilon}(t),\theta_{\varepsilon}(t))$ denote the solution to \eqref{eq:10-25-0} with initial condition $(\xi,\theta,\alpha )(t=0)=(\xi_0^*+\varepsilon,\theta^*,\frac{\pi}{2})$ where $\varepsilon>0$.
	Note that $\xi_0^*+\varepsilon$ is type 1 by definition of $\xi_0^*$, in particular there is a $T_1(\varepsilon)$ so that $\theta_\varepsilon(t)$ has a local maximum and $\xi_\varepsilon'(t)<0$ for all $t\in(0,T_1]$.

	Assuming that $\xi_0^*$ is of type 3 and using $\xi_{\varepsilon}$ and $\theta_{\varepsilon}$ converge uniformly on compacta to $\xi_{\varepsilon=0}$ and $\theta_{\varepsilon=0}$ as $\varepsilon\to0 $ one finds that $\theta_{\varepsilon}(T_1)\to \frac{\pi}{2}$ as $\varepsilon\to0 $.
	This also implies $T_1(\varepsilon)\to \infty$ as $\varepsilon\to 0$,
	giving for $\varepsilon$ small enough that one has $\xi_{\varepsilon}(T_1)<S_\lambda-\delta$.
	
	After the extremum at $\theta_{\varepsilon}(T_1)$, one has that $\xi_{\varepsilon}$ becomes a lower graph over $\theta$.
	The graph ODE \eqref{eq:10-25-2} for $\xi $ yields: 
	\[
	\frac{\frac{\rm d^2\xi_{\varepsilon}}{\rm d\theta^2}}{\frac{\rm d\xi_{\varepsilon}}{\rm d\theta}\left(1+\left(\frac{\rm d\xi_{\varepsilon}}{\rm d\theta} \right)^2 \right)}=-\frac{\e^{\frac{4\xi_{\varepsilon}}{g}}-m}{\frac{\rm d\xi_{\varepsilon}}{\rm d\theta}}-(m_1\cot\theta-m_2\tan\theta)+
	\frac{\lambda\sqrt{\frac{2}{g}}\sqrt{1+\left(\frac{\rm d\xi_{\varepsilon}}{\rm d\theta} \right)^2}\e^{\frac{2\xi_{\varepsilon}}{g}}}{\frac{\rm d\xi_{\varepsilon}}{\rm d\theta}}.
	\]
	Integrating this from $x:=\frac{\theta^*+\frac{\pi}{2}}{2}$ to $\theta_\varepsilon(T_1)$ one gets \[-\log\frac{\frac{\rm d\xi_{\varepsilon}}{\rm d\theta}}{\sqrt{1+\left(\frac{\rm d\xi_{\varepsilon}}{\rm d\theta} \right)^2}}\left(x\right)\ge -m_2\log\left(\frac{\pi}{2}-\theta_{\varepsilon}(T_1) \right) +O(1)\]
	where $\lambda\ge0 $, $\frac{\rm d\xi_{\varepsilon}}{\rm d\theta  }>0$, $\e^{\frac{4\xi_{\varepsilon}}{g}}-m\le 0$ and $\frac{\rm d\xi_{\varepsilon}}{\rm d\theta}(\theta(T_1))=+\infty$ were used.
	
	As $\varepsilon\to 0$ this implies that $\frac{\rm d\xi_{\varepsilon}}{\rm d\theta}\left(x \right)$ becomes arbitrarily small, 
	in particular we may assume it to be smaller that $\frac{\delta(x-\theta^*)}{2}$.
	Remarking, however by \eqref{eq:10-25-2} and lemma \ref{lem:10-27-9} one gets \[\frac{\rm d^2\xi_\varepsilon}{\rm d\theta^2}(\theta)>\delta\] for all $\theta\in(\theta^*,x)$.
	With $\frac{\rm d\xi_\varepsilon}{\rm d\theta}(x)<\frac{\delta(x-\theta^*)}{2}$, one immediately gets $\frac{\rm d\xi_\varepsilon}{\rm d\theta}\left(\frac{\theta^*+x}{2} \right)<0$, contradicting that $\xi_0^*+\varepsilon$ is type 1. 
\end{proof}
%%%%%%%%%%%%%%%%%%%%%%%%%%%%%%%%%%%%%%%%%%%%%%%%%%%%%%%%%%%%%%%%%%%%%%%%%%%%%%%%%%%%%%%%%%%%%%%%%%%%%%%%%%%%%%%%%%%%%%%%%%%%%%%%%%%%%%%%%%%%%%%%%%%%%%%%%%%%%%%%%%%%%%%%%%%%%%%%%%

%%%%%%%%%%%%%%%%%%%%%%%%%%%%%%%%%%%%%%%%%%%%%%%%%%%%%%%%%%%%%%%%%%%%%%%%%%%%%%%%%%%%%%%%%%%%%%%%%%%%%%%%%%%%%%%%%%%%%%%%%%%%%%%%%%%%%%%%%%%%%%%%%%%%%%%%%%%%%%%%%%%%%%%%%%%%%%%%%%


\begin{thebibliography}{1}
		
		\bibitem{Ab} U. Abresch, \emph{Isoparametric hypersurfaces with four or six distinct principal curvatures. Necessary conditions on the multiplicities}, Math. Ann. {\bf 264} (1983), no.~3, 283--302; MR0714104
		\bibitem{A} S. B. Angenent, \emph{Shrinking doughnuts}, Birkh\"auser, Boston-Basel-Berlin,  {\bf 7}, 21-38, 1992.
		\bibitem{B} S. Brendle,  \emph{Embedded self-similar shrinkers of genus 0}, Ann. of Math. {\bf 183} (2016), 715-728.
		\bibitem{Ca} \'E. Cartan, \emph{Sur des familles remarquables d'hypersurfaces isoparam\'etriques dans les espaces sph\'eriques}, Math. Z. {\bf 45} (1939), 335--367; MR0000169
		\bibitem{Ce} T.~E. Cecil and P.~J. Ryan, {\it Geometry of hypersurfaces}, Springer Monographs in Mathematics, Springer, New York, 2015; MR3408101
		\bibitem{C} J. -E. Chang,  \emph{1-dimensional solutions of the $\lambda$-self shrinkers}, Geom. Dedicata  {\bf 189} (2017), 97-112.
		\bibitem{CM} T. H. Colding and W. P. Minicozzi II,  \emph{Generic mean curvature flow I; generic singularities}, Ann. of Math. {\bf 175} (2012), 755-833.
		\bibitem{CS} A. C. -P. Chu and A. Sun, \emph{Genus one singularities in the mean curvature flow}, arXiv: 2308.05923.
		\bibitem{CW}  Q. -M. Cheng and G. Wei,  \emph {Complete $\lambda$-hypersurfaces of weighted volume-preserving mean curvature flow},
		Cal. Var. Partial Differential Equations {\bf57} (2018), 32.
		\bibitem{CW1} Q. -M. Cheng and G. Wei, \emph{Examples of compact $\lambda$-hypersurfaces in Euclidean spaces},
		Sci. China Math.  {\bf64} (2021), 155-166.
		\bibitem{CLW} Q. -M. Cheng, J. Lai and G. Wei, \emph{Examples of compact embedded convex $\lambda$-hypersurfaces}, J. Funct. Anal. {\bf286} (2024), no. 2, Paper No. 110211.
        \bibitem{LCW} Q. -M. Cheng, J. Lai and G. Wei, \emph{Embedded cylindrical and doughnut-shaped $\lambda$-hypersurfaces}, arXiv:2406.11123.
		\bibitem{DD} M. do Carmo and M. Dajczer, \emph{Hypersurfaces in space of constant curvature},
		Trans. Amer. Math. Soc. {\bf 277}  (1983),  685-709.
		\bibitem{D} G. Drugan, \emph{An immersed $S^2$ self-shrinker}. Trans. Amer. Math. Soc. {\bf367} (2015), 3139-3159.
		\bibitem{DK} G. Drugan and S. J. Kleene, \emph{Immersed self-shrinkers}, Trans. Amer. Math. Soc. {\bf 369} (2017), 7213-7250.
		\bibitem{GH} K. Grove and S. Halperin, \emph{Dupin hypersurfaces, group actions and the double mapping cylinder}, J. Differential Geom. {\bf 26} (1987), no.~3, 429--459; MR0910016
		\bibitem{G} Q. Guang, \emph{A note on mean convex $\lambda$-surfaces in $\R^3$}, Proc. Amer. Math. Soc. {\bf 149} (2021), 1259-1266.

        \bibitem{H} G. Huisken, \emph{Asymptotic behavior for singularities of the mean curvature flow}, J. Differential Geom. {\bf 31} (1990), 285-299.


		\bibitem{Hei} S. Heilman, \emph{Symmetric convex sets with minimal Gaussian surface area}, Amer. J. Math. {\bf 143} (2021), 53-94.	

        \bibitem{I} T. Ilmanen, \emph{Problems in mean curvature flow}, available at \nolinkurl{http://people.math.ethz.ch/\textasciitilde ilmanen/classes/eil03/problems03.pdf}.
        

		\bibitem{KM} S. Kleene and N. M. M\o ller, \emph{Self-shrinkers with a rotational symmetry}, Trans. Amer. Math. Soc. {\bf 366} (2014), no. 8, 3943-3963.
		\bibitem{L} T. -K. Lee,  \emph{Convexity of $\lambda$-hypersurfaces}, Proc. Amer. Math. Soc. {\bf 150}  (2022), 1735-1744.
		
		\bibitem{Liang} J. Liang, \emph{A singular initial value problem and self-similar solutions of nonlinear dissipative wave equation}, J. Differential Equations {\bf 246}(2009), no.2, 819-844.

		\bibitem{LW} Z. Li and G. Wei, \emph{An immersed $S^n$ $\lambda$-hypersurface}, J. Geom. Anal. {\bf 33} (2023), Paper No. 288, 29 pp.
		\bibitem{M} P. McGrath, \emph{Closed mean curvature self-shrinking surfaces of generalized rotational type}, arXiv: 1507.00681.
		\bibitem{MR}  M. McGonagle and J. Ross, \emph{The hyperplane is the only stable, smooth solution to the isoperimetric problem in Gaussian space}, Geom. Dedicata {\bf178} (2015), 277-296.
		\bibitem{Mu1} H.~F. M\"unzner, \emph{Isoparametrische Hyperfl\"achen in Sph\"aren}, Math. Ann. {\bf 251} (1980), no.~1, 57--71; MR0583825
		\bibitem{Mu2} H.~F. M\"unzner, \emph{Isoparametrische Hyperfl\"achen in Sph\"aren. II. \"Uber die Zerlegung der Sph\"are in Ballb\"undel}, Math. Ann. {\bf 256} (1981), no.~2, 215--232; MR0620709
		\bibitem{R} J. Ross, \emph{On the existence of a closed, embedded, rotational $\lambda$-hypersurface}, J. Geom.  {\bf110} (2019), 1-12.
		\bibitem{R1} O. Riedler, \emph{Closed embedded self-shrinkers of mean curvature flow}, J. Geom. Anal. {\bf33} (2023), 172.
	    \bibitem{S} A. Sun, \emph{Compactness and rigidity of $\lambda$-surfaces}, Int. Math. Res. Not. IMRN  (2021),  11818-11844.
		\noindent
		
		
		
	\end{thebibliography}
\end{document}